\theoremstyle{definition}
\newtheorem{theorem}{Theorem}
\newtheorem*{theorem*}{Theorem}
\numberwithin{theorem}{section}
\newtheorem{proposition}[theorem]{Proposition}
\newtheorem{lemma}[theorem]{Lemma}
\newtheorem{remark}[theorem]{Remark}
\newtheorem{example}[theorem]{Example}
\newtheorem{cor}[theorem]{Corollary}
\newenvironment{customthm}[1]
  {\innercustomthm}
  {\endinnercustomthm}
\DeclareMathOperator{\grad}{grad}
\DeclareMathOperator{\spann}{span}
\DeclareMathOperator{\initial}{in}
\DeclareMathOperator{\Hom}{Hom}
\newcommand{\mult}{\mathrm{mult}}
\newcommand{\la}{\lambda}
\newcommand{\mL}{\mathcal L}
\newcommand{\mM}{\mathcal M}
\newcommand{\mP}{\mathcal P}
\newcommand{\bC}{\mathbb{C}}
\newcommand{\bP}{\mathbb{P}}
\newcommand{\bR}{\mathbb{R}}
\newcommand{\bZ}{\mathbb{Z}}
\DeclareMathSymbol{\lsb@l}{\mathalpha}{letters}{`l}
\title{Semitoric degenerations of Hibi varieties and flag varieties}
\author{Evgeny Feigin}
\address{E. Feigin:\newline
National Research University Higher School of Economics\\
Faculty of Mathematics\\
Ulitsa Usacheva 6\\Moscow 119048\\Russia\newline
{\it and }\newline
Skolkovo Institute of Science and Technology\\ 
Center for Advanced Studies\\
Bolshoy Boulevard 30, bld. 1\\
Moscow 121205\\
Russia
}
\email{evgfeig@gmail.com}
\author{Igor Makhlin}
\address{I. Makhlin:\newline
Skolkovo Institute of Science and Technology\\ 
Center for Advanced Studies\\
Bolshoy Boulevard 30, bld. 1\\
Moscow 121205\\
Russia\newline
{\it and }\newline
National Research University Higher School of Economics\\
Faculty of Mathematics\\
Ulitsa Usacheva 6\\Moscow 119048\\Russia}
\email{imakhlin@mail.ru}
\dedicatory{Dedicated to the memory of our teacher and colleague Ernest Borisovich Vinberg.}
\thanks{Both authors were partially supported by the grant RSF 19-11-00056. 
I. Makhlin was supported in part by the Young Russian Mathematics award.}
\begin{document}

\begin{center}
\textcolor{red}{\Large After this paper was written the authors found the work~\cite{Zh} which proves a general theorem relating degenerations of toric varieties to regular subdivisions of polytopes. This implies some of the results found below. Subsequently, we decided to rewrite this paper in a way that would make use of Zhu's theorem to substantially generalize our construction: see~\cite{FM}.}
\vspace{1cm}
\end{center}

\maketitle

\begin{abstract} 
We first construct a family of flat semitoric degenerations for the Hibi variety of every finite distributive lattice. The irreducible components of each degeneration are the toric varieties associated with polytopes forming a regular subdivision of the order polytope of the underlying poset. These components are themselves Hibi varieties. For each degeneration in our family we also define the corresponding weight polytope and embed the degeneration into the associated toric variety as the union of orbit closures given by a set of faces. Every such weight polytope projects onto the order polytope with the chosen faces projecting into the parts of the corresponding regular subdivision. We then apply these constructions to obtain a family of flat semitoric degenerations for every type A Grassmannian and complete flag variety.
\end{abstract}


\section*{Introduction}

The construction and study of flat degenerations of Lie theoretic varieties such as flag and Schubert varieties has been a popular subject for the last three decades. Historical overviews of the results obtained in this field can be found in~\cite{Kn} and~\cite{FaFL}. The latter text considers a particularly popular subgenre where the flat degeneration is a toric variety. This allows one to apply the powerful machinery of toric geometry to the study of Lie theory. A natural development of this idea is to consider \emph{semitoric} degenerations: those with toric irreducible components. Papers obtaining degenerations of this type include~\cite{Ch,KM,MG}.

This paper constructs a new family of flat semitoric degenerations of type A flag varieties. More specifically, we construct a family of semitoric varieties for every Hibi variety and then apply this construction to Grassmannians and complete flag varieties. The Hibi variety $H$ (also referred to as the ``Hibi toric variety'') is a projective toric variety associated with every finite distributive lattice $\mL$. It is the subvariety in $\bP(\bC^\mL)$ cut out by the Hibi ideal $I^h$ in the ring $R=\bC[\{X_a,a\in\mL\}]$ generated by the expressions \[X_aX_b-X_{a\land b}X_{a\lor b}.\] These varieties owe their name to the paper~\cite{H} and were studied in works of various authors such as \cite{GL,BL,FL}.

It is not hard to see (\cite{BL,FL}) that $H$ has a flat degeneration cut out by the ideal $I^m\subset R$ generated by products $X_aX_b$ for incomparable $a$, $b$. In fact, this is a Gr\"obner degeneration of $H$, meaning that the ideal $I^m$ is an initial ideal of $I^h$. One may consider the intermediate degenerations: those cut out by ideals which are initial ideals of $I^h$ and have $I^m$ as an initial ideal. This is precisely the family of degenerations studied in this paper. Note that since $I^m$ is a monomial ideal, its zero set is a union of projective spaces which can be viewed as a simple example of a semitoric variety. We show that all the intermediate degenerations are also semitoric and describe them explicitly in two different ways.

Before presenting our first description let us recall several key notions which are covered in detail in Section~\ref{preliminaries}. Every initial ideal of $I^h$ can be obtained by choosing a point $w\in\bR^\mL$, defining a grading on $R$ by $\grad_w(X_a)=w_a$ and considering the initial ideal $\initial_{\grad_w} I^h$. The set of $w\in\bR^\mL$ with a given initial ideal $\initial_{\grad_w} I^h$ is a polyhedral cone and together these cones compose the Gr\"obner fan of $I$. The monomial ideal $I^m$ corresponds to a maximal cone $K$ in the Gr\"obner fan, this maximal cone is described explicitly in~\cite{M}. The faces of $K$ enumerate the intermediate degenerations studied here. We denote by $I^F$ the initial ideal corresponding to a face $F\subset K$ and by $Y^F\subset\bP(\bC^\mL)$ the zero set of $I^F$.

Another important attribute of the lattice $\mL$ is its poset of join-irreducible elements $(P,<)$. The Hibi variety $H$ is known to be the toric variety associated with the order polytope $O(P,<)\subset\bR^P$ of this poset (\cite{stan}). The vertices of $O(P,<)$ are the indicator functions of order ideals in $(P,<)$ and are, therefore, enumerated by $\mL$. This means that every $w\in\bR^\mL$ defines a regular polyhedral subdivision $\Theta(w)$ (\cite{GKZ}) of $O(P,<)$. The main result of Section~\ref{semitoric} is as follows.
\begin{customthm}{A}[cf. Theorem~\ref{main}]\label{thma}
Choose a face $F$ of $K$ and a point $w$ in the relative interior of $F$. The regular subdivision $\Theta(w)$ is independent of the choice of $w$ and every part in the subdivision is an order polytope of some order relation on $P$. The irreducible components of the degeneration $Y^F$ are the toric varieties associated with the parts of the subdivision $\Theta(w)$ (and are, therefore, themselves Hibi varieties). 
\end{customthm}
Moreover, this correspondence between points in $K$ and regular subdivisions identifies $K$ with a maximal cone in the secondary fan of $O(P,<)$, see Corollary~\ref{maxcone}. In the extreme case of $F=K$ the regular subdivision is a triangulation of $O(P,<)$ into simplices indexed by the maximal chains in $\mL$. We note that this triangulation is discussed in the paper~\cite{FL}.

In Section~\ref{weightpolytopes} we give the second description, to do so we introduce the notion of weight polytopes. For every face $F$ of $K$ and every $a\in\mL$ the coordinate in $\bR^\mL$ corresponding to $a$ can be viewed as a functional $\lambda^F_a$ on $\spann(F)$. The weight polytope $\Pi^F\subset\spann(F)^*$ is defined as the convex hull of the $\la^F_a$. This polytope arises naturally as the weight diagram of the linear component of $R/I^F$ under the action of a certain torus. The polytope $\Pi^K$ corresponding to the maximal face is a $|\mL|-1$ dimensional simplex while the polytope $\Pi^{F_0}$ corresponding to the apex $F_0\subset K$ is identified with the order polytope $O(P,<)$. Together the polytopes $\Pi^F$ form a projective system indexed by the face lattice of $K$: one has a linear surjection $\pi^G_F:\Pi^G\to\Pi^F$ for every pair of faces $F\subset G$. 
\begin{customthm}{B}[cf. Theorem~\ref{main2}]\label{thmb}
Choose a face $F$ of $K$. The polytope $\Pi^F$ has a distinguished set of $|P|$-dimensional faces $\Phi_1^F,\dots,\Phi_{m(F)}^F$ which are mapped bijectively by $\pi^F_{F_0}$ into $\Pi^{F_0}=O(P,<)$. The images $\pi^F_{F_0}(\Phi_1^F),\dots,\pi^F_{F_0}(\Phi_{m(F)}^F)$ are precisely the parts in the subdivision considered in Theorem~\ref{thma}. The degeneration $Y^F$ is embedded into the toric variety associated with $\Pi^F$ as the union of toric subvarieties corresponding to the faces $\Phi_1^F,\dots,\Phi_{m(F)}^F$.
\end{customthm}

In particular, Theorem~\ref{thmb} shows that $Y^F$ is not simply a union of toric varieties but is a union of torus orbit closures inside a larger toric variety. This is a property shared by other semitoric degenerations considered in the literature such as those found in~\cite{Ch,KM,MG}.   

In the final Section~\ref{flags} we apply the above constructions to the study of flag varieties. First Theorems~\ref{thma} and~\ref{thmb} are applied directly to obtain a family of flat semitoric degenerations for every type A Grassmannian $\mathrm{Gr}_k(n)$. This is done by specializing to a certain lattice $\mL$ indexed by $k$-subsets in $\{1,\dots,n\}$. In this case the varieties cut out by the ideals $I^h$ and $I^m$ are well known to be flat degenerations of $\mathrm{Gr}_k(n)$: the case of $I^h$ is due to~\cite{GL} while the case of $I^m$ can be traced to the work of W. V. D. Hodge. This means that all the intermediate degenerations are also flat degenerations of $\mathrm{Gr}_k(n)$ with their structure described by Theorems~\ref{thma} and~\ref{thmb}.

The case of the variety $\mathcal F_n$ of complete flags requires somewhat more care. Here one considers a specific lattice $\mL$ indexed by nonempty proper subsets of $\{1,\dots,n\}$. The paper \cite{GL} shows that the corresponding Hibi ideal $I^h$ also defines a flat degeneration of $\mathcal F_n$, however, in a multiprojective sense: one views $R$ as the multihomogeneous coordinate ring of \[\bP(\wedge^1\bC^n)\times\dots\times\bP(\wedge^{n-1}\bC^n)\] and considers the zero set in this product. The obtained degeneration is again toric, the associated polytope is the Gelfand--Tsetlin polytope (as shown in~\cite{KM}). The Gelfand--Tsetlin polytope is known to be a marked order polytope (\cite{ABS}) for the underlying poset $(P,<)$. The task here is to adjust Theorem~\ref{thma} to this multiprojective and marked setting.
\begin{customthm}{C}[cf. Theorem~\ref{mainflags}]
Consider a face $F$ of $K$ and let $Y^F_\mult$ be the multiprojective variety cut out by $I^F$. Then $Y^F$ is a flat degeneration of $\mathcal F_n$. Its irreducible components are the toric varieties associated with marked order polytopes of certain orders on $P$. Together these polytopes form a regular subdivision of the Gelfand--Tsetlin polytope.
\end{customthm}

To avoid overloading the paper we do not explicitly prove a version of Theorem~\ref{thmb} for this case. Nevertheless, we believe that such a version does exist. We briefly define the relevant objects and write what the corresponding statements should be while only partially outlining a proof.


\section{Preliminaries}\label{preliminaries}

\subsection{Gr\"obner degenerations and Gr\"obner fans}

For a positive integer $N$ consider the polynomial ring $R=\bC[X_1,\ldots,X_N]$. For $d=(d_1,\ldots,d_N)\in\bZ_{\ge0}^N$ let $\mathbf X^d=\prod X_i^{d_i}$ and let $(,)$ be the standard scalar product in $\bR^N$. For a set of vectors $d^j\in\bZ_{\ge0}^N$ consider $p=\sum_j c_j\mathbf X^{d^j}\in R$ with $c_j\neq 0$. Consider a vector $w\in\bR^N$ and let $\min_j (w,d^j)=m$. The corresponding initial part of $p$ is then \[\initial_w p=\sum_{j|(w,d^j)=m} c_j\mathbf X^{d^j}.\] In other words, we define a grading on $R$ by setting the grading of $X_i$ equal to $w_i$ and then take the nonzero homogeneous component of $p$ of the least possible grading. For an ideal $I\subset R$ its initial ideal $\initial_w I$ is the linear span of $\{\initial_w p, p\in I\}$ which is easily seen to be an ideal in $R$.

An important property of this construction is that the algebra $R/\initial_w I$ is a \emph{flat degeneration} of the algebra $R/I$, i.e.\ there exists a flat $\bC$-family of algebras with all general fibres isomorphic to $R/I$ and the special fibre isomorphic to $R/\initial_w I$.
\begin{theorem}[see, for instance,~{\cite[Corollary 3.2.6]{HH}}]\label{flatfamily}
There exists a flat $\bC[t]$-algebra $S$ such that $S/\langle t-a\rangle\simeq R/I$ for all nonzero $a\in\bC$ while $S/\langle t\rangle\simeq R/\initial_w I$.
\end{theorem}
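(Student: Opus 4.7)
The plan is to construct $S$ explicitly as a Rees-type algebra inside $R[t]$. First, I would reduce to the case of an integer-valued weight: since $\initial_w I$ depends only on the relatively open Gr\"obner cone containing $w$, we may perturb $w$ within this cone to achieve $w\in\bZ_{\ge 0}^N$ without altering $\initial_w I$ (adding a large constant to all coordinates does not affect the initial ideal either).

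With $w\in\bZ_{\ge 0}^N$ fixed, I would introduce the $\bZ$-grading on $R[t]$ given by $\deg X_i=w_i$ and $\deg t=1$. For each $p=\sum_j c_j\mathbf X^{d^j}\in R$ set $m_p=\min_j(w,d^j)$ and define the \emph{$w$-homogenization}
\[\tilde p=\sum_j c_j\, t^{(w,d^j)-m_p}\mathbf X^{d^j}\in R[t],\]
which is homogeneous of degree $m_p$ for this grading. Let $\tilde I\subset R[t]$ be the ideal generated by all $\tilde p$ for $p\in I$, let $\tilde I^{\mathrm{sat}}=\bigcup_n(\tilde I:t^n)$ denote its saturation with respect to $t$, and set $S=R[t]/\tilde I^{\mathrm{sat}}$. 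Flatness of $S$ over $\bC[t]$ is then immediate: $S$ is a graded $\bC[t]$-module with no $t$-torsion by construction, and any torsion-free module over the PID $\bC[t]$ is flat. For $a\neq 0$ the evaluation $t\mapsto a$ identifies each $\tilde p$ with $a^{-m_p}p$ up to a nonzero scalar, yielding $S/\langle t-a\rangle\simeq R/I$, while for $t\mapsto 0$ the image of $\tilde p$ is precisely $\initial_w p$, giving a surjection $S/\langle t\rangle\twoheadrightarrow R/\initial_w I$.

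The hard part, and the only place where care is needed, is showing that this last surjection is an isomorphism, i.e.\ that the saturation $\tilde I^{\mathrm{sat}}$ contributes no extra elements modulo $t$ beyond those in $\initial_w I$. The standard way around this is to start from a Gr\"obner basis $g_1,\ldots,g_s$ of $I$ with respect to a term order refining the $w$-grading: then on the one hand $\initial_w I=\langle\initial_w g_1,\ldots,\initial_w g_s\rangle$, and on the other hand the $w$-homogenizations $\tilde g_1,\ldots,\tilde g_s$ already generate a saturated ideal (a Buchberger-style $S$-pair computation in $R[t]$ shows that no new syzygies appear after inverting $t$). Once this is checked, $\tilde I^{\mathrm{sat}}=\langle\tilde g_1,\ldots,\tilde g_s\rangle$ and all three claims fall out simultaneously. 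Since this is a textbook result, I would ultimately just cite~\cite[Corollary 3.2.6]{HH} for the full verification.
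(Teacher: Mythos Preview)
The paper does not give its own proof of this theorem: it is stated with a citation to~\cite[Corollary 3.2.6]{HH} and nothing more. Your sketch is precisely the standard homogenization/Rees-type construction that underlies the cited result, so there is no divergence to discuss.

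One small inaccuracy worth fixing: for $a\neq 0$ the evaluation $\tilde p(X,a)$ is not $p$ up to a scalar, but rather $a^{-m_p}\phi_a(p)$ where $\phi_a$ is the automorphism of $R$ sending $X_i\mapsto a^{w_i}X_i$; it is this automorphism that furnishes the isomorphism $S/\langle t-a\rangle\simeq R/I$. This does not affect the validity of your outline.
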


Now let $I$ be homogeneous with respect to the standard grading by total degree and let $\initial_w I$ be a radical ideal. The former obviously implies that $\initial_w I$ is also homogeneous while the latter implies that $I$ is itself radical (see~\cite[Proposition 3.3.7]{HH}). In this case $I$ is the vanishing ideal of a variety $X\subset\bP(\bC^N)$ and $\initial_w I$ is the vanishing ideal of a variety $X^w\subset\bP(\bC^N)$. We obtain the following geometric reformulation of the above theorem.
\begin{cor}\label{gdegen}
There exists a flat family $\mathcal X\subset\bP(\bC^N)\times\bC$ over $\bC$ such that for the projection $\pi$ onto $\bC$ any general fibre $\pi^{-1}(a)$ with $a\neq 0$ is isomorphic to $X$ while the special fibre $\pi^{-1}(0)$ is isomorphic to $X^w$.
\end{cor}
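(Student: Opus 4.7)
The plan is to deduce the geometric statement from Theorem~\ref{flatfamily} by applying the $\mathrm{Proj}$ construction to the flat $\bC[t]$-algebra $S$ provided by that theorem. First I would recall that the usual construction of $S$ realizes it as a subalgebra of $R[t,t^{-1}]$ (or a quotient of $R[t]$ by the $w$-homogenization $\widetilde I$ of $I$), in such a way that $S$ inherits from $R$ a $\bZ_{\ge 0}$-grading by total degree in $X_1,\dots,X_N$, with $t$ placed in degree $0$. The total-degree homogeneity of $I$ ensures that $\widetilde I$ is total-degree homogeneous, so the grading on $S$ is well-defined; flatness over $\bC[t]$ together with the fact that each graded piece $S_n$ is a finitely generated $\bC[t]$-module then forces each $S_n$ to be a free $\bC[t]$-module.

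Next I would set $\mathcal X = \mathrm{Proj}(S) \subset \bP(\bC^N)\times\mathrm{Spec}\,\bC[t]=\bP(\bC^N)\times\bC$, where the embedding comes from the surjection $\bC[X_1,\dots,X_N][t] \twoheadrightarrow S$ of graded $\bC[t]$-algebras. The projection $\pi\colon\mathcal X\to\bC$ is flat because each graded component of $S$ is a flat (free) $\bC[t]$-module, so $\mathrm{Proj}$ commutes with the base changes $\bC[t]\to\bC[t]/\langle t-a\rangle$. Invoking Theorem~\ref{flatfamily}, for $a\ne 0$ the fibre is $\mathrm{Proj}(R/I)=X$, and for $a=0$ it is $\mathrm{Proj}(R/\initial_w I)=X^w$; the latter is a genuine subvariety of $\bP(\bC^N)$ precisely because the hypotheses guarantee that $\initial_w I$ is a radical homogeneous ideal, so the identification with $X^w$ is as schemes and not merely as projective spectra.

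The main thing to be careful about is the compatibility between the two gradings at play: the $w$-grading that defines the degeneration and the standard total-degree grading that enables the passage to $\mathrm{Proj}$. These must coexist on $S$ with $t$ of $w$-degree one and total degree zero, and it is the total-degree homogeneity that is crucial for the $\mathrm{Proj}$ step while the $w$-grading plays no further role once $S$ has been produced. I do not anticipate any essential obstacle beyond verifying this compatibility; the argument is a direct translation of the algebraic flat family into a projective flat family.
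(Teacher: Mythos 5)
Your argument is correct and is precisely the standard passage from the algebraic flat family of Theorem~\ref{flatfamily} to the projective one via $\mathrm{Proj}$ over $\mathrm{Spec}\,\bC[t]$; the paper states the corollary as an immediate ``geometric reformulation'' and omits this verification entirely, so you have simply supplied the intended (and routine) details. The points you flag --- total-degree homogeneity of the homogenized ideal with $t$ in degree $0$, freeness of the graded pieces over $\bC[t]$, and radicality of $\initial_w I$ to identify $\mathrm{Proj}$ of the quotients with the reduced subvarieties $X$ and $X^w$ --- are exactly the ones that need checking.
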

This corollary states that $X^w$ is a flat degeneration of $X$, a flat degeneration of this form is known as a \emph{Gr\"obner degeneration}.

We now move on to define the Gr\"obner fan of $I$ which parametrizes its initial ideals. We retain the assumption that $I$ is homogeneous but $I$ need not be radical here. 

For an ideal $J\subset R$ denote $C(I,J)\subset\bR^N$ the set of points $w$ for which $\initial_w I=J$. The nonempty sets $C(I,J)$ form a partition of $\bR^N$ with $w$ contained in $C(I,\initial_w I)$. This partition is known as the \emph{Gr\"obner fan} of $I$ (introduced in~\cite{MR}), its basic properties are summed up in the below theorem. This information can be found in Chapters 1 and 2 of~\cite{S} (modulo a switch between the $\min$ and $\max$ conventions). 
\begin{theorem}\label{gfans}
\hfill
\begin{enumerate}[label=(\alph*)]
\item There are only finitely many different nonempty sets $C(I,J)$.
\item Every nonempty $C(I,J)$ is a relatively open polyhedral cone.
\item Together all the nonempty $C(I,J)$ form a polyhedral fan with support $\bR^N$. This means that every face of the closure $\overline{C(I,J)}$ is itself the closure of some $C(I,J')$. 
\item\label{IJ'J} If $\overline{C(I,J')}$ is a face of $\overline{C(I,J)}$, then the set $C(J',J)$ is nonempty. Conversely, if the sets $C(I,J')$ and $C(J',J)$ are both nonempty, then so is $C(I,J)$ and $\overline{C(I,J')}$ is a face of $\overline{C(I,J)}$.
\item A nonempty cone $C(I,J)$ is maximal in the Gr\"obner fan if and only if $J$ is monomial.
\end{enumerate}
\end{theorem}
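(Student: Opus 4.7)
The plan is to reduce the whole statement to two ingredients: the existence of a finite universal Gr\"obner basis of $I$, and the perturbation identity
\[\initial_{w+\varepsilon w'}(I)=\initial_{w'}(\initial_w(I))\]
valid for all sufficiently small $\varepsilon>0$. The perturbation identity is the main workhorse and ties the three nontrivial parts (c), (d) and (e) together; it is also the source of the only real technical difficulty.

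For (a) and (b), fix a finite universal Gr\"obner basis $G\subset I$, so that for every $w$ the initial ideal $\initial_w I$ is generated by $\{\initial_w g:g\in G\}$. Since each $g\in G$ has finitely many monomial terms, there are only finitely many possible tuples of supports $(\mathrm{supp}(\initial_w g))_{g\in G}$, which proves (a). For a fixed such tuple, the set of $w$ realising it is cut out by finitely many linear equations of the form $(w,d)=(w,d')$ (pairs of monomials of the same $g$ tied for the $w$-minimum) together with strict linear inequalities $(w,d)<(w,d')$ (one monomial of $g$ strictly beats another), which is precisely a relatively open polyhedral cone, giving (b).

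For (c) and (d), the perturbation identity says that if $\bar w=w+\varepsilon w'$ with $\varepsilon$ small, then $\initial_{\bar w}I=\initial_{w'}(\initial_w I)$, so iterated initial ideals correspond to moving between faces of closures of Gr\"obner cones. Concretely, if $\overline{C(I,J')}$ is a face of $\overline{C(I,J)}$, pick $\bar w$ in the relative interior of the face and $w$ in the relative interior of the big cone; then $\bar w+\varepsilon(w-\bar w)\in C(I,J)$ for small $\varepsilon$, so the identity yields $J=\initial_{w-\bar w}J'$ and $C(J',J)$ is nonempty. The converse in (d) is the same identity read forwards: if $C(I,J')$ and $C(J',J)$ are both nonempty, choosing $\bar w$ and $w'$ in them shows $\bar w+\varepsilon w'\in C(I,J)$ for small $\varepsilon$, placing $\bar w$ in $\overline{C(I,J)}$. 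The fan property (c) is then the combinatorial consequence: closures of Gr\"obner cones are closed under taking faces.

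For (e), a cone $C(I,J)$ is full-dimensional (and hence maximal, by the fan structure) if and only if every small perturbation of a point $w\in C(I,J)$ preserves the initial ideal, i.e.\ $\initial_{w'}J=J$ for all $w'$ in an open set. By the perturbation identity this means $J$ is invariant under all such initial operations, which forces $J$ to be monomial (one picks a generic $w'$ and shows that every nonmonomial element of $J$ would contribute genuinely new monomial generators); conversely a monomial ideal equals its own initial ideal under every weight. The main obstacle is the perturbation identity itself: one must show that after computing a Gr\"obner basis adapted to $w$, the $w'$-initial forms of its generators furnish a generating set for $\initial_{w+\varepsilon w'}I$. This hinges on the fact that $w$-initial forms of a Gr\"obner basis for $I$ with respect to any monomial order refining $w$ again form a Gr\"obner basis of $\initial_w I$, after which a direct comparison of $S$-polynomial reductions on both sides yields the identity.
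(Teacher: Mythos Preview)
The paper does not give its own proof of this theorem: it is stated as a preliminary result with the remark that ``this information can be found in Chapters~1 and~2 of~[S]'' (Sturmfels, \emph{Gr\"obner Bases and Convex Polytopes}). So there is no in-paper argument to compare against; the relevant benchmark is the treatment in~[S].

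Your sketch follows that treatment closely. The universal Gr\"obner basis argument for (a) and (b) is exactly how Sturmfels sets things up, and the perturbation identity you invoke is precisely Proposition~1.13 of~[S] --- the paper itself cites it in this form later on. Your handling of (d) and (e) via that identity is correct. The one place where your outline is thin is (c): saying ``the fan property is then the combinatorial consequence'' glosses over the actual work, namely showing that any two closed Gr\"obner cones meet along a common face of each. This does follow once one knows that the closure $\overline{C(I,J)}$ is cut out by the equalities and inequalities coming from a fixed universal Gr\"obner basis (so all the cones live in a common hyperplane arrangement), but you have not said this and the perturbation identity alone does not deliver it. It would be worth adding a sentence to that effect.
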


\subsection{Hibi varieties and order polytopes}

Let $(\mL,\lor,\land)$ be a finite distributive lattice with induced order relation $<$ (so, for instance, $a=b\lor c$ is the $<$-minimal element such that $b<a$ and $c<a$). For each $a\in\mL$ introduce the variable $X_a$ and consider the polynomial ring $R(\mL)=\bC[\{X_a,a\in\mL\}]$. The \textit{Hibi ideal} of $\mL$ is the ideal $I^h(\mL)\subset R(\mL)$ generated by the elements \[d(a,b)=X_aX_b-X_{a\lor b}X_{a\land b}\] for all $a,b\in\mathcal L$, note that $d(a,b)\neq 0$ only if $a$ and $b$ are $<$-incomparable. This notion originates from~\cite{H}.

The corresponding \emph{Hibi variety} is the subvariety $H(\mL)\subset\bP(\bC^\mL)$ cut out by $I^h(\mL)$. It turns out that this variety is the toric variety associated with the order polytope of the poset of join-irreducible elements in $\mL$. Below we define these notions.

First we will need the fundamental theorem of finite distributive lattices. An element $a\in\mL$ is called join-irreducible if $a$ is not minimal in $\mL$ and $a=b\lor c$ implies $a=b$ or $a=c$. This is equivalent to $a$ covering exactly one element. Let $P(\mL)$ denote the set of join-irreducible elements in $\mL$, we have the poset $(P(\mL),<)$ with the induced order relation. 

A small remark on notation: when speaking of distributive lattices we will often use the underlying set to denote the whole lattice, e.g.\ write $\mL$ instead of $(\mL,\lor,\land)$ or $(\mL,<)$. This will be unambiguous because we will not be considering two different distributive lattice structures on the same set. However, when speaking of general posets $(P,<)$ (which appear in this paper as posets of join-irreducible elements in lattices) we will always differentiate between $(P,<)$ and $P$, since we will be considering multiple order relations on the same set.  

Now, for a poset $(P,<)$ let $\mathcal J(P,<)$ be the set of order ideals in $(P,<)$ (we use the term ``order ideal'' synonymously to ``lower set''). It is easy to see that $\mathcal J(P,<)$ is a distributive lattice with union (of order ideals) as join, intersection as meet and inclusion as the order relation. The following classical result due to Garret Birkhoff is known as \emph{Birkhoff's representation theorem} or the \emph{fundamental theorem of finite distributive lattices}, a proof can be found in~\cite[Theorem 9.1.7]{HH}.
\begin{theorem}
The distributive lattices $\mL$ and $\mathcal J(P(\mL),<)$ are isomorphic. An isomorphism $\iota_\mL$ is obtained by mapping $a\in\mL$ to the order ideal in $P(\mL)$ composed of all join-irreducible elements $p$ with $p\le a$.  
\end{theorem}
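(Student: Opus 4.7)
The plan is to establish a bijective lattice homomorphism by first checking the map is well-defined, then proving the two key algebraic facts that drive everything: a ``generation'' statement (every element is the join of the join-irreducibles beneath it) and a ``primality'' statement (join-irreducibles behave like primes under joins). Both injectivity and surjectivity will follow once these are in hand.

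First I would verify that $\iota_\mL(a)=\{p\in P(\mL): p\le a\}$ is genuinely an order ideal: transitivity of $<$ takes care of this. Next, the generation lemma says that for every $a\in\mL$, $a=\bigvee \iota_\mL(a)$. Proof is by induction on the height of $a$ in $\mL$: if $a$ itself is join-irreducible (or minimal) the statement is immediate; otherwise write $a=b\lor c$ with $b,c<a$, apply induction, and observe $\iota_\mL(b)\cup\iota_\mL(c)\subseteq\iota_\mL(a)$. This already implies injectivity of $\iota_\mL$, since $\iota_\mL(a)=\iota_\mL(a')$ forces $a=\bigvee\iota_\mL(a)=\bigvee\iota_\mL(a')=a'$.

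The primality lemma states: if $p\in P(\mL)$ and $p\le b\lor c$, then $p\le b$ or $p\le c$. This is exactly where distributivity is used, and it is the step I expect to require the most care. The argument is $p=p\land(b\lor c)=(p\land b)\lor(p\land c)$ by distributivity; since $p$ is join-irreducible, one of the two meets must equal $p$, giving $p\le b$ or $p\le c$. Iterating, if $p\le\bigvee S$ for a finite $S\subset\mL$ then $p\le s$ for some $s\in S$. With this in hand, surjectivity comes from the following: given an order ideal $J\subseteq P(\mL)$, set $a_J=\bigvee_{p\in J}p$; clearly $J\subseteq\iota_\mL(a_J)$, and conversely if $q\in\iota_\mL(a_J)$ then $q\le a_J=\bigvee_{p\in J}p$, so by primality $q\le p$ for some $p\in J$, and since $J$ is an order ideal we get $q\in J$.

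Finally, that $\iota_\mL$ preserves $\land$ and $\lor$: for meets, $\iota_\mL(a\land b)=\iota_\mL(a)\cap\iota_\mL(b)$ is immediate because $p\le a\land b$ iff $p\le a$ and $p\le b$. For joins, the inclusion $\iota_\mL(a)\cup\iota_\mL(b)\subseteq\iota_\mL(a\lor b)$ is obvious, while the reverse inclusion is exactly the primality lemma applied to $p\le a\lor b$. Thus $\iota_\mL$ is a lattice isomorphism. The only genuinely nontrivial ingredient is distributivity, used through the primality lemma; everything else is bookkeeping with finiteness and induction.
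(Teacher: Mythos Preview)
Your proof is correct and complete; the generation lemma and primality lemma are exactly the two ingredients needed, and your use of distributivity in the primality step is the crucial point. The paper itself does not supply a proof of this statement: it simply cites \cite[Theorem~9.1.7]{HH} (Herzog--Hibi), treating Birkhoff's representation theorem as a classical fact. So you have provided a self-contained argument where the paper defers to the literature; your approach is in fact the standard one found in most textbook treatments.
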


In the next proposition we list some basic properties of this correspondence that are straightforward from the definitions. Recall that the height of $\mL$ is the number of elements in any maximal chain in $\mL$ and that the height $|a|$ of $a\in\mL$ is the distance in the Hasse diagram between $a$ and the minimal element in $\mL$.
\begin{proposition}\label{basic}
\hfill
\begin{enumerate}[label=(\alph*)]
\item $\iota_\mL^{-1}(\varnothing)$ is the unique minimal element in $\mL$ and $\iota_\mL^{-1}(P)$ is the unique maximal element.
\item $a$ covers $b$ in $\mL$ if and only if $\iota_\mL(a)$ is obtained from $\iota_\mL(b)$ by adding a single element, the cardinality $|\iota_\mL(a)|$ is equal to the height $|a|$. 
\item The height of $\mL$ is equal to $|P|+1$.
\item $p\in\mL$ is join-irreducible if and only if the order ideal $\iota_\mL(p)$ is principal: it consists of all $q\in P(\mL)$ with $q\le p$.
\item Every $a\in\mL$ is equal to the join of all join-irreducible elements $p\in\iota_\mL(a)$.
\end{enumerate}
\end{proposition}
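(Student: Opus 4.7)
The strategy is to push everything through Birkhoff's isomorphism $\iota_\mL\colon\mL\xrightarrow{\sim}\mathcal{J}(P(\mL),<)$ and verify each claim in the combinatorial model $\mathcal J(P,<)$, where join is union, meet is intersection, and the lattice order is inclusion. Writing $P=P(\mL)$ throughout, each part then reduces to an elementary statement about order ideals.

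For (a), the empty set and all of $P$ are clearly the $\subseteq$-minimum and $\subseteq$-maximum of $\mathcal J(P,<)$. For the first assertion of (b), if $J\supsetneq J'$ are order ideals, pick any $<$-minimal element $p$ of $J\setminus J'$; then $J'\cup\{p\}$ is again an order ideal strictly between $J'$ and $J$ (or equal to $J$), and the covering case is exactly when $|J\setminus J'|=1$. The identity $|\iota_\mL(a)|=|a|$ then follows by induction along any maximal chain from the minimum of $\mL$ to $a$: each covering step enlarges the ideal by exactly one element, so the length of such a chain equals the cardinality of $\iota_\mL(a)$. Part (c) is then immediate: a maximal chain in $\mL$ corresponds to a chain $\varnothing\subsetneq\cdots\subsetneq P$ of order ideals where each step adds one element, so it has exactly $|P|+1$ members.

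For (d), I would characterize $\iota_\mL(p)$ being principal in terms of maximal elements: an order ideal $J\ne\varnothing$ is principal precisely when it has a unique $<$-maximal element. If $J$ has a unique maximum $q$, then the only way to write $J=J_1\cup J_2$ with order ideals $J_i\subseteq J$ is to have $q\in J_1$ or $q\in J_2$, forcing $J_1=J$ or $J_2=J$, so $J$ is join-irreducible in $\mathcal J(P,<)$. Conversely, if $J$ has two distinct maximal elements $q_1,q_2$, then $J=(J\setminus\{q_1\})\cup(J\setminus\{q_2\})$ expresses $J$ as a join of two proper sub-ideals, contradicting join-irreducibility. Here the mild subtlety is that ``join-irreducible'' excludes the minimum; this matches the convention that $\varnothing$ (having no maximum) is not considered principal. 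Part (e) is then the tautology that any order ideal is the union of the principal ideals generated by its elements, which translates back under $\iota_\mL$ to the desired join formula.

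There is no real obstacle here—the whole proposition is essentially a dictionary between the two descriptions of $\mL$. The only care needed is in part (d) to match the definition of join-irreducibility (``not minimal and not a join of two strictly smaller elements'') with the notion of a principal order ideal (nonempty with a unique maximum).
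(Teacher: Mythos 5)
Your proof is correct. The paper offers no proof of this proposition at all (it is stated as ``straightforward from the definitions''), and your route --- transporting everything through Birkhoff's isomorphism and verifying each claim for order ideals, with the covering relation characterized by adding a $<$-minimal element of the difference and join-irreducibility characterized by a unique maximal element --- is exactly the intended elementary argument.
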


We have the following alternative interpretation of the Hibi ideal. Consider variables $z_p$ indexed by $p\in P(\mL)$ and let $S=\bC[\{z_p\},t]$. Define a homomorphism $\varphi:R(\mL)\to S$ given by \[\varphi(X_a)=t\prod_{p\in\iota_\mL(a)}z_p.\] It is immediate from the fundamental theorem that the generators $d(a,b)$ of $I^h(\mL)$ lie in the kernel of $\varphi$. Moreover, the following holds.
\begin{proposition}[{\cite[Section 2]{H}}]\label{kernel}
$I^h(\mL)$ is the kernel of $\varphi$.
\end{proposition}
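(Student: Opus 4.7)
The plan is to check both inclusions separately. The inclusion $I^h(\mL) \subseteq \ker\varphi$ is immediate: Birkhoff's theorem gives $\iota_\mL(a \lor b) = \iota_\mL(a) \cup \iota_\mL(b)$ and $\iota_\mL(a \land b) = \iota_\mL(a) \cap \iota_\mL(b)$, so the elementary identity $\chi_A + \chi_B = \chi_{A \cup B} + \chi_{A \cap B}$ for indicator functions yields $\varphi(X_a X_b) = \varphi(X_{a \lor b} X_{a \land b})$ and hence $\varphi(d(a,b)) = 0$.

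For the nontrivial inclusion the plan is to exhibit a convenient set of representatives modulo $I^h(\mL)$. First I would show that every monomial $X_{a_1} \cdots X_{a_k} \in R(\mL)$ is congruent modulo $I^h(\mL)$ to a \emph{multichain monomial}, i.e.\ one indexed by $a_1 \le \cdots \le a_k$ in $\mL$. Starting from an arbitrary monomial, one repeatedly applies $X_a X_b \to X_{a \lor b} X_{a \land b}$ whenever two factors have incomparable indices. Since $(\mL,<)$ is distributive (hence modular) and graded by height, the identity $|a \lor b| + |a \land b| = |a| + |b|$ holds; at the same time incomparability of $a, b$ forces $|a \lor b| > \max(|a|, |b|)$, because $a \le a \lor b$ and equality of heights in a graded poset would force $a = a \lor b$. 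Thus the pair $(|a \lor b|, |a \land b|)$ is strictly more spread than $(|a|, |b|)$, and the statistic $\sum_i |a_i|^2$ strictly increases at each step. As this statistic takes integer values bounded above (the heights are bounded and $k$ is fixed), the process terminates, and upon termination the indices are pairwise comparable.

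Next I would check that $\varphi$ is injective on the $\bC$-span of multichain monomials. For $a_1 \le \cdots \le a_k$ one has
\[\varphi(X_{a_1} \cdots X_{a_k}) = t^k \prod_{p \in P(\mL)} z_p^{n_p}, \qquad n_p = |\{i : p \in \iota_\mL(a_i)\}|.\]
Since $\iota_\mL(a_1) \subseteq \cdots \subseteq \iota_\mL(a_k)$, we have $p \in \iota_\mL(a_i) \iff i \ge k - n_p + 1$, so the multichain is fully recoverable from $k$ (read off from the exponent of $t$) and the exponents $n_p$. Hence distinct multichain monomials map to distinct monomials in $S$.

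Combining these, given any $f \in \ker\varphi$, the first step rewrites it modulo $I^h(\mL)$ as a linear combination $\tilde f$ of multichain monomials, and $\tilde f \in \ker\varphi$ because $I^h(\mL) \subseteq \ker\varphi$; the second step then forces $\tilde f = 0$, so $f \in I^h(\mL)$. The main technical point is the termination argument, which depends on choosing the monovariant $\sum_i |a_i|^2$ compatible with the modularity of the height function on a distributive lattice; once this is in place, everything else is routine.
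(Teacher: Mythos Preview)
Your argument is correct. The paper itself does not prove this proposition: it is stated with a citation to~\cite{H} and no proof is given. That said, your straightening step---replacing an incomparable pair $a,b$ by $a\lor b,a\land b$ and tracking the monovariant $\sum_i |a_i|^2$---is precisely the argument the paper invokes later (in the proof following Lemma~\ref{samesum}) to show that every integer point of $V_l$ arises from a weakly increasing tuple. Your recovery of the chain from the exponent vector $(k,(n_p)_p)$ is also correct, so the injectivity of $\varphi$ on multichain monomials goes through. In short, this is the standard Hibi-style proof and fits seamlessly with the techniques already present in the paper.
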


Next, for an arbitrary finite poset $(P,<)$ we define its \emph{order polytope} $O(P,<)$. This is a convex polytope in the space $\bR^P$ consisting of points $x$ with coordinates $(x_p, p\in P)$ satisfying $0\le x_p\le 1$ for all $p$ and $x_p\le x_q$ whenever $p>q$. It is immediate from the definition that the set of integer points in $O(P,<)$ is the set of indicator functions $\bm 1_J$ of order ideals $J\subset P$. It is also easily seen that each of these integer points is a vertex of $O(P,<)$ (see~\cite[Corollary 1.3]{stan}). This shows that the vertices (and integer points) of $O(P(\mL),<)$ are in natural bijection with $\mathcal J(P(\mL),<)$ and, via the fundamental theorem, with $\mL$.

It should be pointed out that the original definition in~\cite{stan} contains the reverse (and, perhaps, more natural) inequality $x_p\le x_q$ whenever $p<q$, this amounts to reflecting $O(P,<)$ in the point $(\frac12,\dots,\frac12)$. We use the above definition to adhere to the standard convention of considering join-irreducibles and order ideals in the fundamental theorem (rather than meet-irreducibles and order filters).

The order polytope $O(P(\mL),<)$ has the important property of being \emph{normal} (see, for instance,~\cite[Theorem 2.5]{FF}). This means that for any integer $k>0$ every integer point in its dilation $kO(P(\mL),<)$ can be expressed as the sum of $k$ (not necessarily distinct) integer points in $O(P(\mL),<)$. In other words, this means that the set $kO(P(\mL),<)\cap\bZ^\mL$ is the $k$-fold Minkowski sum of $O(P(\mL),<)\cap\bZ^\mL$ with itself.

The toric variety associated with a normal polytope is easy to describe, its homogeneous coordinate ring is the semigroup ring of the semigroup generated by the polytope's integer points, see~\cite[\textsection2.3]{CLS}. In the case of $O(P(\mL),<)$ this ring is precisely the image of the map $\varphi$ considered in Proposition~\ref{kernel}. This leads to the following fact which is also sometimes attributed to~\cite{H}.
\begin{proposition}\label{toricvariety}
$H(\mL)$ is isomorphic to the toric variety associated with the polytope $O(P(\mL),<)$.
\end{proposition}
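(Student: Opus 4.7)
The plan is to identify the homogeneous coordinate ring $R(\mL)/I^h(\mL)$ of $H(\mL)$ with the semigroup algebra of the normal polytope $O(P(\mL),<)$ and then invoke the standard description of the projective toric variety attached to such a polytope. The key inputs are already in place: Proposition~\ref{kernel} provides the isomorphism $R(\mL)/I^h(\mL)\simeq\mathrm{Im}(\varphi)\subset S$, while the preceding discussion identifies the integer points of $O(P(\mL),<)$ with indicator vectors $\bm 1_J$ of order ideals $J\subset P(\mL)$, which in turn correspond to $\mL$ via the fundamental theorem.

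First I would note that $\varphi(X_a)=t\prod_{p\in\iota_\mL(a)}z_p$ is precisely the monomial encoding the lattice point $(\bm 1_{\iota_\mL(a)},1)$ in the cone over $O(P(\mL),<)\times\{1\}$, so as $a$ ranges over $\mL$ the images $\varphi(X_a)$ are exactly the degree-one generators of the semigroup algebra of $O(P(\mL),<)$. The total-degree grading on $R(\mL)$ descends to a grading on $\mathrm{Im}(\varphi)$ that matches the $t$-degree, and in degree $k$ the image is spanned by the monomials $t^k\prod_{p\in P(\mL)}z_p^{e_p}$, where the exponent vector $(e_p)$ runs over all sums of $k$ indicator vectors $\bm 1_{J_1}+\dots+\bm 1_{J_k}$ of order ideals. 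Any such sum clearly lies in $kO(P(\mL),<)\cap\bZ^{P(\mL)}$, and conversely, by normality of the order polytope (cited just before the proposition) every lattice point in $kO(P(\mL),<)$ arises this way. Hence the $k$-th graded piece of $\mathrm{Im}(\varphi)$ has a basis indexed by $kO(P(\mL),<)\cap\bZ^{P(\mL)}$, and therefore $\mathrm{Im}(\varphi)$ coincides, as a graded algebra, with the semigroup algebra of $O(P(\mL),<)$.

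Finally, I would compare this with the construction of the toric variety of a normal lattice polytope from \cite[\textsection 2.3]{CLS}: the projective embedding is given by the vertices, so the ambient space is $\bP(\bC^\mL)$ under the vertex indexing by $\mL$, and the homogeneous coordinate ring is exactly the graded semigroup algebra above. Both $H(\mL)$ and the toric variety then sit inside the same projective space with the same homogeneous coordinate ring, so they coincide as closed subvarieties. The main non-formal input is normality; once it is invoked, the argument reduces to checking that the quadratic relations $d(a,b)$ suffice to cut out the full toric ideal in every degree, which is exactly what normality combined with Proposition~\ref{kernel} delivers.
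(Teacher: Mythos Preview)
Your proposal is correct and follows essentially the same approach as the paper, which in fact gives no formal proof but only the sentence preceding the proposition: the homogeneous coordinate ring of a normal polytope's toric variety is its semigroup ring, and for $O(P(\mL),<)$ this ring is precisely $\mathrm{Im}(\varphi)\simeq R(\mL)/I^h(\mL)$ by Proposition~\ref{kernel}. Your write-up simply unpacks these two lines in more detail; the only minor quibble is your final sentence, where normality is not what makes the $d(a,b)$ generate the toric ideal (that is entirely Proposition~\ref{kernel}) but rather what ensures $\mathrm{Im}(\varphi)$ is the full section ring so that $\mathrm{Proj}$ recovers the normal toric variety.
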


\subsection{The monomial ideal and the maximal cone}

As before, let $\mL$ be a finite distributive lattice. We consider the monomial ideal $I^m(\mL)\subset R(\mL)$ generated by products $X_aX_b$ for all incomparable pairs $\{a,b\}\subset\mathcal L$. Call a monomial $X_{a_1}\ldots X_{a_k}$ \textit{standard} if the elements $a_1,\ldots,a_k$ are pairwise comparable (they form a weak chain). Then one sees that non-standard monomials form a basis in $I^m(\mL)$. 

It is easily seen that $I^m(\mL)$ is an initial ideal of $I^h(\mL)$, i.e.\ the cone $C(I^h(\mL),I^m(\mL))$ is nonempty (for instance, see~\cite[Proposition 2.1]{M}). By part (e) of Theorem~\ref{gfans}, the cone $C(I^h(\mL),I^m(\mL))$ is maximal in the Gr\"obner fan of $I$, the paper~\cite{M} provides an explicit description of this maximal cone.

We say that $\{a,b\}\subset\mL$ is a \emph{diamond pair} if $a\lor b$ covers both $a$ and $b$ (in terms of the order $<$ on $\mL$) or, equivalently, both $a$ and $b$ cover $a\land b$. This means that in the Hasse diagram of the poset $(\mL,<)$ (we view Hasse diagrams as abstract oriented graphs with edges directed from the lesser vertex to the greater) we have the following ``diamond'' as a subgraph:

\begin{center}
\begin{tikzcd}[row sep=1mm, column sep=1mm]
&a\lor b&\\
 a  \arrow[ru]&& b\arrow[lu]\\[3pt]
 &\arrow[lu] a\land b\arrow[ru]&
\end{tikzcd}
\end{center}

Note that by part (b) of Proposition~\ref{basic} for a diamond pair $\{a,b\}$ we have \[|a|=|b|=|a\land b|+1=|a\lor b|-1.\] The same part (b) shows that order ideals $\iota_\mL(a)$ and $\iota_\mL(b)$ are both obtained from $\iota_\mL(a\land b)$ by adding one element and $\iota_\mL(a\lor b)$ is obtained from $\iota_\mL(a\land b)$ by adding both of these elements. Now to the minimal H-description.

\begin{theorem}[{\cite[Theorem 2.3]{M}}]\label{hibimonfacets}
The cone $C(I^h(\mL),I^m(\mL))$ consists of those $w\in\bR^\mL$ that satisfy \[w_a+w_b<w_{a\land b}+w_{a\lor b}\] for all diamond pairs $\{a,b\}$ in $\mL$. This H-description is minimal: for every diamond pair $\{a,b\}$ points in $\overline{C(I^h(\mL),I^m(\mL))}$ satisfying $w_a+w_b=w_{a\land b}+w_{a\lor b}$ form a facet of $\overline{C(I^h(\mL),I^m(\mL))}$.
\end{theorem}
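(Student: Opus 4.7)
The plan is to prove the characterization of the cone by two inclusions and then verify irredundancy of each diamond inequality. For the inclusion $C(I^h(\mL),I^m(\mL))\subseteq\{w:w_a+w_b<w_{a\land b}+w_{a\lor b}\text{ for every diamond pair}\}$, take any $w$ in the cone and any diamond pair $\{a,b\}$: the generator $d(a,b)=X_aX_b-X_{a\lor b}X_{a\land b}$ has $\initial_w d(a,b)\in I^m(\mL)$, but since $a\lor b,a\land b$ are comparable the monomial $X_{a\lor b}X_{a\land b}$ is standard and not in $I^m(\mL)$. Hence $\initial_w d(a,b)=X_aX_b$, giving the strict diamond inequality.

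The reverse inclusion rests on a key lemma: if $w$ satisfies every diamond inequality, then $w_a+w_b<w_{a\land b}+w_{a\lor b}$ holds for \emph{every} incomparable pair $\{a,b\}$. Using $\iota_\mL$ to identify $\mL$ with the order ideals of $P=P(\mL)$ and writing $A=\iota_\mL(a)$, $B=\iota_\mL(b)$, I would induct on $|A\triangle B|$; the base case $|A\triangle B|=2$ is precisely a diamond. For the inductive step, pick $p$ minimal in the induced subposet on $A\triangle B$, say $p\in A\setminus B$. Minimality forces everything strictly below $p$ in $P$ to lie in $A\cap B$, whence $B^+:=B\cup\{p\}$ and $C:=(A\cap B)\cup\{p\}$ are order ideals. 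When $|A\setminus B|\ge 2$, the pairs $(A,B^+)$ and $(B,C)$ are each incomparable with symmetric difference of size less than $|A\triangle B|$; adding the two inductive inequalities $w_A+w_{B^+}<w_C+w_{A\cup B}$ and $w_B+w_C<w_{A\cap B}+w_{B^+}$ cancels $w_{B^+}+w_C$ from both sides and yields the desired $w_A+w_B<w_{A\cap B}+w_{A\cup B}$. When $|A\setminus B|=1$ (so $|B\setminus A|\ge 2$), pick $q$ minimal in $B\setminus A$; then $A\cup\{q\}$ is an order ideal, and combining the inductive inequality for $(A\cup\{q\},B)$ with the actual diamond inequality for $\{C,(A\cap B)\cup\{q\}\}$ (whose join is $A\cup\{q\}$) completes the step. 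With the lemma proved, $\initial_w d(a,b)=X_aX_b$ for every incomparable pair, so $I^m(\mL)\subseteq\initial_w I^h(\mL)$. Both $R(\mL)/I^h(\mL)$ and $R(\mL)/I^m(\mL)$ admit the standard-monomial basis (for $I^h(\mL)$ via Proposition~\ref{kernel} and normality of $O(P,<)$), so their Hilbert series coincide, and Theorem~\ref{flatfamily} extends this to $R(\mL)/\initial_w I^h(\mL)$. A containment of homogeneous ideals with quotients of identical Hilbert series must be an equality, giving $\initial_w I^h(\mL)=I^m(\mL)$.

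For the facet statement, fix a diamond pair $\{a_0,b_0\}$ and exhibit a point of $\overline{C(I^h(\mL),I^m(\mL))}$ at which only that inequality is saturated. Taking $w^0_a:=|\iota_\mL(a)|^2$, direct expansion gives defect $w^0_{a\land b}+w^0_{a\lor b}-w^0_a-w^0_b=2|\iota_\mL(a)\setminus\iota_\mL(b)|\cdot|\iota_\mL(b)\setminus\iota_\mL(a)|=2$ on every diamond. Let $\epsilon$ be the perturbation with $\epsilon_{a_0}=\epsilon_{b_0}=1$ and $\epsilon_a=0$ otherwise. The defect of a diamond $\{e,f\}$ with meet $c$ and join $d$ changes by $\epsilon_c+\epsilon_d-\epsilon_e-\epsilon_f$; for $\{e,f\}=\{a_0,b_0\}$ this equals $-2$ (since incomparability of $a_0,b_0$ forces $c,d\notin\{a_0,b_0\}$), producing defect zero, while for any other diamond at most one of $e,f$ lies in $\{a_0,b_0\}$ (otherwise $\{e,f\}=\{a_0,b_0\}$), so the change is at least $-1$ and the defect remains positive. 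The main obstacle is the inductive lemma: one must pick $p$ with the correct minimality property and verify both that the constructed sets $B^+$, $C$, $A\cup\{q\}$ really are order ideals and that the auxiliary pairs stay incomparable, ensuring that the inductive hypothesis yields strict rather than weak inequalities.
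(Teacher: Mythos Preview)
The paper does not prove this theorem: it is quoted verbatim from \cite[Theorem~2.3]{M} and used as a black box. So there is no ``paper's own proof'' to compare against; I can only assess your argument on its merits.

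Your argument is correct. The easy inclusion is handled properly. The key inductive lemma (diamond inequalities imply the supermodular inequality for \emph{all} incomparable pairs) is sound: the choice of $p$ minimal in $A\triangle B$ guarantees that $B^+$ and $C$ are order ideals, and the bookkeeping $A\cap B^+=C$, $A\cup B^+=A\cup B$, $B\cap C=A\cap B$, $B\cup C=B^+$ is right, so the two inductive strict inequalities telescope. In the boundary case $|A\setminus B|=1$ you correctly observe that $C=A$, so one of the two summands is an honest diamond inequality rather than an inductive one; the sizes $|A^+\triangle B|=|A\triangle B|-1$ and $|B\triangle C|=|B\setminus A|+1<|A\triangle B|$ justify the inductive calls. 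Once the lemma is in hand, the Hilbert-series comparison (standard monomials are a basis of both $R/I^h$ and $R/I^m$, the former being Hibi's original result) upgrades $I^m\subseteq\initial_w I^h$ to equality.

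For the facet claim your construction works: the quadratic weight $w^0_a=|\iota_\mL(a)|^2$ gives defect $2yz$ on a pair with $|A\setminus B|=y$, $|B\setminus A|=z$, hence defect exactly $2$ on every diamond; the perturbation $\epsilon$ drops the defect of $\{a_0,b_0\}$ to zero while every other diamond loses at most $1$ (since $\{e,f\}\neq\{a_0,b_0\}$ forces $\epsilon_e+\epsilon_f\le1$, and $\epsilon_c+\epsilon_d\ge0$). The existence of an interior point $w^0$ shows the cone is full-dimensional, so irredundancy of each inequality indeed yields a facet. One cosmetic remark: you invoke ``normality of $O(P,<)$'' for the standard-monomial basis of $R/I^h$, but what you actually need is the linear independence of the images $\varphi(X_{a_1}\cdots X_{a_l})$ for weakly increasing chains, which follows directly from the fact that a chain of order ideals is recoverable from the sum of its indicator functions.
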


\subsection{Regular subdivisions and secondary fans}\label{secondary}

The notions and results here are due to Gelfand, Kapranov and Zelevinsky, see~\cite[Chapter 7]{GKZ}. They, however, use the terms ``coherent subdivision'' and ``coherent triangulation'' rather than ``regular subdivision'' and ``regular triangulation'' that are commonly used today.

Consider a convex polytope $Q\subset\bR^n$ of dimension $n$ with set of vertices $V=\{v_1,\dots,v_k\}$ and a point $c=(c_1,\dots,c_k)\in\bR^V$. Let $S\subset \bR^n\times\bR$ be the union of rays $v_i\times\{x\le c_i\}$ for $i\in[1,k]$ and let $T$ be the convex hull of $S$. It is evident that $T$ is an $(n+1)$-dimensional convex polyhedron with $\rho(T)=Q$ where $\rho$ denotes the projection onto $\bR^n$. Furthermore, $T$ has two kinds of facets $F$: those with $\dim\rho(F)=n$ and those with $\dim\rho(F)=n-1$. Each of the former facets is bounded and is the convex hull of some subset of the points $v_k\times c_k$. Each of the latter facets is an unbounded convex hull of a union of rays in $S$. One could say that the bounded facets are the ones you see when you ``look at $T$ from above''. 

Let $F_1,\dots,F_m$ be the bounded facets of $T$ and denote $Q_i=\rho(F_i)$. The set $\{Q_1,\dots,Q_m\}$ is a polyhedral subdivision of $Q$ which we denote $\Theta_Q(c)$. This means that $\bigcup Q_i=Q$, all $\dim Q_i=n$ and the $Q_i$ together with all their faces form a polyhedral complex. A polyhedral subdivision of the form $\Theta_Q(c)$ is known as a \emph{regular subdivision}. 

Note that the union of the bounded facets $F_i$ is the graph of a continuous convex piecewise linear function on $Q$ and that the $Q_i$ are the (maximal) domains of linearity of this function. A remark on the terminology: we say that a function $f(x)$ is (piecewise) linear on a subset of a real space if it is (piecewise) of the form $(u,x)+b$, functions of the form $(u,x)$ will be referred to as ``functionals''.

Regular subdivisions of $Q$ are also parametrized by a polyhedral fan. For a regular subdivision $\Theta$ denote $C(\Theta)\subset\bR^V$ the set of $c$ such that $\Theta_Q(c)=\Theta$.
\begin{theorem}\label{secondaryfan}
\hfill
\begin{enumerate}[label=(\alph*)]
\item Every $C(\Theta)$ is a relatively open polyhedral cone, together these cones form a polyhedral fan with support $\bR^V$ known as the \emph{secondary fan} of $Q$. 
\item $\overline{C(\Theta')}$ is a face of $\overline{C(\Theta)}$ if and only if $\Theta$ is a refinement of $\Theta'$.
\item $C(\Theta)$ is a maximal cone in the secondary fan if and only if $\Theta$ is a triangulation of $Q$ (such triangulations are known as \emph{regular triangulations}).
\item $C(\{Q\})$ is the minimal cone in the secondary fan, it is a linear space of dimension $n+1$.
\end{enumerate}
\end{theorem}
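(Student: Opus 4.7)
The plan is to give each cone $C(\Theta)$ an explicit polyhedral description in terms of the heights $c\in\bR^V$ and then to analyze how those linear conditions degenerate on the boundary. For any $c$, the graph of the concave upper envelope $g_c$ on $Q$ coincides with the union of the bounded facets of $T$, and on each part $Q_j$ of $\Theta_Q(c)$ it is a single affine function $\ell_j$ on $\bR^n$ satisfying $\ell_j(v)=c_v$ for every vertex $v$ of $Q_j$. Fix $\Theta=\{Q_1,\dots,Q_m\}$ and let $V_j\subset V$ be the vertex set of $Q_j$. The condition $\Theta_Q(c)=\Theta$ then translates into a finite list of linear conditions on $c$: (i) for each $j$ the values $(c_v)_{v\in V_j}$ must admit an affine interpolant $\ell_j$ on $\bR^n$ (a linear equation on $c$ whenever $|V_j|>n+1$); (ii) for every pair of adjacent parts $Q_j,Q_{j'}$ the functions $\ell_j,\ell_{j'}$ satisfy a strict concavity condition across their shared facet, so the envelope genuinely breaks there; (iii) for each $v\in V$ that is not a vertex of any part of $\Theta$, one has the strict inequality $g_c(v)>c_v$ written explicitly via the relevant $\ell_j$. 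This exhibits $C(\Theta)$ as a relatively open polyhedral cone, and since every $c$ falls into exactly one such set, these cones partition $\bR^V$.

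For (b), I would trace how the strict inequalities (ii) and (iii) degenerate on $\overline{C(\Theta)}$: collapsing a concavity inequality across $Q_j\cap Q_{j'}$ merges those two parts into one, while collapsing $g_c(v)>c_v$ at an unused vertex $v$ forces $v$ onto the graph and produces a merge of the adjacent cells through $v$. Conversely every coarsening of $\Theta$ is reached this way, so faces of $\overline{C(\Theta)}$ correspond bijectively to coarsenings of $\Theta$, which is exactly (b) and shows that each face of $\overline{C(\Theta)}$ is the closure of some $C(\Theta')$, completing the fan property in (a). For (c), the cone $C(\Theta)$ is full dimensional iff there are no linear equations (i) in its description, equivalently $|V_j|=n+1$ for every $j$, which characterizes triangulations. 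For (d), the cone $C(\{Q\})$ consists of heights coming from a single affine function $\ell$ on $\bR^n$ via $c_v=\ell(v)$, a linear subspace of $\bR^V$ of dimension $n+1$.

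The main obstacle is the analysis of the degeneration $g_c(v)\to c_v$ for an unused vertex $v$ in (iii): one must verify that it can only occur in a way compatible with merging of adjacent cells of a polyhedral subdivision, which requires careful tracking of how the upper envelope varies continuously with $c$ and of which inequalities become simultaneously tight. In particular, one needs to show that an inequality of type (iii) becomes tight only alongside a matching inequality of type (ii), ruling out spurious faces of $\overline{C(\Theta)}$ that would leave the subdivision unchanged. Once this is in place, the fan axioms, the triangulation criterion, and the dimension of the minimal cone all follow by routine bookkeeping.
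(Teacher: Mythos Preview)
The paper does not prove this theorem at all: it is stated in the preliminaries (Subsection~\ref{secondary}) as a known result due to Gelfand, Kapranov and Zelevinsky, with a reference to \cite[Chapter 7]{GKZ}. So there is no ``paper's own proof'' to compare against; you have supplied an argument where the paper simply cites the literature.

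That said, your sketch is the standard approach and is essentially sound, with two remarks. First, in the paper's convention (rays $v_i\times\{x\le c_i\}$, bounded facets ``seen from above'') the envelope is \emph{convex}, not concave; this is only a sign convention but you should match it. Second, and more substantively, your condition (iii) and the ``main obstacle'' you flag around it are vacuous in the setting of this paper: here $V$ is exactly the vertex set of $Q$, so every $v\in V$ is an extreme point of $Q$ and hence necessarily a vertex of some part of any polyhedral subdivision. There are no ``unused vertices'', and the delicate analysis you anticipate does not arise. (That analysis is needed in the general GKZ framework of arbitrary point configurations, which is broader than what is used here.) With (iii) removed, your description of $C(\Theta)$ by the affine-interpolation equalities (i) and the strict break inequalities (ii), the face/coarsening correspondence in (b), the characterization of maximal cones via $|V_j|=n+1$ in (c), and the identification of $C(\{Q\})$ with restrictions of affine functions in (d) are all correct and constitute the usual proof.
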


\section{Semitoric degenerations from regular subdivisions}\label{semitoric}

We fix a finite distributive lattice $\mL$ and denote $R=R(\mL)$, $I^h=I^h(\mL)$, $I^m=I^m(\mL)$, $H=H(\mL)$, $P=P(\mL)$ and $\iota=\iota_\mL$. We also let $K$ denote the closure $\overline{C(I^h,I^m)}$. 

In view of part (d) of Theorem~\ref{gfans} the faces of $K$ enumerate all ideals $I$ such that $I$ is an initial ideal of $I^h$ and $I^m$ is an initial ideal of $I$ (i.e.\ the cones $C(I^h,I)$ and $C(I,I^m)$ are both nonempty). Choose a face $F$ of $K$ and denote by $I^F$ the corresponding ideal and by $Y^F$ the variety cut out by $I^F$ in $\bP=\bP(\bC^\mL)$. The monomial ideal $I^m$ is squarefree and thus radical, therefore, $I^F$ is also radical and $Y^F$ is a flat degeneration of $H$ as in Corollary~\ref{gdegen}.

Before we state our main theorem we make a few simple order-theoretic observations. Consider a partial order $<'$ on $P$ that is stronger than $<$: for any $p,q\in P$ with $p<q$ we have $p<'q$. For $a\in\mL$ let $v_a=\bm 1_{\iota(a)}\in\bR^P$ be the corresponding vertex of $O(P,<)$. Then the following hold.
\begin{proposition}\label{stronger}
\hfill
\begin{enumerate}[label=(\alph*)]
\item We have inclusions of order polytopes $O(P,<')\subset O(P,<)\subset\bR^P$ and sets of order ideals $\mathcal J(P,<')\subset\mathcal J(P,<)$.
\item The vertices of $O(P,<')$ are those $v_a$ for which $\iota(a)\in\mathcal J(P,<')$, in particular, every vertex of $O(P,<')$ is a vertex of $O(P,<)$.
\item The set $\iota^{-1}(\mathcal J(P,<'))$ is a sublattice in $\mL$.
\end{enumerate}
\end{proposition}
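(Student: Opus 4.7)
The plan is to unwind the definitions; all three parts follow once the reversed convention for order polytopes (noted right after Proposition~\ref{basic}) is kept straight. There is essentially no real obstacle, so the task is to lay out the logical dependence cleanly in a single tidy proof.

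\textbf{Part (a).} I would write $O(P,<)$ as the subset of $\bR^P$ cut out by $0\le x_p\le 1$ for all $p\in P$ together with the inequalities $x_p\le x_q$ over all pairs with $p>q$, and likewise for $O(P,<')$ with $<$ replaced by $<'$. Since $<'$ is stronger than $<$, every pair satisfying $p>q$ also satisfies $p>'q$, so every defining inequality of $O(P,<)$ appears among those of $O(P,<')$. This yields $O(P,<')\subset O(P,<)$. The inclusion $\mathcal J(P,<')\subset\mathcal J(P,<)$ is proved by the same one-line observation: a $<'$-lower set is automatically a $<$-lower set because $p<q$ implies $p<'q$.

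\textbf{Part (b).} I would invoke the standard description (as used in the paper's discussion of order polytopes, following~\cite{stan}) that the vertices of $O(P,<')$ are exactly the indicator functions $\bm 1_J$ for $J\in\mathcal J(P,<')$. Combining this with part (a), each such $\bm 1_J$ equals $v_a$ where $a$ is the unique element of $\mL$ with $\iota(a)=J$. The vertex set of $O(P,<')$ is thus $\{v_a : \iota(a)\in\mathcal J(P,<')\}$, and in particular is contained in the vertex set of $O(P,<)$.

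\textbf{Part (c).} I would check that $\mathcal J(P,<')$ is closed under union and intersection inside $\mathcal J(P,<)$: if $J_1,J_2$ are $<'$-lower sets and $q\in J_1\cup J_2$ with $p<'q$, then $q$ lies in some $J_i$, hence so does $p$, giving $p\in J_1\cup J_2$; the argument for $J_1\cap J_2$ is identical. Since $\iota$ is a lattice isomorphism between $\mL$ and $\mathcal J(P,<)$ sending $\lor$ to union and $\land$ to intersection, the preimage $\iota^{-1}(\mathcal J(P,<'))$ is a sublattice of $\mL$, completing the proof. The only thing to verify with any care is that one is consistent throughout with the paper's convention $x_p\le x_q$ whenever $p>q$.
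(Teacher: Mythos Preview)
Your proof is correct and follows essentially the same approach as the paper's own proof: both parts (a) and (b) are immediate from the definitions and the standard description of vertices of order polytopes, and part (c) is deduced from closure of $\mathcal J(P,<')$ under unions and intersections together with $\iota$ being a lattice isomorphism. Your write-up is simply a more explicit version of the paper's terse argument.
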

\begin{proof}
The inclusions in (a) are immediate from the definitions. Part (b) is just the characterization of the vertices of an order polytope as indicator functions of order ideals. Part (c) holds because the set of order ideals $J(P,<')$ is closed under intersection and union while $\iota$ is a lattice isomorphism from $(\mL,\land,\lor)$ and $(\mathcal J(P,<),\cap,\cup)$.
\end{proof}

Denote the sublattice $\iota^{-1}(\mathcal J(P,<'))$ by $\mM_{<'}$. Note that the map $\iota_{\mM_{<'}}$ coincides with the restriction of $\iota$ to $\mathcal J(P,<')$. We have the natural embedding $H(\mM_{<'})\subset\bP(\bC^{\mM_{<'}})\subset\bP$. Next, let $V$ be the set of vertices of $O(P,<)$. The bijection from $\mL$ to $V$ mapping $a$ to $v_a$ provides a linear isomorphism $\psi:\bR^\mL\to\bR^V$. We now state the main results of this section.
\begin{theorem}\label{main}
\hfill
\begin{enumerate}[label=(\alph*)]
\item For every face $F$ of $K$ there exists $m(F)\in\mathbb N$ together with order relations $<_1^F,\dots,<_{m(F)}^F$ on $P$ such that for any $w\in C(I^h,I^F)$ (relative interior of $F$) we have \[\Theta_{O(P,<)}(\psi(w))=\{O(P,<_1^F),\dots,O(P,<_{m(F)}^F)\}.\] 
\item For every face $F$ of $K$ the variety $Y^F$ is semitoric with irreducible components $H(\mM_{<_1^F}),\dots,H(\mM_{<_{m(F)}^F})\subset\bP$ for the orders $<_1^F,\dots,<_{m(F)}^F$ provided by part (a). 
\item The map sending $F$ to $\{O(P,<_1^F),\dots,O(P,<_{m(F)}^F)\}$ is a bijection between the faces of $K$ and regular subdivisions of $O(P,<)$ into order polytopes of partial orders on $P$.
\end{enumerate}
\end{theorem}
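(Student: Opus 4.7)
The plan is to build a bridge between faces of the Gröbner cone $K$ and regular subdivisions of $O(P,<)$ via the combinatorics of maximal chains in $\mL$. The starting point is Theorem~\ref{hibimonfacets}: a point $w$ lies in the relative interior of a face $F \subset K$ precisely when the set $D_F$ of diamond pairs $\{a,b\}$ satisfying $w_a + w_b = w_{a\land b} + w_{a\lor b}$ depends only on $F$ and not on the choice of $w$. On the polytope side, each maximal chain $a_0 \lessdot a_1 \lessdot \ldots \lessdot a_n$ of $\mL$ corresponds via Proposition~\ref{basic} to a linear extension $<_C$ of $(P,<)$, and the vertices $v_{a_0},\ldots,v_{a_n}$ span an affinely independent simplex equal to $O(P,<_C)$. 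Together these chain-simplices give the triangulation of $O(P,<)$ mentioned in \cite{FL}; it will turn out to be the regular triangulation corresponding to $\psi(w)$ for $w$ in the relative interior of $K$ itself.

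For part (a), I would define an equivalence relation on maximal chains generated by the diamond flips $a_{i-1} \lessdot a_i \lessdot a_{i+1} \sim a_{i-1} \lessdot a_i' \lessdot a_{i+1}$ with $\{a_i, a_i'\} \in D_F$, and for each equivalence class $\mathcal C$ define $<_{\mathcal C}$ as the intersection of the total orders $<_C$ over $C \in \mathcal C$. The key geometric claim is that $\bigcup_{C \in \mathcal C} O(P,<_C) = O(P,<_{\mathcal C})$: a single diamond flip amounts to swapping the relative order of an incomparable pair in $(P,<_{\mathcal C})$, so the union of the two simplices is the order polytope obtained by forgetting that one comparison, and an induction on the flip distance extends this to arbitrary classes. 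For $w$ in the relative interior of $F$, the $D_F$-equalities make the lifts of chain-simplices within one class coplanar, while the strict inequalities for diamonds outside $D_F$ prevent further coalescing between classes. This produces the desired subdivision with $<_i^F$ indexed by the equivalence classes, manifestly depending only on $F$.

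Part (b) I would attack through standard monomials. Because $I^F$ is sandwiched between $I^h$ and $I^m$, a monomial is standard modulo $I^F$ iff it is a weak chain monomial $X_{a_1}\cdots X_{a_k}$; and each such chain lies in $\mM_{<_i^F}$ for $i$ matching the equivalence class of any maximal chain extending it. Hence $I^F \supset \bigcap_i J_i$, where $J_i$ is the vanishing ideal of $H(\mM_{<_i^F})$ in $\bP$. For the reverse inclusion I would compare multigraded Hilbert series: $R/I^F$ and $R/I^m$ share Hilbert series by flatness of the Gröbner degeneration, and this series counts weak chains in $\mL$; on the other side, inclusion-exclusion expresses the series of $R/\bigcap_i J_i$ in terms of sublattices $\mM_{<_i^F} \cap \mM_{<_j^F} = \iota^{-1}(\mathcal J(P,<_i^F) \cap \mathcal J(P,<_j^F))$, to which Proposition~\ref{kernel} applies to reduce everything to chain counts inside these sublattices. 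The combinatorial identity I need is that every weak chain of $\mL$ lies in a unique minimal sublattice among the $\mM_{<_i^F}$ containing it — a statement about the nerve of the family $\{\mathcal J(P,<_i^F)\}$ inside $\mathcal J(P,<)$ that mirrors the polytopal subdivision. Proposition~\ref{toricvariety} then identifies each component as the advertised toric variety.

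For part (c), injectivity is immediate once part (a) is in place: the subdivision recovers $D_F$ (a diamond $\{a,b\}$ lies in $D_F$ iff the two chain-simplices flanking it merge into a common part), and $D_F$ determines $F$ by Theorem~\ref{hibimonfacets}. For surjectivity, given a regular subdivision of $O(P,<)$ into order polytopes of partial orders on $P$, I would pick any lifting $c \in C(\Theta)$ afforded by Theorem~\ref{secondaryfan}, set $w = \psi^{-1}(c)$, and verify $w \in K$ by a local convexity check across each diamond quadrilateral — each diamond sits as a square face inside at most two parts of the subdivision, where the piecewise-linear upper envelope must bend convexly. The main obstacle I foresee is the Hilbert-series comparison in part (b) showing $\bigcap_i J_i$ equals and does not strictly contain $I^F$; the secondary delicate point is the geometric assertion in part (a) that each part is an order polytope of a partial order on $P$, but this is localized to the diamond-flip analysis and should be manageable.
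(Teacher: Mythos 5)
Your overall architecture (maximal chains $\leftrightarrow$ linearizations $\leftrightarrow$ simplices, diamond pairs as the adjacency data, equalities from Theorem~\ref{hibimonfacets} governing which simplices merge) matches the paper's, and your part (c) is essentially sound — in fact your surjectivity argument (take any $c\in C(\Theta)$ and check the diamond inequalities directly from convexity of the lower envelope) is a legitimate alternative to the paper's route through the secondary fan. But there are two genuine gaps. First, in part (a) the step ``an induction on the flip distance extends this to arbitrary classes'' does not work as stated: a single flip produces an order polytope, but the union of three or more flip-equivalent simplices is not handled by naively composing the two-simplex case, and it is not automatic that a flip-equivalence class equals the full set of linearizations of its intersection order. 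What actually makes this work is convexity: one shows that the piecewise-linear interpolant of $w$ on the chain-simplex triangulation is convex (local convexity across each common facet is exactly the diamond inequality of Theorem~\ref{hibimonfacets}, and a local-to-global lemma — the paper's Lemma~\ref{convex} — upgrades this), whence the domains of linearity are convex unions of simplices $O(P,\prec)$, and any such convex union is an order polytope because all its facets are cut out by inequalities of the form $x_p\le x_q$, $0\le x_p\le1$. Your ``coplanarity plus strictness'' remark gestures at this but the convexity mechanism is the missing ingredient.

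Second, part (b) is where the real work lies and your outline does not close it. The containment $\bigcap_i J_i\subseteq I^F$ is asserted with a ``hence'' that does not follow from the standard-monomial observation (knowing a spanning set of $R/I^F$ gives you no containment of ideals in either direction), and a Hilbert-series comparison can only upgrade a known containment to an equality — it cannot produce the containment. The paper in fact establishes the \emph{opposite} containment $I^F\subseteq\bigcap_i J_i$ directly, by realizing $\bigcap_i J_i$ as the kernel of a sum of toric parametrizations $\varphi^F=\bigoplus_i\varphi^F_i$ and showing, via a grading on the target built from the linear pieces $\alpha_i$ of the convex function, that $\varphi^F(\initial_w r)$ is the lowest homogeneous component of $\varphi^h(r)=0$ for $r\in I^h$; this argument is entirely absent from your proposal. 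The subsequent dimension count also needs two facts you would have to prove: that every point of the $~\ell$-fold Minkowski sum of the vertex set is realized by a weak chain (the straightening $a,b\mapsto a\land b,a\lor b$ increasing $\sum|a_j|^2$), and that two chain monomials with the same exponent-sum supported on different parts of the subdivision actually lie on a common face (the paper's Lemma~\ref{samesum}) — this last is exactly the ``combinatorial identity'' you flag as an obstacle, and your inclusion--exclusion over the nerve, while plausible, is strictly harder than the paper's direct count of $\dim\varphi^F(R)_\ell=\dim\varphi^h(R)_\ell$.
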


We proceed to prove the theorem in a series of statements. First, let $\prec$ be a linearization of $<$ (a linear order on $P$ that is stronger than $<$) given by $p_1\prec\dots\prec p_{|P|}$. The polytope $O(P,\prec)$ is a simplex given by 
\begin{equation}\label{simplex}
0\le x_{p_1}\le\dots\le x_{p_{|P|}}\le 1. 
\end{equation}
Suppose that the vertices of this simplex are $v_{a_1},\dots,v_{a_{|P|+1}}$ for some $a_1,\dots,a_{|P|+1}\in\mL$. We denote the subset $\{a_1,\dots,a_{|P|+1}\}=C_\prec$.
\begin{proposition}\label{maxchains}
\hfill
\begin{enumerate}[label=(\alph*)]
\item The simplices $O(P,\prec)$ where $\prec$ ranges over all linearizations of $<$ form a triangulation of $O(P,<)$.
\item For every linearization $\prec$ the set $C_\prec$ is a maximal chain in $\mL$ and every maximal chain has the form $C_\prec$ for some linearization $\prec$.
\item For two linearizations $\prec$ and $\prec'$ the simplices $O(P,\prec)$ and $O(P,\prec')$ have a common facet if and only if the symmetric difference of $C_\prec$ and $C_{\prec'}$ is a diamond pair.
\end{enumerate}
\end{proposition}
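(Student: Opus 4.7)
The plan is to prove the three parts in the order (b), (a), (c), using Proposition~\ref{basic} to translate between chains in $\mL$ and combinatorial data on $(P,<)$.

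For (b), a linearization $\prec$ with $p_1\prec\cdots\prec p_n$ produces a saturated chain of order ideals $\varnothing=J_0\subsetneq J_1\subsetneq\cdots\subsetneq J_n=P$ with $J_k=\{p_1,\dots,p_k\}$, each an order ideal in $(P,<)$ precisely because $\prec$ extends $<$. Under $\iota^{-1}$ this becomes a chain $a_0<\cdots<a_n$ in $\mL$; Proposition~\ref{basic}(b) gives that each $a_{k+1}$ covers $a_k$, and Proposition~\ref{basic}(a) identifies $a_0,a_n$ as the extrema of $\mL$, so the chain is maximal. To identify $\{a_0,\dots,a_n\}$ with $C_\prec$, I would check that the $k$-th vertex of the simplex~(\ref{simplex}) is the indicator $\bm 1_{J_k}=v_{a_k}$. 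The converse just reverses this construction: a maximal chain in $\mL$ transports via $\iota$ to a saturated flag of order ideals, and reading off the element added at each step yields a linearization of $<$.

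For (a), I would invoke the classical Stanley triangulation of the order polytope. The underlying argument is that the hyperplane arrangement $\{x_p=x_q:p\neq q\in P\}$ on $O(P,<)$ has open chambers corresponding bijectively to strict total orders on the coordinates, and the defining inequalities of $O(P,<)$ force such an order to extend $<$, hence to be a linearization $\prec$. Closing up, the chambers are precisely the simplices $O(P,\prec)$, which therefore cover $O(P,<)$, have pairwise disjoint interiors, and meet in lower-dimensional faces obtained by collapsing some strict inequalities $x_{p_i}<x_{p_j}$ to equalities. The careful verification that this assembles into a polyhedral triangulation is the main obstacle here, and I would cite Stanley rather than reproducing it in full.

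For (c), the facets of $O(P,\prec)$ not lying in $\partial O(P,<)$ are exactly those obtained by collapsing a single strict inequality $x_{p_i}<x_{p_{i+1}}$ to equality, and such a facet is shared with $O(P,\prec')$ iff $\prec'$ is obtained from $\prec$ by swapping the adjacent pair $p_i,p_{i+1}$; this swap is itself a linearization iff $p_i$ and $p_{i+1}$ are $<$-incomparable. Under (b) the swap alters only the $i$-th element of the chain: $C_\prec$ contains $a=\iota^{-1}(J_{i-1}\cup\{p_i\})$ while $C_{\prec'}$ contains $b=\iota^{-1}(J_{i-1}\cup\{p_{i+1}\})$. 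Since $\iota$ is a lattice isomorphism, $a\wedge b=\iota^{-1}(J_{i-1})$ and $a\vee b=\iota^{-1}(J_{i-1}\cup\{p_i,p_{i+1}\})$, so $\{a,b\}$ is a diamond pair. Conversely, any diamond pair $\{a,b\}$ has $\iota(a)\setminus\iota(a\wedge b)=\{p\}$ and $\iota(b)\setminus\iota(a\wedge b)=\{q\}$ for distinct $<$-incomparable $p,q$, and any linearization listing the elements of $\iota(a\wedge b)$ first, then $p,q$ in one of the two orders, then a common linearization of the remaining elements of $P$, yields the pair $\prec,\prec'$ witnessing the facet adjacency.
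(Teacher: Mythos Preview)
Your treatment of parts (a) and (b) matches the paper's: part (b) via the dictionary between linearizations and saturated flags of order ideals (using Proposition~\ref{basic}), and part (a) via the hyperplane arrangement picture. The paper is slightly more self-contained for (a), arguing directly that the simplices cover $O(P,<)$, have pairwise disjoint interiors (points with distinct coordinates lie in a unique simplex), and are each the convex hull of vertices of $O(P,<)$; but this is the same argument you are citing from Stanley.

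For part (c) your route differs from the paper's. The paper observes in one line that two $|P|$-simplices share a facet iff their vertex sets have symmetric difference of size~2, and then uses a height argument: if maximal chains $C_\prec$ and $C_{\prec'}$ differ in exactly $\{a,b\}$, then $|a|=|b|$ and $a\wedge b$, $a\vee b$ lie at the adjacent heights, so $\{a,b\}$ is a diamond pair. Your approach via the interior facet hyperplanes $x_{p_i}=x_{p_{i+1}}$ and adjacent transpositions is more explicitly combinatorial and yields more information (it names the shared facet and connects to the weak Bruhat picture in the subsequent Remark), at the cost of a little more setup.

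There is, however, a small misdirection in your ``conversely'' paragraph. You show that every diamond pair \emph{arises} as $C_\prec\triangle C_{\prec'}$ for some adjacent pair of linearizations; but the converse of the iff asks you to show that \emph{given} $\prec,\prec'$ with $C_\prec\triangle C_{\prec'}$ a diamond pair, the simplices share a facet. The fix is immediate: a diamond pair has two elements, so the two simplices share $|P|$ of their $|P|+1$ vertices and hence share the facet those vertices span. Alternatively, note that a size-two symmetric difference of maximal chains forces agreement at every height but one, so $\prec'$ must in fact be an adjacent transposition of $\prec$, and your forward analysis then applies.
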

\begin{proof}
Every $x\in O(P,<)$ satisfies~\eqref{simplex} for some $\{p_1,\dots,p_{|P|}\}=P$ where the order $\prec$ given by $p_1\prec\dots\prec p_{|P|}$ is a linearization of $<$. This shows that the union of these simplices is indeed $O(P,<)$. It is also evident that if $x\in O(P,<)$ has pairwise distinct coordinates, then it is only contained in a single simplex $O(P,\prec)$. We see that the interiors of the simplices are pairwise disjoint. This means that they indeed form a triangulation, since each of them is a convex hull of a set vertices of $O(P,<)$. This proves part (a).

For part (b), let $\prec$ be given by $p_1\prec\dots\prec p_{|P|}$. Then $\mathcal J(P,\prec)$ consists of the sets $\{p_1,\dots,p_i\}$, these sets form a chain with respect to inclusion. This means that $\iota^{-1}(\mathcal J(P,\prec))=C_{\prec}$ is indeed a chain. The maximality of this chain follows from part (c) of Proposition~\ref{basic}. 

Conversely, consider a maximal chain $C\subset\mL$ consisting of $c_0<\dots<c_{|P|}$. Every difference $\iota(c_i)\backslash\iota(c_{i-1})$ consists of a single element $p_i$ by part (b) of Proposition~\ref{basic}. We then have $\iota(c_i)=\{p_1,\dots,p_i\}$ for all $i$. Let $\prec$ be the linear order given by $p_1\prec\dots\prec p_{|P|}$. Then $\mathcal J(P,\prec)$ consists of the sets $\{p_1,\dots,p_i\}$ so that \[C_\prec=\iota^{-1}(\mathcal J(P,\prec))=C\] as desired.

For part (c), $O(P,\prec)$ and $O(P,\prec')$ have a common facet if and only if the symmetric difference of their vertex sets has size 2. Now, whenever the symmetric difference of maximal chains $C_\prec$ and $C_{\prec'}$ consists of two elements $a$ and $b$, we have $|a|=|b|$, $|a\land b|=|a|-1$ and $|a\lor b|=|a|+1$, i.e.\ $\{a,b\}$ is a diamond pair.
\end{proof}

\begin{example}\label{extremal}
Before we continue, let us explain how parts (a) and (b) of Theorem~\ref{main} work in the two extremal cases when $F$ is all of $K$ and the apex (minimal face) of $K$.

First suppose that $F$ is the apex. Then we have $\initial_w I^h=I^h$ which means that $I^h$ is homogeneous with respect to the grading on $R$ that equals $w_a$ on $X_a$. It is not hard to check (see proof of Proposition~\ref{hibigradings}) that every such $w$ is obtained by choosing a linear function $f$ on $\bR^P$ and setting $w_a=f(v_a)$. Consequently, the regular subdivision $\Theta_{O(P,<)}(\psi(w))$ must consist of the single part $O(P,<)$, this is in accordance with part (a) of the theorem. The variety $Y^F$ is the Hibi variety $H$ which is indeed the toric variety of the polytope $O(P,<)$ as required by part (b).

Now let $F=K$. Then we have $\initial_w I^h=I^m$. One may then check (see proof of Theorem~\ref{main} below) that the subdivision $\Theta_{O(P,<)}(\psi(w))$ is the triangulation into the simplices $O(P,\prec)$ given by linearizations of $<$, this provides part (a). Now for a maximal chain $C$ in $\mL$ consider the ideal $I_C$ generated by the elements $X_a$ with $a\notin C$. The ideal $I^m$ is spanned by monomials $X_{a_1}\dots X_{a_N}$ such that some $a_i$ and $a_j$ are incomparable. Therefore, it is the intersection of the ideals $I_C$ with $C$ ranging over all maximal chains and these ideals cut out the irreducible components of $Y^F$. However, for a maximal chain $C_\prec$ corresponding to the linearization $\prec$ the ideal $I_{C_\prec}$ is seen to cut out the variety $H(\mM_\prec)\subset\bP$ (note that $\mM_\prec=C_\prec$ is a linearly ordered lattice with a trivial Hibi ideal). This variety is simply a $|P|$-dimensional projective space, so indeed the toric variety of the simplex.
\end{example}

\begin{lemma}\label{convex}
Let $Q$ be any convex polytope and let $Q_1,\dots,Q_n$ form a polyhedral subdivision of $Q$. Let $f:Q\to\bR$ be linear on every $Q_i$. Suppose that $f$ is convex on every union $Q_i\cup Q_j$ where $Q_i$ and $Q_j$ have a common facet. Then $f$ is convex on all of $Q$.
\end{lemma}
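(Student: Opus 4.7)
The plan is to reduce the claim to a one-dimensional convexity statement along a generic line segment in $Q$ and then apply a standard criterion. First I would observe that $f$ is continuous on $Q$: it is linear on each closed cell $Q_i$, and since the $Q_i$ form a polyhedral subdivision the values on common lower-dimensional faces are forced to agree.

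Next, given arbitrary $x_0,y_0\in Q$, I would approximate them by points $x,y\in Q$ lying in the relative interiors of some top-dimensional cells, chosen so that the segment $[x,y]$ meets the subdivision cleanly: it avoids every face of codimension $\ge 2$, and crosses each facet it does encounter transversally and in its relative interior. A straightforward dimension count shows that for each codimension-$k$ face $F$ with $k\ge 2$, the pairs $(x,y)\in Q\times Q$ whose segment meets $F$ form a set of dimension at most $2\dim Q-k+1$, which is strictly less than $\dim(Q\times Q)$; hence the union of these ``bad'' sets (over the finitely many faces of codimension $\ge 2$ and over the facet-tangency loci) lies in a proper algebraic subset of $Q\times Q$, so a generic perturbation satisfies all the requirements. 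For such $(x,y)$ the segment passes in order through top-dimensional cells $Q_{j_1},\dots,Q_{j_m}$ at parameters $0=t_0<t_1<\dots<t_m=1$, with $Q_{j_k}$ and $Q_{j_{k+1}}$ sharing a facet crossed transversally at $t_k$.

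Setting $g(t)=f((1-t)x+ty)$, I observe that over each subinterval $[t_{k-1},t_{k+1}]$ the segment is contained in $Q_{j_k}\cup Q_{j_{k+1}}$, so the hypothesis supplies convexity of $g$ on that subinterval. A continuous piecewise linear function on $[0,1]$ that is convex in a neighbourhood of every breakpoint is convex on all of $[0,1]$, since its one-sided slopes are nondecreasing at each breakpoint and therefore monotone globally. Thus $g$ is convex, which yields the required inequality for the perturbed endpoints; continuity of $f$ then lets me pass to the limit $(x,y)\to(x_0,y_0)$ and conclude convexity on $Q$. The main obstacle is the perturbation step: one must verify that the perturbations can be chosen while staying inside $Q$ (which works because $Q$ is full-dimensional and every point of $Q$ lies in the closure of some top-dimensional cell $Q_i$) and that they simultaneously avoid the finitely many lower-dimensional faces of the subdivision; everything else is a routine 1D convexity argument.
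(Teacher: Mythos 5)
Your proposal is correct and follows essentially the same route as the paper's proof: reduce by a density/continuity argument to segments avoiding all faces of codimension at least $2$, use the hypothesis on adjacent cells to get convexity of the restriction near each breakpoint, and conclude via the one-dimensional fact that a piecewise linear function with locally nondecreasing slopes is globally convex. The only difference is that you spell out the genericity dimension count and the 1D slope argument more explicitly than the paper does.
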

\begin{proof}
We are to show that for every pair of points $a,b\in Q$ all points of the segment $[a\times f(a),b\times f(b)]\subset Q\times\bR$ lie on or below the graph of $f$. Evidently, this condition is closed: it is sufficient to verify it for a set of pairs $(a,b)$ that is dense in $Q^2$. Therefore, we may assume that the segment $[a,b]$ does not intersect any face of codimension greater than 1 of any $Q_i$.

For such $a$ and $b$ let $\{a_0,\dots,a_k\}$ consist of $a$, $b$ and all points where $[a,b]$ intersects a facet of some $Q_i$. The points $a_0,\dots,a_k$ are ordered by distance from $a$, so that $a_0=a$ and $a_k=b$. We see that $f$ is linear on every segment $[a_{i-1},a_i]$ and is convex on every union $[a_{i-1},a_i]\cup [a_i,a_{i+1}]$. However, the lemma is obvious when $Q$ has dimension 1 and by applying the lemma to the segment $[a,b]$ we see that $f$ is convex on $[a,b]$ which implies the desired condition.
\end{proof}

We now fix a face $F$ of $K$ and a point $w\in C(I^h,I^F)$ for the rest of this section and prove the first statement in part (a) of Theorem~\ref{main}.
\begin{proposition}
The regular subdivision $\Theta_{O(P,<)}(\psi(w))$ is composed of polytopes of the form $O(P,<')$ with $<'$ stronger than $<$.
\end{proposition}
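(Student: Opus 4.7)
The plan is to compute the concave upper envelope $\bar{c}: O(P,<) \to \bR$ of the lifted data $\{(v_a, w_a) : a \in \mL\}$ explicitly as a piecewise linear function on the triangulation from Proposition~\ref{maxchains}(a), and then identify its maximal domains of linearity as order polytopes of refinements of $<$.

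First I would introduce $h: O(P,<) \to \bR$, the unique function that is affine on each simplex $O(P,\prec)$ of the triangulation and satisfies $h(v_a) = w_a$ at every vertex; this is well-defined since two adjacent simplices agree on the vertices of their shared facet. Concavity of $h$ then follows from Lemma~\ref{convex} applied to $-h$: by Proposition~\ref{maxchains}(c), any pair of simplices sharing a facet corresponds to a diamond pair $\{a,b\} \subset \mL$, and using the identity $v_a + v_b = v_{a \wedge b} + v_{a \vee b}$ (which holds because $\iota$ carries $\wedge, \vee$ to $\cap, \cup$ of indicator functions) the concavity check on the union of the pair reduces to evaluating the affine extension from one simplex at the remaining vertex of the other and comparing with $w_b$. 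This comparison is precisely the diamond inequality $w_a + w_b \leq w_{a \wedge b} + w_{a \vee b}$, which holds for $w \in K$ by Theorem~\ref{hibimonfacets}.

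Since $\bar c$ is the smallest concave function with $\bar c(v_a) = w_a$ at every vertex, and since $h$ is such a concave function (so $h \geq \bar c$) while on each simplex $h$ is a valid convex combination of vertex heights (so $h \leq \bar c$), we conclude $h = \bar c$. The parts of $\Theta_{O(P,<)}(\psi(w))$ are thus the maximal domains of linearity of $h$, and two adjacent simplices lie in the same part precisely when the corresponding diamond inequality is tight.

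To finish, fix a part $Q$ together with the affine function $\ell$ realizing $\bar c$ on $Q$, and set $V_Q = \{a \in \mL : \ell(v_a) = w_a\}$ so that $Q = \operatorname{conv}\{v_a : a \in V_Q\}$. For any diamond pair $\{a,b\}$ with $a, b \in V_Q$, linearity of $\ell$ gives the chain of (in)equalities $\ell(v_{a \wedge b}) + \ell(v_{a \vee b}) = \ell(v_a) + \ell(v_b) = w_a + w_b \leq w_{a \wedge b} + w_{a \vee b} \leq \ell(v_{a \wedge b}) + \ell(v_{a \vee b})$, forcing $a \wedge b, a \vee b \in V_Q$. By induction on $|a \vee b| - |a \wedge b|$, using that the diamond inequalities imply the full supermodular inequality $w_a + w_b \leq w_{a \wedge b} + w_{a \vee b}$ for arbitrary pairs in $\mL$, this diamond-closure promotes to sublattice closure of $V_Q$. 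Birkhoff's theorem then identifies $V_Q$ with $\iota^{-1}(\mathcal{J}(P, <'))$ for some refinement $<'$ of $<$, so that the vertices of $Q$ are exactly the vertices of $O(P, <')$ and $Q = O(P, <')$ as required. The main obstacle is this last order-theoretic step: promoting closure under diamond pairs to full sublattice closure, and verifying that the resulting sublattice has the specific form $\iota^{-1}(\mathcal{J}(P, <'))$ cut out by a refined partial order on $P$. Once this is in hand, the geometric and convex-analytic ingredients are routine applications of Lemma~\ref{convex} and Theorem~\ref{hibimonfacets}.
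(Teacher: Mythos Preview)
Your core argument matches the paper's: define the piecewise-linear function on the triangulation by the simplices $O(P,\prec)$, then use Lemma~\ref{convex} together with Proposition~\ref{maxchains}(c) and Theorem~\ref{hibimonfacets} to show it is concave, hence equal to the envelope defining $\Theta_{O(P,<)}(\psi(w))$. So each maximal cell is a union of simplices $O(P,\prec)$.

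Where you diverge is in the last step. The paper concludes with a one-line geometric observation: any convex polytope that is a union of simplices $O(P,\prec)$ has each facet contained in a facet hyperplane of some $O(P,\prec)$, hence of the form $x_p\le x_q$, $x_p\ge 0$, or $x_p\le 1$; such a polytope is automatically $O(P,<')$ for some refinement $<'$. Your alternative route --- show $V_Q$ is a sublattice and then invoke Birkhoff --- is viable but, as you note, incomplete: Birkhoff only gives $V_Q\cong\mathcal J(P(V_Q),<'')$ for \emph{some} poset $P(V_Q)$, not for $P$ itself with a refined order. The missing ingredient is that $V_Q$ contains a maximal chain of $\mL$ (since $Q$, being full-dimensional, contains some $O(P,\prec)$ and hence $C_\prec\subset V_Q$). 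With this in hand one can define $p<'q$ iff every $J\in\iota(V_Q)$ containing $q$ also contains $p$; the maximal chain guarantees this is a partial order, and closure of $\iota(V_Q)$ under intersections and unions then yields $\iota(V_Q)=\mathcal J(P,<')$. So your obstacle is surmountable, but the paper's facet argument sidesteps it entirely and is shorter.
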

\begin{proof}
Let $f$ be the convex function on $O(P,<)$ that has $f(v_a)=\psi(w)_{v_a}=w_a$ for all $a\in\mL$ and is linear on every polytope in $\Theta_{O(P,<)}(\psi(w))$ (as discussed in Subsection~\ref{secondary}). If a convex polytope is a union of simplices of the form $O(P,\prec)$ with $\prec$ a linearization of $<$, then all of its facets are given by inequalities of the forms $x_p\le x_q$, $x_p\ge 0$ and $x_p\le 1$. This means that any such polytope is necessarily an order polytope and to prove the proposition it suffices to show that $f$ is linear on every simplex $O(P,\prec)$. 

There exists a unique function $g$ on $O(P,<)$ such that $g(v_a)=w_a$ for all $a\in\mL$ and $g$ is linear on every simplex $O(P,\prec)$. We are to show that $f$ and $g$ coincide or, equivalently, that $g$ is convex. By Lemma~\ref{convex} it suffices to check that $g$ is convex on every union of two simplices $O(P,\prec)$ and $O(P,\prec')$ with a common facet.

Thus we have two simplices $O(P,\prec)$ and $O(P,\prec')$ with a common facet and a function $g$ which is linear on each of them. Let $h$ and $h'$ be linear functions on $\bR^P$ such that $h$ and $g$ coincide on $O(P,\prec)$ while $h'$ and $g$ coincide on $O(P,\prec')$. Let $v_a$ be the vertex of $O(P,\prec)$ not contained in $O(P,\prec')$ and $v_{a'}$ be the vertex of $O(P,\prec')$ not contained in $O(P,\prec)$. The convexity of $g$ on $O(P,\prec)\cup O(P,\prec')$ is equivalent to either of the inequalities $h(v_{a'})\ge h'(v_{a'})=w_{a'}$ and $h'(v_a)\ge h(v_a)=w_a$. 

We have seen that $\{a,a'\}$ is a diamond pair and that $a\land a'$ and $a\lor a'$ are contained in $C_\prec\cap C_{\prec'}$. The latter means that the points $v_{a\land a'}$ and $v_{a\lor a'}$ are vertices of both $O(P,\prec)$ and $O(P,\prec')$. Now observe that $v_a+v_{a'}=v_{a\land a'}+v_{a\lor a'}$. We deduce 
\begin{equation}\label{ineq}
h(v_{a'})=h(v_{a\land a'})+h(v_{a\lor a'})-h(v_a)=w_{a\land a'}+w_{a\lor a'}-w_a\ge w_{a'}
\end{equation}
where the last inequality follows from Theorem~\ref{hibimonfacets}.
\end{proof}

\begin{remark}
Consider the graph $\Gamma$ the vertices in which are linearizations of $<$ with $\prec$ and $\prec'$ adjacent when $O(P,\prec)$ and $O(P,\prec')$ have a common facet. This is sometimes referred to as the \emph{adjacency graph} of the triangulation formed by the simplices $O(P,\prec)$. Let $\prec$ and $\prec'$ be linearizations given by $p_1\prec\dots\prec p_{|P|}$ and $p'_1\prec'\dots\prec' p'_{|P|}$. It is not hard to see that $O(P,\prec)$ and $O(P,\prec')$ have a common facet if and only if the tuples $(p_1,\dots,p_{|P|})$ and $(p'_1,\dots,p'_{|P|})$ are obtained from each other by exchanging two consecutive elements. If one views these tuples as permutations of the set $P$, then the above condition means that the two permutations are obtained from each other via multiplication by an elementary transposition. This shows that $\Gamma$ is a full subgraph of the Hasse diagram of the weak Bruhat order on permutations. The latter graph can also be represented as the 1-skeleton of the permutahedron. This observation is closely related to Remark~\ref{permutahedra}.
\end{remark}

We have obtained orders $<_1,\dots,<_m$ on $P$ such that \[\Theta_{O(P,<)}(\psi(w))=\{O(P,<_1),\dots,O(P,<_m)\}.\] Our next goal is to prove that $Y_F$ is the union of the Hibi varieties $H(\mM_{<_i})\subset\bP$. This will imply part (b) and will also complete the proof of part (a), since part (b) is seen to imply that the subdivision is independent of $w\in C(I^h,I^F)$. Because $H(\mM_{<_i})$ is a Hibi variety, its vanishing ideal $I_i$ is described as follows. $I_i$ is generated by the expressions $d(a,b)$ for incomparable $a,b\in\mM_{<_i}$ and the variables $X_c$ for $c\notin\mM_{<_i}$. Denote $\widetilde I^F=\bigcap_i I_i$, we are to prove that $\widetilde I^F=I^F$.

For each $i\in[1,m]$ consider a polynomial ring $S_i=\bC[\{z_{p,i},p\in P\},t_i]$. Let $\varphi_i^h$ be the map from $R$ to $S_i$ with \[\varphi_i^h(X_a)=t_i\prod_{p\in\iota(a)}z_{p,i}.\] By Proposition~\ref{kernel}, the kernel of $\varphi_i^h$ is $I^h$. Consequently, the kernel of the map $\varphi^h=\bigoplus_i\varphi_i^h$ from $R$ to $S^F=\bigoplus_i S_i$ is also $I^h$. 

Furthermore, for $i\in[1,m]$ let $\varphi_i^F$ be the map from $R$ to $S_i$ with 
\begin{equation}\label{phiF}
\varphi_i^F(X_a)=t_i\prod_{p\in\iota(a)}z_{p,i}
\end{equation}
if $v_a\in O(P,<_i)$ (equivalently, $a\in\mM_{<_i}$) and $\varphi_i^F(X_a)=0$ otherwise. In view of Proposition~\ref{kernel} and the above description of $I_i$, the kernel of $\varphi_i$ is $I_i$. Consequently, the kernel of the map $\varphi^F=\bigoplus_i\varphi_i^F$ from $R$ to $S^F$ is $\widetilde I^F$.

\begin{proposition}\label{contains}
We have $I^F\subset\widetilde  I^F$.
\end{proposition}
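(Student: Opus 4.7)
The plan is to show that $\varphi^F(p) = 0$ for every $p \in I^F$, since this gives $p \in \widetilde I^F = \ker\varphi^F$. Because $I^F = \initial_w I^h$ is by definition the linear span of the elements $\initial_w q$ with $q \in I^h$, it suffices to prove $\varphi_i^F(\initial_w q) = 0$ for every $i$ and every $q \in I^h$. The key input will be the identity $\varphi_i^h(q) = 0$ from Proposition~\ref{kernel}, and the strategy is to realize $\varphi_i^F(\initial_w q)$ as a particular graded component of this zero element under an auxiliary $\bR$-grading on $S_i$.

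To set up this grading, let $f : O(P,<) \to \bR$ be the convex piecewise linear function associated with the regular subdivision $\Theta_{O(P,<)}(\psi(w))$ as in Subsection~\ref{secondary}, and let $h_i(v) = (u_i, v) + b_i$ be the unique affine extension of $f|_{O(P,<_i^F)}$ to all of $\bR^P$. Define $\grad_i(z_{p,i}) = (u_i)_p$ and $\grad_i(t_i) = b_i$, so that $\grad_i(\varphi_i^h(X_a)) = h_i(v_a)$ and, more generally, $\grad_i(\varphi_i^h(\mathbf X^{d^j})) = \sum_a d^j_a h_i(v_a)$. The heart of the argument is then the equality-case description
\[
h_i(v_a) \ge w_a \text{ for every } a \in \mL, \text{ with equality if and only if } a \in \mM_{<_i^F},
\]
which, summed against an exponent vector $d^j$, yields $\grad_i(\varphi_i^h(\mathbf X^{d^j})) \ge (w, d^j) \ge m$, with both inequalities tight precisely when $(w, d^j) = m$ and $\mathrm{supp}(d^j) \subset \mM_{<_i^F}$, where $m$ denotes the $w$-initial grade of $q$.

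Once this key inequality is in hand, the finish is routine bookkeeping. Writing $q = \sum_j c_j \mathbf X^{d^j}$, the $\grad_i$-homogeneous grade-$m$ component of $\varphi_i^h(q) = 0$ is the sum of those $c_j \varphi_i^h(\mathbf X^{d^j})$ over $j$ satisfying both tightness conditions, which by construction of $\varphi_i^F$ coincides term-by-term with $\varphi_i^F(\initial_w q)$. The equality-case description itself will combine two ingredients. First, writing $f$ as the minimum $\min_k h_k$ of the affine functions corresponding to the individual parts (which is what ``convex'' means in the paper's sense) forces $h_i \ge f$ with equality exactly on $O(P,<_i^F)$. Second, $f(v_a) = w_a$ holds for every $a \in \mL$ because any $\{0,1\}$-point lying in an order polytope is automatically one of its vertices, so each $v_a$ necessarily appears as a vertex of some part of the subdivision. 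Combining these two facts yields strict inequality for $a \notin \mM_{<_i^F}$ and completes the picture. I expect this last step, grounding the equality-case description in the geometry of the regular subdivision, to be the main subtlety; everything else is a purely formal manipulation of gradings.
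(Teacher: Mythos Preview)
Your proposal is correct and follows essentially the same approach as the paper: both arguments put a grading on $S_i$ (the paper on the direct sum $S^F$) induced by the affine pieces of the piecewise-linear function $f$, and then identify $\varphi_i^F(\initial_w q)$ as the degree-$m$ homogeneous component of $\varphi_i^h(q)=0$. The only noteworthy difference is cosmetic: the paper first normalizes to $w_{a_0}=0$ so that the pieces $h_i$ become linear (allowing $\grad_S t_i=0$), whereas you keep the affine constant $b_i$ and absorb it into $\grad_i(t_i)=b_i$, which avoids the final reduction step. Your identification of the equality case $h_i(v_a)=w_a \Leftrightarrow a\in\mM_{<_i^F}$ is exactly what the paper uses (it asserts the strict inequality ``by convexity of $f$'' with no further comment), and your justification that $f(v_a)=w_a$ for every $a$ is correct, if slightly more roundabout than simply noting that each $v_a\times w_a$ is a vertex of the lifted polyhedron $T$.
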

\begin{proof}
First suppose that $w_{a_0}=0$ for the minimal element $a_0$ of $\mL$. Let $f$ be the convex function with $f(v_a)=w_a$ that is linear on every $O(P,<_i)$. Since $0=v_{a_0}$ is a vertex of every $O(P,<_i)$ and $f(v_{a_0})=0$, we have vectors $\alpha_i\in\bR^P$ such that $f(x)=(\alpha_i,x)$ for $x\in O(P,<_i)$.

Consider the $\bR$-grading $\grad$ on $R$ given by $\grad X_a=w_a$. If we decompose a polynomial $r\in R$ into a sum of nonzero $\grad$-homogeneous polynomials, then the summand with the least grading will equal $\initial_w r$. We also define an $\bR$-grading $\grad_S$ on $S^F$ by setting $\grad_S z_{p,i}=(\alpha_i)_p$ and $\grad_S t_i=0$. For any $a\in\mL$ and any $i\in[1,m]$ we have $\grad_S\varphi_i^h(X_a)=(\alpha_i,v_a)$, in particular, $\grad_S\varphi_i^h(X_a)=w_a$ when $v_a\in O(P,<_i)$. Furthermore, the convexity of $f$ implies that if $v_a\in O(P,<_i)$ and $v_a\notin O(P,<_j)$, then \[w_a=(\alpha_i,v_a)<(\alpha_j,v_a)=\grad_S\varphi_j^h(X_a).\] We see that for any $a$ and $i$ either $\grad_S\varphi_i^h(X_a)=w_a$ and $\varphi_i^F(X_a)=\varphi_i^h(X_a)$ or $\grad_S\varphi_i^h(X_a)>w_a$ and $\varphi_i^F(X_a)=0$. In other words, one could say that $\varphi^F(X_a)$ is the initial part of $\varphi^h(X_a)$ with respect to $\grad_S$. This statement immediately generalizes to any monomial $M$ in the $X_a$: for any $a$ and $i$ either $\grad_S\varphi_i^h(M)=\grad M$ and $\varphi_i^F(M)=\varphi_i^h(M)$ or $\grad_S\varphi_i^h(M)>\grad M$ and $\varphi_i^F(M)=0$. Consequently, the $\grad_S$-homogeneous component of $\varphi^h(M)$ of grading $\grad M$ is equal to $\varphi^F(M)$ while the $\grad_S$-homogeneous component of any grading $\beta<\grad M$ is 0. 

Consider a polynomial $r\in I^h$, denote $\grad(\initial_w r)=\gamma$. Since $\varphi^h(r)=0$, the $\grad_S$-homogeneous component of $\varphi^h(r)$ of grading $\gamma$ is also zero. However, the above shows that the only monomials $M$ occurring in $r$ for which $\varphi^h(M)$ can have a nonzero $\grad_S$-homogeneous component of grading $\gamma$ are those occurring in $\initial_w r$. As a result, the $\grad_S$-homogeneous component of $\varphi^h(r)$ of grading $\gamma$ is equal to $\varphi^F(\initial_w r)$. Hence $\varphi^F(\initial_w r)=0$ and $I^F=\initial_w I^h$ is contained in the kernel of $\varphi^F$ which is $\widetilde I^F$.

Now, if $w_{a_0}\neq 0$, consider $w'$ given by $w'_a=w_a-w_{a_0}$. Then $w'_{a_0}=0$ and $\Theta_{O(P,<)}(\psi(w))=\Theta_{O(P,<)}(\psi(w'))$ which reduces the proposition to the above case.
\end{proof}

To prove that the ideals coincide we will need the following general property of polyhedral subdivisions.
\begin{lemma}\label{samesum}
Consider equisized tuples $(a_1,\dots,a_k)$ and $(b_1,\dots,b_k)$ of elements of $\mL$ such that $\sum v_{a_j}=\sum v_{b_j}$. Suppose that for some $i_1,i_2\in[1,m]$ all $v_{a_j}$ are vertices of $O(P,<_{i_1})$ and all $v_{b_j}$ are vertices of $O(P,<_{i_2})$. Then all $v_{a_j}$ and all $v_{b_j}$ are contained in $O(P,<_{i_1})\cap O(P,<_{i_2})$.
\end{lemma}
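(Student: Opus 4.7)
The plan is to exploit the point $p = \tfrac{1}{k}\sum v_{a_j} = \tfrac{1}{k}\sum v_{b_j}$, viewed as a single point of $O(P,<)$ that admits two convex-combination descriptions. Since all $v_{a_j}$ lie in the convex set $O(P,<_{i_1})$, the point $p$ lies in $O(P,<_{i_1})$; similarly $p$ lies in $O(P,<_{i_2})$. Hence $p$ lies in the intersection $F := O(P,<_{i_1}) \cap O(P,<_{i_2})$.

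Next I would invoke the fact that $\{O(P,<_1),\dots,O(P,<_m)\}$ is a polyhedral subdivision of $O(P,<)$, which means that the intersection $F$ of any two cells is a common face of both. So $F$ is a face of the polytope $O(P,<_{i_1})$ and of the polytope $O(P,<_{i_2})$.

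The key standard fact I would then apply is that if a convex combination $\sum \lambda_j x_j$ (with $\lambda_j > 0$) of points $x_j$ of a polytope $Q$ lies on a face $F$ of $Q$, then every $x_j$ lies on $F$. Indeed, write $F = Q \cap \{\ell = 0\}$ with $\ell \le 0$ on $Q$; then $\sum \lambda_j \ell(x_j) = 0$ with all summands nonpositive forces each $\ell(x_j) = 0$. Applying this to $Q = O(P,<_{i_1})$ with the equal weights $\lambda_j = 1/k$ and points $v_{a_j}$, and symmetrically to $Q = O(P,<_{i_2})$ with the $v_{b_j}$, we conclude that every $v_{a_j}$ and every $v_{b_j}$ lies in $F \subset O(P,<_{i_1}) \cap O(P,<_{i_2})$, as required.

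There is no real obstacle here; the only point worth double-checking is that $F$ really is a face of both cells, which is immediate from the definition of polyhedral subdivision used in Subsection~\ref{secondary}. The argument is purely convex-geometric and does not use any specific feature of order polytopes, so the same reasoning would work for any regular subdivision.
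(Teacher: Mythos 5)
Your proof is correct and follows essentially the same route as the paper: locate $p=\frac1k\sum v_{a_j}$ in a common face of the two cells, then use a supporting functional to force every $v_{a_j}$ (and $v_{b_j}$) into that face. The only cosmetic difference is that you take the face to be the full intersection $O(P,<_{i_1})\cap O(P,<_{i_2})$, whereas the paper takes the minimal face of the polyhedral complex containing $p$; both are legitimate.
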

\begin{proof}
The point $x=\sum v_{a_j}/k=\sum v_{b_j}/k$ is contained in $O(P,<_{i_1})\cap O(P,<_{i_2})$ as a convex linear combination of vertices of each of the polytopes. Recall that all faces of all $O(P,<_i)$ form a polyhedral complex, let $Q$ be the minimal polytope in this complex which contains $x$. Evidently, $Q$ is a face of both $O(P,<_{i_1})$ and $O(P,<_{i_2})$. We claim that all $v_{a_j}$ and all $v_{b_j}$ lie in $Q$. Indeed, there exists a linear function $h$ on $\bR^P$ such that $h(y)=0$ for any $y\in Q$ and $h(y)>0$ for any other $y\in O(P,<_{i_1})$. In particular, $h(x)=0$ but $h(x)$ is also the average of the of the nonnegative values $h(v_{a_j})$, hence all $h(v_{a_j})=0$. The proof that $Q$ contains all $v_{b_j}$ is similar.
\end{proof}

\begin{proposition}
We have $I^F=\widetilde I^F$.
\end{proposition}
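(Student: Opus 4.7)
Since Proposition~\ref{contains} already supplies $I^F\subset\widetilde I^F$, the plan is to prove that $R/I^F$ and $R/\widetilde I^F$ have the same Hilbert function. For $R/I^F$ this is straightforward: Gr\"obner degenerations preserve Hilbert functions (Theorem~\ref{flatfamily}), and by Proposition~\ref{toricvariety} together with normality of $O(P,<)$,
\[
\dim(R/I^F)_k \;=\; \dim(R/I^h)_k \;=\; |kO(P,<)\cap\bZ^P|.
\]
What remains is to establish the same formula for $\dim(R/\widetilde I^F)_k=\dim\varphi^F(R_k)$.

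The strategy is to parametrise each degree-$k$ monomial $M=X_{a_1}\cdots X_{a_k}$ of $R$ by two pieces of data: its \emph{vertex sum} $x(M):=v_{a_1}+\dots+v_{a_k}\in kO(P,<)\cap\bZ^P$ and its \emph{support} $I(M):=\{i:\varphi_i^F(M)\ne 0\}$. Formula~\eqref{phiF} identifies $I(M)$ with the set of $i$ for which every $v_{a_j}$ lies in $O(P,<_i^F)$, and in that case the component $\varphi_i^F(M)$ equals $t_i^k\prod_p z_{p,i}^{x(M)_p}$; thus $\varphi^F(M)$ is completely recovered from the pair $(x(M),I(M))$. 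The crux is to prove that whenever $M,M'$ share the same vertex sum and both have nonempty support, then $I(M)=I(M')$, and that this common support equals $I(x):=\{i:x\in kO(P,<_i^F)\}$. The first claim is exactly what Lemma~\ref{samesum} yields: given $i_1\in I(M)$ and $i_2\in I(M')$, the lemma places every vertex occurring in either monomial inside $O(P,<_{i_1}^F)\cap O(P,<_{i_2}^F)$, forcing $i_2\in I(M)$ and $i_1\in I(M')$. The inclusion $I(M)\subset I(x)$ is immediate; for the reverse, normality of each $O(P,<_i^F)$ lets one write any $x\in kO(P,<_i^F)\cap\bZ^P$ as a sum of $k$ vertices of that polytope, exhibiting an $M'$ with $x(M')=x$ and $i\in I(M')$, which by the rigidity just established pins down the full support.

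Assembling the pieces, for each $x\in kO(P,<)\cap\bZ^P$ the image $\varphi^F(M)$, as $M$ ranges over monomials with $x(M)=x$, is either zero or a single tuple $\tau(x)$ supported exactly on $I(x)$; $\tau(x)$ is actually attained because the $O(P,<_i^F)$ cover $O(P,<)$, so $I(x)$ is nonempty and the normality argument above produces the required $M$. The tuples $\tau(x)$ for distinct $x$ are linearly independent, since on any shared nonzero component the exponents of the $z_{p,i}$ already encode $x$, while different supports give disjoint contributions. This yields $\dim\varphi^F(R_k)=|kO(P,<)\cap\bZ^P|$ and forces $I^F=\widetilde I^F$. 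The main obstacle to anticipate is precisely the rigidity step: without Lemma~\ref{samesum}, two monomials with the same vertex sum could a priori produce $\varphi^F$-images supported on different subsets of indices, which would inflate $\dim\varphi^F(R_k)$ above the target and break the Hilbert function comparison.
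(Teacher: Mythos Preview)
Your proof is correct and follows essentially the same approach as the paper: a Hilbert function comparison reducing to the claim that $\dim\varphi^F(R)_k$ equals the number of lattice points in $kO(P,<)$, with Lemma~\ref{samesum} supplying the key rigidity step. The one notable difference is in the existence part: where the paper uses a lattice-theoretic straightening argument (iteratively replacing incomparable pairs $a_j,a_k$ by $a_j\wedge a_k,\,a_j\vee a_k$) to place each vertex sum inside a single simplex $O(P,\prec)\subset O(P,<_i^F)$, you instead invoke normality of the individual order polytopes $O(P,<_i^F)$ directly.
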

\begin{proof}
We introduce another pair of gradings on $R$ and $S^F$. Let the $\bZ$-grading $\deg$ on $R$ be the total degree in all $X_a$. The ideals $I^F$ and $\widetilde I^F$ are $\deg$-homogeneous. Let the $\bZ$-grading $\deg_S$ on $S^F$ be the total degree in all $t_i$. For a $\deg$-homogeneous polynomial $r\in R$ we have $\deg_S\varphi^h(r)=\deg_S\varphi^F(r)=\deg r$. For a $\deg$-homogeneous subspace $U\subset R$ and $l\in\bZ$ we denote $U_l$ the $\deg$-homogeneous component of $U$ of grading $l$. Similarly, for a $\deg_S$-homogeneous subspace $T\subset S^F$ and $l\in\bZ$ we denote $T_l$ the $\deg_S$-homogeneous component of $T$ of grading $l$.

In view of Proposition~\ref{contains}, to show that $I^F=\widetilde I^F$ it suffices to show that $\dim I^F_l=\dim\widetilde I^F_l$ for all $l$. Since $I^F$ is an initial ideal of $I^h$ we have $\dim I^F_l=\dim I^h_l$ for all $l$. Therefore, in view of $\ker \varphi^h=I^h$ and $\ker\varphi^F=\widetilde I^F$, it suffices to show that $\dim\varphi^F(R)_l=\dim\varphi^h(R)_l$ for all $l$.

Choose $l\in\bZ_{\ge 0}$. Consider the monomial $M=X_{a_1}\dots X_{a_l}$ and the point $x=v_{a_1}+\dots+v_{a_l}$. Then $\varphi^h(M)$ is determined by the point $x$: every $\varphi_i^h(M)$ is equal to $t_i^l\prod_{p\in P} z_{p,i}^{x_p}$. Therefore, $\dim\varphi^h(R)_l$ is equal to the cardinality of the $l$-fold Minkowski sum $V_l=V+\dots+V$.

We are to show that $\dim\varphi^F(R)_l$ is also equal to the cardinality of $V_l$. First, let us show that for every $x\in V_l$ we may choose a monomial a $M(x)=X_{a_1(x)}\dots X_{a_l(x)}$ such that $x=v_{a_1(x)}+\dots+v_{a_l(x)}$ and $\varphi^F(M(x))\neq 0$. Note that $\varphi^F_i(M(x))\neq 0$ if $O(P,<_i)$ contains all $v_{a_j(x)}$. Suppose that there exist $a_1\le\dots\le a_l$ such that $x=v_{a_1}+\dots+v_{a_l}$. Then there exists a maximal chain $C$ in $\mL$ containing all the $a_j$. However, by part (b) of Proposition~\ref{maxchains}, $C=C_\prec$ for some linearization $\prec$ of $<$ and all the $v_{a_j}$ are vertices of $O(P,\prec)$. The simplex $O(P,\prec)$ is contained in some $O(P,<_i)$ and $\varphi^h_i(M_x)\neq 0$ so we may set $a_j(x)=a_j$.

Now we show that for every $x\in V_l$ there exist elements $a_1\le\dots\le a_l$ with $x=v_{a_1}+\dots+v_{a_l}$. Choose any $a^0_1,\dots,a^0_l$ with $x=v_{a^0_1}+\dots+v_{a^0_l}$. If some $a^0_j$ and $a^0_k$ are incomparable, then let the tuple $(a^1_1,\dots,a^1_l)$ be obtained from $(a^0_1,\dots,a^0_l)$ by replacing $a^0_j$ and $a^0_k$ with $a^0_j\land a^0_k$ and $a^0_j\lor a^0_k$. Note that $v_{a^1_1}+\dots+v_{a^1_l}=x$ and \[|a^0_i|^2+|a^0_j|^2<|a^0_j\land a^0_k|^2+|a^0_j\lor a^0_k|^2.\] Consequently, after a finite number of such replacements we will obtain a tuple $(a^N_1,\dots,a^N_l)$ with $v_{a^N_1}+\dots+v_{a^N_l}=x$ in which every two elements are comparable. Such a tuple may be reordered to obtain a weakly increasing tuple $a_1\le\dots\le a_l$.

For the chosen monomials $M(x)$ the images $\varphi^F(M(x))$ are seen to be linearly independent. We are left to prove that they span $\varphi^F(R)_l$. To do so we consider a monomial $M=X_{a_1}\dots X_{a_l}$ with $x=v_{a_1}+\dots+v_{a_l}$ and show that either $\varphi^F(M)=0$ or $\varphi^F(M)=\varphi^F(M(x))$. Indeed, suppose that $\varphi^F(M)\neq0$. Then we claim that $\varphi_i^F(M)=\varphi_i^F(M(x))$ for all $i$, i.e.\ the set of $O(P,<_i)$ which contain all $v_{a_j}$ coincides with the set of $O(P,<_i)$ which contain all $v_{a_j(x)}$. This, however, follows directly from Lemma~\ref{samesum}.
\end{proof}

\begin{remark}\label{nonregular}
Consider orders $<'_1,\dots,<'_{m'}$ such that the $O(P,<'_i)$ form a polyhedral but \emph{non-regular} subdivision of $O(P,<)$. Let $I'_i$ be the vanishing ideal of $H(\mM_{<'_i})\subset\bP$ and let $\widetilde I'$ be the intersection of these vanishing ideals. One can use the above argument to show that $I^m$ is still an initial ideal $\widetilde I'$ in this generality, i.e.\ the union of the $H(\mM_{<'_i})$ still degenerates into $Y^K$ (however, this union is not a degeneration of $H$). Such polyhedral but non-regular subdivisions can be shown to exist.
\end{remark}

Theorem~\ref{main} is now easily deduced.
\begin{proof}[Proof of Theorem~\ref{main}]
In the above setting we have seen that $Y^F$ is the union of the toric varieties $H(\mM_{<_i})$. All of these varieties have dimension $|P|=\dim Y^F$, hence they are the irreducible components of $Y^F$. This shows that the subdivision $\Theta_{O(P,<)}(\psi(w))$ is independent of $w\in C(I^h,I^F)$. We set $m(F)=m$ and $<_i^F=<_i$ and obtain parts (a) and (b) of the Theorem.

We are left to show that every regular subdivision of $O(P,<)$ that consists of polytopes of the form $O(P,<')$ has the form $\Theta_{O(P,<)}(u)$ for some $u\in\psi(K)$, this will prove part (c). In view of part (a) of the theorem, the subdivision $\Theta_{O(P,<)}(u)$ is the same for all points $u$ in the interior of $\psi(K)$. Moreover, if $u$ lies on the boundary of $\psi(K)$, then the subdivision $\Theta_{O(P,<)}(u)$ is different then that obtained for an interior point, since $\psi^{-1}(u)$ is contained in a proper face of $K$. Together with the fact that $\dim\psi(K)=\dim K=|V|$ this shows that $\psi(K)$ is a maximal cone in the secondary fan of $O(P,<)$, in view of part (a) of Theorem~\ref{secondaryfan}.

By part (c) of Theorem~\ref{secondaryfan}, the subdivision $\Theta_{O(P,<)}(u)$ obtained for any $u$ in the interior of $\psi(K)$ is a triangulation. However, this triangulation consists of the polytopes $O(P,<_i^K)$. Since they must be simplices, the orders $<_1^K,\dots,<_{m(K)}^K$ are precisely the linearizations of $<$. By part (b) of Theorem~\ref{secondaryfan}, every regular subdivision that coarsens this triangulation is obtained as $\Theta_{O(P,<)}(u)$ for some $u$ on the boundary of $\psi(K)$, in other words, it equals $\{O(P,<_1^F),\dots,O(P,<_{m(F)}^F)\}$ for some face $F$ of $K$. However, we have seen that a subpolytope of $O(P,<)$ has the form form $O(P,<')$ if and only if it is a union of the simplices $O(P,\prec)$. Therefore, a regular subdivision of $O(P,<)$ coarsens the triangulation formed by the simplices $O(P,\prec)$ if and only if it consists of polytopes of the form $O(P,<')$. 
\end{proof}

\begin{cor}\label{maxcone}
The cone $\psi(C(I^h,I^m))$ is a maximal cone in the secondary fan of $O(P,<)$.
\end{cor}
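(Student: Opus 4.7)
The plan is to extract this corollary directly from the proof of Theorem~\ref{main}(c), where most of the work has already been done. Since $\psi$ is a linear isomorphism from $\bR^\mL$ onto $\bR^V$ and the maximal cone $C(I^h,I^m)$ is open in $\bR^\mL$, its image $\psi(C(I^h,I^m))$ is open in $\bR^V$, so $\dim \psi(K)=|V|$.

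First I would identify the relevant regular subdivision. By Example~\ref{extremal} and Theorem~\ref{main}(a), every $w\in C(I^h,I^m)$ induces the same regular subdivision $\Theta$ of $O(P,<)$, namely the triangulation into the simplices $O(P,\prec)$ as $\prec$ ranges over all linearizations of $<$. This immediately gives the inclusion $\psi(C(I^h,I^m))\subset C(\Theta)$ in the secondary fan.

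Next I would argue equality. Pick any $w\in K\setminus C(I^h,I^m)$; then $w$ lies in the relative interior of a proper face $F\subsetneq K$, so by Theorem~\ref{main}(a) the subdivision $\Theta_{O(P,<)}(\psi(w))$ is $\{O(P,<_1^F),\dots,O(P,<_{m(F)}^F)\}$, which is a proper coarsening of $\Theta$ (it contains a part of dimension $|P|$ that is a union of several simplices $O(P,\prec)$). In particular this subdivision is not $\Theta$, so $\psi(w)\notin C(\Theta)$. Combined with the opposite inclusion from the previous paragraph and the fact that $\psi(K)$ is closed, we conclude $\psi(C(I^h,I^m))=C(\Theta)$ and $\psi(K)=\overline{C(\Theta)}$.

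Finally, by part (c) of Theorem~\ref{secondaryfan}, $C(\Theta)$ is a maximal cone in the secondary fan because $\Theta$ is a triangulation of $O(P,<)$. (Alternatively, one can note that $\psi(C(I^h,I^m))$ is full-dimensional in $\bR^V$, so the relatively open cone of the secondary fan containing it must also be full-dimensional, hence maximal by part (a) of Theorem~\ref{secondaryfan}.) There is essentially no obstacle here; the corollary is a bookkeeping consequence of statements established in the proof of Theorem~\ref{main}(c), and the only care needed is to match the definitions of the Gr\"obner fan cone $C(I^h,I^m)$ with the secondary fan cone $C(\Theta)$ via the isomorphism $\psi$.
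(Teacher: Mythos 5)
Your proof is correct and follows essentially the same route as the paper, which derives this corollary directly inside the proof of Theorem~\ref{main}(c): constancy of the subdivision on the interior of $\psi(K)$ via part (a), a different subdivision on proper faces, and the dimension count $\dim\psi(K)=|V|$ combined with Theorem~\ref{secondaryfan}. The only point glossed over (in both your write-up and the paper's) is that excluding points of $C(\Theta)$ lying outside $\psi(K)$ requires a one-line convexity argument, since $C(\Theta)$ is a convex cone meeting the interior of $\psi(K)$ but not its boundary.
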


It is known that the secondary fan of a polytope is refined by the Gr\"obner fan of the corresponding toric ideal, see~\cite[Proposition 8.15]{S}. The above corollary shows that they, in fact, share at least one maximal cone when the polytope in question is an order polytope.

\begin{remark}\label{permutahedra}
Let $\mL$ be the lattice of all subsets in $\{1,\dots,n\}$ with intersection as meet, union as join and inclusion as the order relation. Functions $u:\mL\to\bR$ satisfying \[u(a)+u(b)\ge u(a\cap b)+u(a\cup b)\] are known as \emph{submodular functions}. Evidently, the set of submodular functions can be viewed as polyhedral cone in $\bR^\mL$ (known as the~\emph{submodular cone}) which coincides with $-K$. Submodular functions are in correspondence (\cite[Proposition 12]{MPSSW}) with an important family of polytopes known as \emph{generalized permutahedra} (\cite{P}). In fact, this correspondence is easily expressed in terms of the above construction. For a submodular function $u$ suppose that $-u=w\in K$. As above, let $f$ be the function on $O(P,<)$ (the unit cube) with all $f(v_a)=w_a$ and linear on every $O(P,<_i^F)$. Then the restriction of $f$ to $O(P,<_i^F)$ has the form $(\alpha_i,x)+w_{\varnothing}$. The convex hull of the points $-\alpha_i$ is the generalized permutahedron corresponding to $u$. Furthermore, using the notions of \emph{extended submodular functions} and \emph{extended generalized permutahedra} (see~\cite[Subsection 12.4]{AA}) one could generalize this observation to an arbitrary lattice $\mL$. It would be interesting to explore the relationship between the algebraic and geometric objects studied in this paper and the corresponding generalized permutahedra and associated combinatorial objects.
\end{remark}

\section{Semitoric subdivisions from weight polytopes}\label{weightpolytopes}

In this section we will explain an alternative method of describing the variety $Y^F$ and the corresponding subdivision of the order polytope. We will start with a few simple Gr\"obner theoretic observations that apply equally well to arbitrary ideals in polynomial rings.  

As already mentioned, every point $w\in\bR^\mL$ defines a grading $\grad_w$ on $R$ by $\grad_w(X_a)=w_a$. 
\begin{proposition}
For an ideal $I\subset R$ the set $U(I)\subset\bR^\mL$ consisting of $w\in\bR^\mL$ for which $I$ is $\grad_w$-homogeneous is a linear subspace.
\end{proposition}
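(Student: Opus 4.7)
The plan is to verify the two closure conditions for a linear subspace: closure under scalar multiplication and closure under addition. Containing $0$ is immediate since under the zero grading every polynomial is homogeneous, so every ideal is $\grad_0$-homogeneous.

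For scalar multiplication, observe that for any $c\in\bR$ the grading $\grad_{cw}$ is just $c\cdot\grad_w$, so the decomposition of any polynomial into $\grad_{cw}$-homogeneous components coincides with its decomposition into $\grad_w$-homogeneous components (with rescaled degrees). Hence if $I$ contains all $\grad_w$-homogeneous components of each of its elements, it also contains all $\grad_{cw}$-homogeneous components.

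The main content is closure under addition. Given $w,w'\in U(I)$ and $f\in I$, I would use a two-step refinement: first decompose $f=\sum_i f_i$ into $\grad_w$-homogeneous components (each $f_i\in I$ since $w\in U(I)$), then decompose each $f_i=\sum_j f_{i,j}$ into $\grad_{w'}$-homogeneous components (each $f_{i,j}\in I$ since $w'\in U(I)$). The key observation is that every monomial appearing in $f_{i,j}$ has $\grad_w$-degree $i$ and $\grad_{w'}$-degree $j$, hence $\grad_{w+w'}$-degree $i+j$. Therefore each $\grad_{w+w'}$-homogeneous component of $f$ is a sum of those $f_{i,j}$ with $i+j$ fixed, and thus lies in $I$. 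This proves $w+w'\in U(I)$.

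No step here is a real obstacle; the only subtlety is phrasing the bi-homogeneous refinement cleanly so that the passage from $\grad_w$- and $\grad_{w'}$-homogeneity to $\grad_{w+w'}$-homogeneity of each piece $f_{i,j}$ is transparent at the level of individual monomials.
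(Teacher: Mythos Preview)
Your proof is correct and follows essentially the same approach as the paper: both arguments perform the two-step refinement of an element of $I$ into pieces that are simultaneously $\grad_{w_1}$- and $\grad_{w_2}$-homogeneous, and then observe that such bi-homogeneous pieces are automatically $\grad_{\alpha_1 w_1+\alpha_2 w_2}$-homogeneous. The only cosmetic difference is that the paper handles arbitrary linear combinations $\alpha_1 w_1+\alpha_2 w_2$ in one stroke, whereas you treat scalar multiplication and addition separately.
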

\begin{proof}
Suppose that $I$ is homogeneous with respect to $\grad_{w_1}$ and $\grad_{w_2}$. Then any $p\in I$ decomposes as $p=p_1+\dots+p_k$ where the $p_i$ are $\grad_{w_1}$-homogeneous and lie in $I$. Every $p_i$ further decomposes into a sum of $\grad_{w_2}$-homogeneous polynomials that lie in $I$. This expresses $p$ as sum of polynomials that are homogeneous with respect to both $\grad_{w_1}$ and $\grad_{w_2}$ and, therefore, any $\grad_{\alpha_1w_1+\alpha_2w_2}$.
\end{proof}

\begin{proposition}
Let ideals $I,I'\subset R$ be such that $C(I,I')$ is nonempty. Then $U(I')$ is the linear span of $C(I,I')$.
\end{proposition}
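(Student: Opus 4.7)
I would prove the two inclusions separately. For the easier direction $\spann C(I,I')\subset U(I')$, I would argue that for any $w\in C(I,I')$ the ideal $I'=\initial_w I$ is automatically $\grad_w$-homogeneous: it is spanned by elements of the form $\initial_w p$ with $p\in I$, each of which is $\grad_w$-homogeneous by definition of the initial part. Hence $C(I,I')\subset U(I')$, and since $U(I')$ is a linear subspace (by the preceding proposition), its linear span is contained there too.

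For the reverse inclusion, fix any $w_0\in C(I,I')$ (which exists by hypothesis) and pick an arbitrary $w\in U(I')$. The plan is to show that $w_0+\varepsilon w\in C(I,I')$ for all sufficiently small $\varepsilon>0$, from which $w=\varepsilon^{-1}\bigl((w_0+\varepsilon w)-w_0\bigr)$ lies in $\spann C(I,I')$. The key step is the identity
\[
\initial_{w_0+\varepsilon w} I \;=\; \initial_w(\initial_{w_0} I)\qquad\text{for all sufficiently small }\varepsilon>0.
\]
This is a standard fact in the theory of Gr\"obner fans (see~\cite[Proposition~1.13]{S}). Granting it, the right-hand side equals $\initial_w I'$, and since $I'$ is $\grad_w$-homogeneous by assumption, every $p\in I'$ decomposes as a sum of $\grad_w$-homogeneous elements of $I'$, giving $\initial_w I'=I'$. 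Hence $\initial_{w_0+\varepsilon w} I=I'$, i.e.\ $w_0+\varepsilon w\in C(I,I')$, as desired.

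The main obstacle is justifying the displayed identity at the level of ideals, not just single polynomials. The polynomial-level statement is an elementary observation: for a fixed $p=\sum c_j\mathbf X^{d^j}$, the set of exponents minimizing $(w_0+\varepsilon w,d^j)$ for small $\varepsilon>0$ is precisely the set minimizing $(w,d^j)$ among those minimizing $(w_0,d^j)$, because the finitely many non-minimizers of $(w_0,\cdot)$ are separated from the minimizers by a positive gap that dominates the $\varepsilon$-perturbation. The promotion to ideals requires a uniform choice of $\varepsilon$ that works for all of $I$; this is done by passing to a finite (universal or reduced) Gr\"obner basis of $I$ with respect to a term order refining $w_0$, so that only finitely many polynomials need to be controlled simultaneously. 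Since this argument is entirely standard, I would simply cite~\cite{S} rather than reproduce it.
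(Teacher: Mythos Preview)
Your proof is correct and follows essentially the same approach as the paper: both directions are argued identically, and the reverse inclusion is obtained via the identity $\initial_{w_0+\varepsilon w}I=\initial_w(\initial_{w_0}I)$ for small $\varepsilon>0$, cited from~\cite[Proposition~1.13]{S}. The only cosmetic difference is that the paper phrases the second inclusion as a proof by contradiction whereas you argue it directly.
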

\begin{proof}
Every $w\in C(I,I')$ satisfies $\initial_w I=I'$ which implies that $I'$ is $\grad_w$-homogeneous. Therefore, the linear span of $C(I,I')$ is contained in $U(I')$. Now suppose that $w'\in U(I')$ does not lie in the linear span of $C(I,I')$ and consider $w\in C(I,I')$. Note that $\initial_{w'} I'=I'$. By \cite[Proposition 1.13]{S} we have \[\initial_{w'}(\initial_w I)=\initial_{w+\varepsilon w'}I\] for sufficiently small $\varepsilon>0$. However, the left-hand side above is equal to $I'$ which means that $w+\varepsilon w'\in C(I,I')$, this contradicts our choice of $w'$ and $w$. 
\end{proof}

We now define the key new object considered in this section. Every $a\in\mL$ defines a functional $\la_a$ on $\bR^\mL$ by $\la_a(w)=w_a$. For a face $F$ of $K$ let $\la_a^F\in U(I^F)^*$ be the restriction of $\la^a$ to $U(I^F)$ (which is the linear hull of $F$). We define the \emph{weight polytope} $\Pi^F\subset U(I^F)^*$ as the convex hull of the set $\{\la_a^F,a\in\mL\}$.

The terminology is due to the fact that the points $\la_a^F$ are, in fact, the weights of a certain torus representation. The torus $T=(\bC^*)^\mL$ acts on $R$ with a torus point $(t_a,a\in\mL)$ mapping each $X_a$ to $t_aX_a$. The lattice $\Hom(\bC^*,T)$ is naturally isomorphic to the lattice $\bZ^\mL\subset\bR^\mL$. The saturated sublattice $U(I^F)\cap\bZ^\mL$ corresponds to a subtorus $T^F\subset T$ (which is also seen to be the identity component of the stabilizer of $I^F$). The dual lattice $\Hom(T^F,\bC^*)$, the elements of which are variously known as \emph{weights}, \emph{coweights} and \emph{characters}, is naturally embedded into $U(I^F)^*$. Thus we have distinguished a lattice of integer points within the space $U(I^F)^*$. Note that the functionals $\la_a^F$ take integer values on the sublattice $U(I^F)\cap\bZ^\mL$ which lets us view them as weights and makes $\Pi^F$ a lattice polytope.

Recall the grading $\deg$ on $R$ defined in the previous section, in fact, $\deg=\grad_{(1,\dots,1)}$. Since the ideal $I^F$ is $\deg$-homogeneous, $\deg$ induces a grading on the quotient $R/I^F$ which we also denote $\deg$. Since elements of $T^F$ preserve $I^F$, we obtain a $T^F$-action on the variety $Y^F$ and on its homogeneous coordinate ring $R/I^F$. Moreover, we obtain a $T^F$ action on every $\deg$-homogeneous component $(R/I^F)_l$ of $R/I^F$. If we now view $(R/I^F)_1$ as a $T^F$-representation, it decomposes into the direct sum of $|\mL|$ one-dimensional representations: the lines spanned by the images of the $X_a$. Each of these one-dimensional representations corresponds to an element of $\Hom(T^F,\bC^*)$. In fact, by tracing the definitions one sees that the representation given by $X_a$ corresponds to $\la_a^F\in\Hom(T^F,\bC^*)$. This allows us to say that the $\la_a^F$ are the weights of the representation $(R/I^F)_1$ and form its \emph{weight diagram} the convex hull of which is the weight polytope $\Pi^F$.

We establish a few basic properties of the polytopes $\Pi_F$. First of all, note that $I^K=I^m$ and $U(I^K)=\bR^\mL$. Consider the basis in $U(I^K)^*$ dual to the standard basis in $\bR^\mL$, the elements of this basis are the weights $\la_a$. Hence the polytope $\Pi_K$ is the $(|\mL|-1)$-dimensional simplex with these basis vectors as vertices. The polytope obtained in the other extremal case can be identified with $O(P,<)$. Let $F_0$ denote the apex of $K$, then $I^{F_0}=I^h$.

\begin{proposition}\label{hibigradings}
There exists a nondegenerate affine map $\zeta$ from $\bR^P$ to $U(I^h)^*$ taking $v_a$ to $\la_a^{F_0}$. In particular, $\zeta(O(P,<))=\Pi^{F_0}$.   
\end{proposition}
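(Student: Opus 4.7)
The plan is to construct $\zeta$ by first identifying $U(I^h)$ explicitly as the image of an injective linear map from the space of affine functions on $\bR^P$, and then transposing. Since $I^h$ is generated by the binomials $d(a,b) = X_aX_b - X_{a\lor b}X_{a\land b}$, it is $\grad_w$-homogeneous iff $w_a + w_b = w_{a\lor b} + w_{a\land b}$ for every incomparable pair $\{a,b\}$ in $\mL$; these equations (together with the trivially satisfied ones for comparable pairs) cut out $U(I^h) \subset \bR^\mL$. The crucial observation is the identity $v_a + v_b = v_{a\lor b} + v_{a\land b}$ in $\bR^P$, which follows from $\iota$ being a lattice isomorphism $\mL \to \mathcal J(P,<)$ intertwining $\land,\lor$ with $\cap,\cup$.

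I would then define $\Psi\colon \bR^P \oplus \bR \to U(I^h)$ by $\Psi(u, c)_a := (u, v_a) + c$; the identity above ensures that the image lies in $U(I^h)$. The main step is to show that $\Psi$ is an isomorphism. For surjectivity, given $w \in U(I^h)$ I would set $c := w_{a_0}$ (with $a_0 \in \mL$ the minimum element) and recursively define $u_p := w_p - c - \sum_{q \in P,\, q < p} u_q$ in order of increasing $|p|$, then verify $\Psi(u, c) = w$ by induction on $|a|$. For $a$ join-irreducible this is immediate from the definition of $u_a$. For $a$ that is neither minimal nor join-irreducible I would pick a decomposition $a = b \lor b'$ with $b, b' \ne a$ (which necessarily has $|b|, |b'|, |b \land b'| < |a|$), use the relation $w_a = w_b + w_{b'} - w_{b \land b'}$ from $U(I^h)$, and apply the inductive hypothesis to rewrite the right-hand side as $c + \sum_{q \in \iota(b)} u_q + \sum_{q \in \iota(b')} u_q - \sum_{q \in \iota(b \land b')} u_q$, which collapses via $\iota(a) = \iota(b) \cup \iota(b')$ and $\iota(b \land b') = \iota(b) \cap \iota(b')$ to $c + \sum_{q \in \iota(a)} u_q$. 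Injectivity of $\Psi$ is immediate from this inversion. I expect this inductive bookkeeping to be the main, though ultimately mechanical, obstacle.

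Finally, transposing $\Psi$ produces $\zeta$: for $x \in \bR^P$ and $w \in U(I^h)$ with $(u, c) := \Psi^{-1}(w)$, set $\zeta(x)(w) := (u, x) + c$. Being linear in $w$ and affine in $x$, this defines an affine map $\zeta\colon \bR^P \to U(I^h)^*$, and $\zeta(v_a)(w) = \Psi(u, c)_a = w_a = \la_a^{F_0}(w)$ for every $a$, giving $\zeta(v_a) = \la_a^{F_0}$. Nondegeneracy of $\zeta$ amounts to injectivity of its linear part, equivalently to surjectivity of the projection $U(I^h) \to \bR^P$, $w \mapsto u$, which holds because $\Psi$ is onto. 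Taking convex hulls of the vertices then yields $\zeta(O(P,<)) = \Pi^{F_0}$.
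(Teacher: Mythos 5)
Your construction of the map $\Psi$ is exactly the map $\eta$ used in the paper, and the final transposition step is the same; the genuine difference lies in how surjectivity of $\Psi$ onto $U(I^h)$ is established. The paper inverts nothing: it only checks that the image of $\eta$ lies in $U(I^h)$ and then concludes by a dimension count, $\dim U(I^h)=\dim F_0=|P|+1$, obtained from Corollary~\ref{maxcone} together with part (d) of Theorem~\ref{secondaryfan} (the minimal cone of the secondary fan is a linear space of dimension $\dim O(P,<)+1$). Your recursive inversion --- setting $c=w_{a_0}$, solving for $u_p$ along the join-irreducibles, and propagating to all of $\mL$ by induction on height via $w_a=w_b+w_{b'}-w_{b\land b'}$ for a decomposition $a=b\lor b'$ into incomparable proper parts --- is more elementary and self-contained: it does not lean on the secondary-fan results of Section~\ref{semitoric}, and it exhibits $\Psi^{-1}$ explicitly. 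The price is the bookkeeping you anticipated, plus the need for the harder direction of your description of $U(I^h)$.

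On that last point there is a small gap worth closing. The implication you actually use in the induction step is that \emph{every} $w\in U(I^h)$ satisfies $w_a+w_b=w_{a\land b}+w_{a\lor b}$ for incomparable $a,b$, and this does not follow merely from $I^h$ being generated by the $d(a,b)$: a $\grad_w$-homogeneous ideal can have inhomogeneous generators. What saves you is that if some $d(a,b)$ were $\grad_w$-inhomogeneous for $w\in U(I^h)$, its homogeneous components $X_aX_b$ and $X_{a\land b}X_{a\lor b}$ would separately lie in $I^h$; but $I^h=\ker\varphi$ (Proposition~\ref{kernel}) contains no monomials, since $\varphi$ sends every monomial to a nonzero monomial. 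One sentence to this effect completes the argument. The remaining steps (injectivity from the inversion, nondegeneracy of $\zeta$ from surjectivity of $w\mapsto u$, and $\zeta(O(P,<))=\Pi^{F_0}$ by taking convex hulls of the vertex sets) are all correct.
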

\begin{proof}
First, for a functional $f\in(\bR^P)^*$ and $c\in\bR$ consider the point $w\in\bR^\mL$ with $w_a=f(v_a)+c$ for all $a\in\mL$. It is evident that $w\in U(I^h)$: for any $a,b\in\mL$ we have \[w_a+w_b=w_{a\land b}+w_{a\lor b}\] so that $d(a,b)$ is $\grad_w$-homogeneous. This provides a nondegenerate linear map \[\eta:(\bR^P)^*\oplus\bR\to U(I^h).\] Moreover, by Corollary~\ref{maxcone} and part (d) of Theorem~\ref{secondaryfan} the dimension of $F^0$ is equal to $|P|+1$. This means that $\eta$ identifies $(\bR^P)^*\oplus\bR$ with $U(I^h)$. The dual of the inverse map $(\eta^{-1})^*$ identifies $\bR^P\oplus\bR^*$ with $U(I^h)^*$. This allows us to set $\zeta(x)=(\eta^{-1})^*(x,1)$ for $x\in\bR^P$ where $1\in\bR^*$ is the functional with $1(c)=c$. For $w\in U(I^h)$ we then indeed have \[\zeta(v_a)(w)=(v_a,1)(\eta^{-1}(w))=w_a.\qedhere\] 
\end{proof}

For two faces with $F\subset G$ we have the inclusion $U(I^F)\subset U(I^G)$ and the projection $\pi^G_F$ from $U(I^G)^*$ to $U(I^F)^*$. For any $a$ we have $\pi^G_F(\la_a^G)=\la_a^F$ and, consequently, $\pi^G_F(\Pi^G)=\Pi^F$. 
\begin{proposition}\label{pitrivia}
For any face $F$ of $K$ the following hold.
\begin{enumerate}[label=(\alph*)]
\item Every $\la_a^F$ is a vertex of $\Pi^F$.
\item $\Pi^F$ contains no integer points other than the $\la_a^F$.
\item $\Pi^F$ is a codimension 1 polytope in $U(I^F)^*$, i.e.\ $\dim\Pi^F=\dim F-1$.
\end{enumerate}
\end{proposition}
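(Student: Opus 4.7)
For (c), since $I^F$ is $\deg$-homogeneous the vector $(1,\dots,1)\in\bR^\mL$ lies in $U(I^F)$ and every $\la_a^F$ evaluates to $1$ on it, so $\Pi^F$ lies in a single affine hyperplane of $U(I^F)^*$, giving $\dim\Pi^F\le\dim F-1$. For the matching lower bound I would compute the annihilator in $U(I^F)$ of the linear span of the differences $\{\la_a^F-\la_{a_0}^F\}_a$: an element $w$ of this annihilator satisfies $w_a=w_{a_0}$ for every $a\in\mL$, hence is a scalar multiple of $(1,\dots,1)$. The annihilator is therefore one-dimensional, so $\spann\{\la_a^F-\la_{a_0}^F\}$ has dimension $\dim F-1$ and the affine hull of $\{\la_a^F\}$ has dimension exactly $\dim F-1$.

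For (a), given $a\in\mL$ I look for $w\in U(I^F)$ with $w_a>w_b$ for all $b\ne a$, which exhibits $\la_a^F$ as uniquely maximized by $w$ on $\Pi^F$ and hence as a vertex. Since $F_0\subset F$ we have $U(I^{F_0})\subset U(I^F)$, so it suffices to find $w$ inside the smaller space $U(I^{F_0})$, which by the proof of Proposition~\ref{hibigradings} contains every vector of the form $w_b=f(v_b)+c$. Because $v_a$ is a vertex of the order polytope $O(P,<)$, a strictly separating linear functional $f$ on $\bR^P$ exists and produces the required $w$.

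For (b), my plan is to first settle the case $F=F_0$ and then bootstrap via the projection $\pi^F_{F_0}$. For $F_0$, I would upgrade the affine isomorphism $\zeta$ of Proposition~\ref{hibigradings} to a lattice isomorphism, i.e.\ verify that $U(I^{F_0})\cap\bZ^\mL=\eta((\bZ^P)^*\oplus\bZ)$; the nontrivial inclusion reduces to the observation that the $v_a$ generate $\bZ^P$ over $\bZ$, which one gets by first pinning down the additive constant via $v_{a_0}=0$ and then using the identity $v_{a_p}=e_p+\sum_{q<p}e_q$ to inductively express each standard basis vector $e_p$ as a $\bZ$-combination of the $v_a$. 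Combined with the fact that the integer points of $O(P,<)$ are exactly the $v_a$, this identifies the integer points of $\Pi^{F_0}$ with $\{\la_a^{F_0}\}$. For general $F$ I take an integer $\mu\in\Pi^F$, write it convexly as $\mu=\sum c_b\la_b^F$, and apply the integral map $\pi^F_{F_0}$ to obtain $\sum c_b\la_b^{F_0}=\la_a^{F_0}$ for some $a$; since $\la_a^{F_0}$ is a vertex of $\Pi^{F_0}$ by (a) applied to $F_0$, and the $\la_b^{F_0}$ are pairwise distinct (any equality would force $v_a=v_b$, hence $a=b$ by part (e) of Proposition~\ref{basic}), extremality forces $c_a=1$ and all other $c_b=0$, giving $\mu=\la_a^F$.

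The main obstacle is the lattice upgrade in (b): promoting $\zeta$ from an affine isomorphism to a genuine isomorphism of integer lattices between $\bZ^P$ and the integer points of the ambient hyperplane of $\Pi^{F_0}$. Every other step is either a short linear-algebra argument built around the one-dimensional space of ``constant'' gradings in $U(I^F)$, or a direct separation argument using the already-known extremality of the $v_a$ in $O(P,<)$.
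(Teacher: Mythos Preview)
Your proof is correct and your argument for (b) follows the paper almost verbatim: reduce to $F_0$ via the lattice isomorphism underlying $\zeta$, then pull an arbitrary integer point down through $\pi^F_{F_0}$ and use extremality of $\la_a^{F_0}$ (equivalently, of $v_a$) to force the convex combination to be trivial. Your extra care about the $\la_b^{F_0}$ being pairwise distinct and about why the $v_a$ generate $\bZ^P$ simply makes explicit what the paper leaves implicit.

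For (a) and (c), however, you take a genuinely different (and somewhat more hands-on) route than the paper. The paper exploits the projection maps $\pi^G_F$ throughout: for (a) it observes that $\pi^F_{F_0}$ surjects $\Pi^F$ onto $\Pi^{F_0}$, which already has $|\mL|$ distinct vertices by Proposition~\ref{hibigradings}, so a counting argument forces every $\la_a^F$ to be a vertex; for (c) it sandwiches the codimension of $\Pi^F$ between that of the simplex $\Pi^K$ and that of $\Pi^{F_0}$, both of which equal~$1$. Your approach instead constructs witnesses directly inside $U(I^F)$: a separating functional of the form $w_b=f(v_b)$ for (a), and the annihilator computation showing that only constant $w$ kill all the differences $\la_a^F-\la_{a_0}^F$ for (c). Your arguments are slightly more elementary and self-contained (they do not invoke the secondary fan to know $\dim F_0=|P|+1$, for instance), while the paper's projection arguments are shorter and highlight the uniform role of the maps $\pi^G_F$ that reappear in Theorem~\ref{main2}.
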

\begin{proof}
Proposition~\ref{hibigradings} implies that $\Pi^{F_0}$ has precisely $|\mL|$ vertices. Since $\pi^F_{F_0}(\Pi^F)=\Pi^{F_0}$, the polytope $\Pi^F$ has at least as many vertices as $\Pi^{F_0}$. Since $\Pi^F$ is the convex hull of the $|\mL|$ points $\la_a^F$, all of these points must be its vertices, part (a) follows.

First we prove part (b) in the case $F=F_0$. Since $O(P,<)$ contains no integer points other than its vertices, it suffices to show that $\zeta$ identifies $\bZ^P$ with the set of integer point in the affine span of $\Pi^{F_0}$. This, however, follows immediately from the fact that the map $\eta$ considered in the proof of Proposition~\ref{hibigradings} identifies the sets of integer points in $(\bR^P)^*\oplus\bR$ and $U(I^h)$.

Now let $F$ be arbitrary and suppose that an integer point $\mu\in\Pi^F$ is not a vertex. Then we have $\mu=\sum c_a\la_a^F$ where $\sum c_a=1$, all $c_a\ge 0$ and $c_a>0$ for at least two different $a$. On one hand, since $\pi_{F_0}^F(\mu)$ is an integer point it must be equal to some $\la_b^{F_0}$. On the other, $\pi_{F_0}^F(\mu)=\sum c_a\la_a^{F_0}$ and, consequently, $v_b=\sum c_a v_a$. The latter equality, however, implies that $c_b=1$ while all other $c_a=0$ and we reach a contradiction.

To prove part (c) note that, since $\pi^K_F(\Pi^K)=\Pi^F$, the codimension of $\Pi^F$ is no greater than that of the simplex $\Pi^K$ which is 1. Similarly, the codimension of $\Pi^F$ is no less than that of $\Pi^{F_0}$ which is also 1.
\end{proof}

The next theorem will consider the toric variety associated with the polytope $\Pi^F$. Since $\Pi^F$ is not a full-dimensional polytope, this notion needs a brief clarification. Consider an arbitrary lattice polytope $Q\subset\bR^N$. Choose an integer point $x$ in the affine span of $Q$. Then $Q-x$ is a full-dimensional polytope in the real space $\spann(Q-x)$ with lattice of integer points $\spann(Q-x)\cap\bZ^N$. This allows us to define the associated toric variety $Z_Q$ as the toric variety of the normal fan of $Q-x$ in $\spann(Q-x)$. This definition is evidently independent of the choice of $x$. For a discussion of this approach see, for instance,~\cite[Proposition 3.2.9]{CLS} and the preceding paragraph. In particular, the proposition asserts that the torus orbit closure in $Z_Q$ corresponding to a face $Q'$ is isomorphic to $Z_{Q'}$.

We will also be considering a different kind of toric variety corresponding to the lattice polytope $Q$. This is the not necessarily normal projective toric variety $\widehat Z_Q$ defined by the semigroup of integer points in the cone over $Q$. For every integer point $y\in Q\cap\bZ^N$ consider the variable $X'_y$ and let $R'=\bC[\{X'_y,y\in Q\cap\bZ^N\}]$. We also consider the polynomial ring $S=\bC[z_1,\dots,z_N,t]$ and let $I'$ be the kernel of the homomorphism from $R'$ to $S$ mapping $X'_y$ to $t\prod z_i^{y_i}$ where $(y_1,\dots,y_N)$ are the coordinates of $y$. Then $\widehat Z_Q$ is the subvariety in $\bP(\bC^{Q\cap\bZ^N})$ cut out by the ideal $I'$ where we view the $X'_y$ as homogeneous coordinates. If the polytope $Q$ is normal, then $\widehat Z_Q$ is isomorphic to $Z_Q$, more generally, $Z_Q$ is the normalization of $\widehat Z_Q$. For example, when $Q=O(P,<)$, the ring $R'$ can be identified with $R$ and the homomorphism considered above is the map $\varphi$ considered in Proposition~\ref{kernel}. Under this identification $I'$ becomes $I^h$ and $H\simeq\widehat Z_Q$ while Proposition~\ref{toricvariety} holds because the order polytope is normal.

The torus $(\bC^*)^N$ acts on $\widehat Z^Q$ with an open orbit. Moreover, the torus orbits in $\widehat Z^Q$ also correspond to the faces of $Q$. The closure of the orbit corresponding to a face $Q'$ is cut out in $\widehat Z^Q$ by the equations $X'_y=0$ for all $y\notin Q'$. This orbit closure is isomorphic to $\widehat Z_{Q'}$. See~\cite[Appendix 3.A]{CLS} for a treatment of non-normal toric varieties.

Now consider a linearization $\prec$ of $<$ and let the maximal chain $C_\prec\subset\mL$ consist of $a_0,\dots,a_{|P|}$. Then we let $\Delta_\prec$ denote the $|P|$-dimensional face of the simplex $\Pi^K$ with vertices $\la_{a_0}^K,\dots,\la_{a_{|P|}}^K$. Recall that for every face $F$ of $K$ Theorem~\ref{main} provides orders $<_1^F,\dots,<_{m(F)}^F$ on $P$ which enumerate the parts $O(P,<_i^F)$ in the corresponding subdivision of $O(P,<)$ and the irreducible components $H(\mM_{<_i^F})$ of the degeneration $Y^F$. We are ready to prove the main result of this section. 
\begin{theorem}\label{main2}
For any face $F$ of $K$ the following hold.
\begin{enumerate}[label=(\alph*)]
\item The union of the images $\pi_F^K(\Delta_\prec)$ over all linearizations $\prec$ of $<$ is a union of $m(F)$ faces $\Phi_1^F,\dots,\Phi_{m(F)}^F$ of $\Pi^F$, each of these faces has dimension $|P|$. For every $i$ the map $\zeta^{-1}\pi^F_{F_0}$ restricts to a linear bijection from $\Phi_i^F$ to the polytope $O(P,<_i^F)$.
\item In the toric variety $Z_{\Pi^F}$ consider the subvariety $\widetilde Y^F$ that is the union of the torus orbit closures corresponding to the faces $\Phi_1^F,\dots,\Phi_{m(F)}^F$. Then $\widetilde Y^F$ is isomorphic to $Y^F$.
\end{enumerate}
\end{theorem}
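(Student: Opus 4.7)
The plan is to set $\Phi_i^F := \mathrm{conv}\{\la_a^F : a \in \mM_{<_i^F}\}$ and verify the claimed properties step by step. To show each $\Phi_i^F$ is a face of $\Pi^F$, I would recycle the convex piecewise linear function $f$ from the proof of Theorem~\ref{main}: fix $w \in C(I^h,I^F)$, write $f|_{O(P,<_i^F)}(x) = (\alpha_i,x) + \beta_i$, and set $w'_a = (\alpha_i, v_a) + \beta_i$. This $w'$ lies in $U(I^h) \subset U(I^F)$ by the same reasoning as in Proposition~\ref{hibigradings}, so $g_i := w - w'$ is an element of $U(I^F)$. By construction $g_i(\la_a^F) = w_a - (\alpha_i,v_a) - \beta_i$ vanishes exactly when $a \in \mM_{<_i^F}$ and is strictly positive otherwise (by convexity of $f$), exhibiting $\Phi_i^F$ as the face of $\Pi^F$ on which $g_i$ attains its minimum.

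The heart of the argument is showing that $\zeta^{-1}\pi^F_{F_0}$ restricts to a bijection $\Phi_i^F \to O(P,<_i^F)$. Surjectivity is immediate from the vertex correspondence $\la_a^F \mapsto v_a$. For injectivity I would establish the following real-coefficient claim: if $(c_a)_{a\in\mM_{<_i^F}}$ satisfies $\sum c_a = 0$ and $\sum c_a v_a = 0$, then $\sum c_a \la_a^F = 0$ in $U(I^F)^*$. Since $C(I^h,I^F)$ has linear span $U(I^F)$, it suffices to verify $\sum c_a w_a = 0$ for every $w \in C(I^h,I^F)$; but on $\mM_{<_i^F}$ one has $w_a = (\alpha_i,v_a) + \beta_i$, so
\[
\textstyle\sum c_a w_a = \bigl(\alpha_i,\;\textstyle\sum c_a v_a\bigr) + \beta_i\textstyle\sum c_a = 0.
\]
Applying this to the difference of two convex-combination representations of a common image point yields injectivity, forcing $\dim \Phi_i^F = |P|$. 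The inclusion $\pi_F^K(\Delta_\prec) \subseteq \Phi_i^F$ (for the unique $i$ with $O(P,\prec) \subseteq O(P,<_i^F)$) follows from $C_\prec \subseteq \mM_{<_i^F}$, and the reverse uses the bijection to express any $\mu \in \Phi_i^F$ via vertices of the unique $O(P,\prec)$ in the triangulation of $O(P,<_i^F)$ containing $\zeta^{-1}\pi^F_{F_0}(\mu)$.

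For part (b), each orbit closure in $Z_{\Pi^F}$ corresponding to $\Phi_i^F$ is the normal toric variety $Z_{\Phi_i^F}$, and I would show $Z_{\Phi_i^F} \simeq H(\mM_{<_i^F}) = Z_{O(P,<_i^F)}$ by checking that the affine bijection from part (a) is an isomorphism of lattice polytopes. It sends the affine lattice of $\Phi_i^F$ injectively into $\mathbb Z^P$; that the image is all of $\mathbb Z^P$ follows because it contains every $v_a$ with $a \in \mM_{<_i^F}$, and these affinely $\mathbb Z$-span $\mathbb Z^P$ via the identity $e_p = v_{J_p} - v_{J_p\setminus\{p\}}$ for $J_p$ a principal order ideal in $(P,<_i^F)$. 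Since $O(P,<_i^F)$ is normal so is $\Phi_i^F$, and the natural projective map $Z_{\Pi^F} \to \mathbb P(\mathbb C^\mL)$ induced by the vertices of $\Pi^F$ therefore restricts to an isomorphism on each $Z_{\Phi_i^F}$ onto $H(\mM_{<_i^F}) \subset \mathbb P$. The gluing of these component isomorphisms is consistent because they are all restrictions of the single global map $\zeta^{-1}\pi^F_{F_0}$, and combined with Theorem~\ref{main}(b) this yields $\widetilde Y^F \simeq Y^F$. The principal obstacle throughout will be the real-coefficient injectivity claim; the remainder is careful bookkeeping with the structures already established.
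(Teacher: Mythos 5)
Your part (a) is essentially the paper's argument: the face-cutting functional you build from $w$ minus the affine extension of $f|_{O(P,<_i^F)}$ is exactly the $w'$ used in the paper (up to a sign -- note that with the paper's conventions $h\ge f$, so your $g_i$ is nonpositive off $\mM_{<_i^F}$, not positive; harmless). Your ``real-coefficient claim'' is a nice self-contained substitute for the paper's terser route to injectivity (the paper deduces $\dim\Phi_i^F=|P|$ from the fact that a face of a polytope cannot be a finite union of lower-dimensional simplices, and then gets the bijection from equality of dimensions); your version is arguably cleaner and I see no problem with it.

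Part (b), however, has a genuine gap at the final step. You correctly establish that each $\Phi_i^F$ is normal and unimodularly equivalent to $O(P,<_i^F)$, so that the canonical map $\rho:Z_{\Pi^F}\to\widehat Z_{\Pi^F}\subset\bP(\bC^\mL)$ restricts to an isomorphism $Z_{\Phi_i^F}\to H(\mM_{<_i^F})$ on each component; this gives a \emph{bijective morphism} $\rho|_{\widetilde Y^F}:\widetilde Y^F\to Y^F$ that is an isomorphism on each irreducible component. But that is not yet an isomorphism of varieties: a bijective morphism between reduced reducible varieties which restricts to isomorphisms on components can fail to be an isomorphism (compare the transversal union $V(xy)$ with the tangential union $V(y(y-x^2))$ of two lines -- there is a bijective morphism from the first to the second that is the identity on one branch and an isomorphism on the other, yet no inverse morphism exists). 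Since $\Pi^F$ itself is not known to be normal, $\rho$ need not be an isomorphism globally, and one must verify that the set-theoretic inverse of $\rho|_{\widetilde Y^F}$ is a morphism. The paper does exactly this: it passes to a normal dilation $k\Pi^F$, writes the inverse isomorphisms $\xi_i:H(\mM_{<_i^F})\to Z_{\Phi_i^F}$ in explicit monomial homogeneous coordinates $\xi_i(y)_\nu=\prod y_a^{c_a}$, checks that $\xi_i$ and $\xi_j$ give the same coordinates on $H(\mM_{<_i^F})\cap H(\mM_{<_j^F})$, and only then glues them into a morphism $\xi$ inverse to $\rho|_{\widetilde Y^F}$ (using that $I^F=\bigcap_i I_i$, so $Y^F$ is the scheme-theoretic union of its components). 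Your appeal to ``consistency because they are all restrictions of the single global map $\zeta^{-1}\pi^F_{F_0}$'' conflates the polytope projection with the variety map and does not supply this inverse; you need to add the explicit construction of $\xi$ (or an equivalent argument) to close part (b).
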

\begin{proof}
First observe that in the cases of $F=K$ and $F=F_0$ part (a) would follow from the claim that $\zeta^{-1}\pi^K_{F_0}(\Delta_\prec)=O(P,\prec)$ for all $\prec$. However, for any $a\in\mL$ we have \[\pi^K_{F_0}(\la_a^K)=\la_a^{F_0}=\zeta(v_a).\] The vertices of $\Delta_\prec$ are the $\la_a^K$ with $a\in C_\prec$ while the vertices of $O(P,\prec)$ are the $v_a$ with $a\in C_\prec$ and the claim follows.

For an arbitrary $F$ choose $i\in[1,m(F)]$. Let $\prec_1,\dots,\prec_k$ be those linearizations of $<$ for which $O(P,\prec_j)\subset O(P,<_i^F)$. These are precisely the linearizations of $<_i^F$. We have seen that $\pi^K_{F_0}(\Delta_{\prec_j})=\zeta(O(P,{\prec_j}))$ for all $j$. We also have $\pi_{F_0}^F\pi_F^K=\pi^K_{F_0}$, therefore, the image of the union of $\pi_F^K(\Delta_{\prec_j})$ under $\pi_{F_0}^F$ is $\zeta(O(P,<_i^F))$. To prove part (a) it remains to show that the union of $\pi_F^K(\Delta_{\prec_j})$ over all $j\in[1,k]$ is a face $\Pi^F$, this face will then be $\Phi_i^F$ and have the desired properties. 

A point $w\in U(I^F)$ is a functional on $U(I^F)^*$ taking value $w_a$ on the vertex $\la_a^F$ of $\Pi^F$. Therefore, it suffices to find a $w\in U(I^F)$ such that $w_a=0$ when $v_a$ is a vertex of one of the $O(P,\prec_j)$ and $w_a>0$ otherwise. Evidently, $v_a$ is a vertex of one of the $O(P,\prec_j)$ if and only if it is a vertex of $O(P,<_i^F)$. Consider any $w\in C(I^h,I^F)$, we have the convex function $f$ on $O(P,<)$ which is linear on every $O(P,<_l^F)$ and satisfies $f(v_a)=w_a$. Let $h$ be the linear function on $\bR^P$ such that $h(x)=f(x)$ for $x\in O(P,<_i^F)$. Define $w'\in\bR^\mL$ by $w'_{a}=h(v_a)-f(v_a)$. We have $w'_a=0$ when $a\in O(P,<_i^F)$ and $w'_a>0$ for all other $a$, the latter due to the convexity of $f$.


We move on to the proof of part (b). Before identifying $Y^F$ with the subvariety $\widetilde Y^F\subset Z_{\Pi^F}$ we identify it with a subvariety in $\widehat Z_{\Pi^F}$. By part (b) of Proposition~\ref{pitrivia} the integer points in $\Pi^F$ are parametrized by $\mL$, this lets us view $\widehat Z_{\Pi^F}$ as a subvariety in $\bP$. Moreover, the torus orbit closure in $\widehat Z_{\Pi^F}$ corresponding to the face $\Phi_i^F$ is cut out by the equations $X_a=0$ for $v_a\notin O(P,<_i^F)$. The linear bijection between $\Phi_i^F$ and $O(P,<_i^F)$ then shows that this orbit closure is precisely $H(\mM_{<_i^F})\subset\bP$ and we obtain $Y^F\subset\widehat Z_{\Pi^F}$. 

In particular, we have seen that $H(\mM_{<_i^F})$ is isomorphic to $\widehat Z_{\Phi_i^F}$. However, since $O(P,<_i^F)$ is normal and the linear bijection with $\Phi_i^F$ identifies the sets of integer points in the polytopes (which consist of their vertices), we see that $\Phi_i^F$ is also normal. This means that $\widehat Z_{\Phi_i^F}\simeq Z_{\Phi_i^F}$ which already shows that $\widetilde Y^F$ has the same irreducible components as $Y^F$.

If $\Pi^F$ is a normal polytope, then  $\widehat Z_{\Pi^F}=Z_{\Pi^F}$ and part (b) follows. We are left to consider the case of a non-normal $\Pi^F$. It is known (\cite[Theorem 2.2.12]{CLS}) that there exists an integer $k>0$ such that the dilation $k\Pi^F$ is a normal polytope. $Z_{\Pi_F}$ is then isomorphic to $\widehat Z_{k\Pi^F}$. Let $\Lambda^F=\Hom(T^F,\bC^*)$ be the lattice of integer points in $U(I^F)^*$. Then $Z_{\Pi^F}$ is embedded into the projective space $\bP(\bC^{k\Pi^F\cap\Lambda^F})$.

Above we have obtained an isomorphism between $H(\mM_{<_i^F})\subset Y^F$ and $Z_{\Phi_i^F}\subset\widetilde Y^F$, let us give an explicit description of this isomorphism $\xi_i$ in coordinates. Indeed, let $y\in H(\mM_{<_i^F})$ have homogeneous coordinates $(y_a,a\in\mL)$. Then the point $\xi_i(y)$ with homogeneous coordinates $(\xi_i(y)_\nu,\nu\in k\Pi^F\cap \Lambda^F)$ is defined as follows. First, if $\nu$ is not contained in the face $k\Phi_i^F$ of $k\Pi^F$, then $\xi_i(y)_\nu=0$. Now, since $\Phi_i^F$ is a normal polytope, if $\nu$ lies in $k\Phi_i^F$, then $\nu$ is equal to $\sum_{\la_a^F\in\Phi_i^F} c_a\la_a^F$ for integers $c_a\ge 0$ with $\sum c_a=k$. This allows us to set $\xi_i(y)_\nu=\prod y_a^{c_a}$. The fact that $y\in H(\mM_{<_i^F})$ ensures that $\xi_i(y)_\nu$ is independent of the choice of the $c_a$.

Moreover, it is evident that for a point $y\in H(\mM_{<_i^F})\cap H(\mM_{<_j^F})$ the defined values $\xi_i(y)_\nu$ and $\xi_j(y)_\nu$ coincide for any $\nu$. Therefore, the isomorphisms $\xi_i$ can be ``glued'' into a single bijective morphism $\xi$ from $Y^F$ to $\widetilde Y^F$. However, there exists a canonical torus-equivariant surjective morphism $\rho:Z_{\Pi^F}\to\widehat Z_{\Pi^F}$, see~\cite[Proposition 5.4.7]{CLS}. Since $\rho$ is torus-equivariant, it maps $Z_\Phi$ onto $\widehat Z_\Phi$ for every face $\Phi$ of $\Pi^F$. In particular, its restriction to $Z_{\Phi_i^F}$ is the natural isomorphism to $\widehat Z_{\Phi_i^F}=H(\mM_{<_i^F})$, i.e.\ the inverse of $\xi_i$. This shows that the restriction of $\rho$ to $\widetilde Y^F$ is the inverse of $\xi$ and they are both isomorphisms.
\end{proof}

\begin{remark}
Evidently, the proof of part (b) would be shorter and more conceptual if it were known that $\Pi^F$ is a normal polytope. This is seen to be the case when $F$ is the apex and when $F=K$, it would be interesting to know if this property holds in general.
\end{remark}

\begin{remark}
It can be easily deduced from the above that the map $\zeta^{-1}\pi^F_{F_0}$ restricts to a homeomorphism from the union of the faces $\Phi_i^F$ onto $O(P,<)$. In particular, this union of faces is contractible and also combinatorially equivalent to the subdivision $\{O(P,<_i^F),i\in[1,m(F)]\}$ as a polyhedral complex.
\end{remark}

\section{Degenerations of flag varieties}\label{flags}

In this section we apply the above constructions to obtain semitoric flat degenerations of type A Grassmannians and complete flag varieties. We start with the Grassmannian case which reduces to specializing the above construction.

Choose integers $n\ge 0$ and $k\in[1,n-1]$. Suppose that the lattice $\mL$ is composed of the $n\choose k$ elements $a_{i_1,\dots,i_k}$ with $1\le i_1<\dots<i_k\le n$ where $a_{i_1,\dots,i_k}\le a_{i'_1,\dots,i'_k}$ when $i_j\le i'_j$ for all $j$. One then has \[a_{i_1,\ldots,i_k}\land a_{j_1,\ldots,j_k}=a_{\min(i_1,j_1),\ldots,\min(i_k,j_k)}\] and \[a_{i_1,\ldots,i_k}\lor a_{j_1,\ldots,j_k}=a_{\max(i_1,j_1),\ldots,\max(i_k,j_k)}.\] 

Let $H$, $K$ and other attributes of $\mL$ be as in the above sections. Since the Pl\"ucker coordinates on the Grassmannian $\mathrm{Gr}_k(n)$ are also enumerated by $k$-subsets in $\{1,\dots,n\}$, we may identify the variables $X_a$ with the corresponding Pl\"ucker variables. This lets us view that the Pl\"ucker embedding as a map $\mathrm{Gr}_k(n)\hookrightarrow\bP$ given by the Pl\"ucker ideal $\mathcal I\subset R$. The following is a well known result due to Gonciulea and Lakshmibai. 
\begin{theorem}[\cite{GL}]\label{glgrassmann}
$I^h$ is an initial ideal of $\mathcal I$, in particular, the Hibi variety $H$ is a flat degeneration of $\mathrm{Gr}_k(n)$.
\end{theorem}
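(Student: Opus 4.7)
The plan is to exhibit a weight vector $w\in\bR^\mL$ for which $\initial_w\mathcal I=I^h$; Corollary \ref{gdegen} will then yield the claimed flat Gr\"obner degeneration of $\mathrm{Gr}_k(n)$ onto $H$. I would take $w_{a_{i_1,\dots,i_k}}=\sum_{j=1}^k f_j(i_j)$ for rapidly growing, strictly convex functions, for instance $f_j(x)=N^j x^2$ with $N$ much larger than $n$. Since meet and join in $\mL$ act coordinatewise on multi-indices and $\{\min(i,i'),\max(i,i')\}=\{i,i'\}$ as unordered pairs, summing gives $w_a+w_b=w_{a\land b}+w_{a\lor b}$ for every $a,b\in\mL$; thus $w\in U(I^h)$ and $I^h$ is $\grad_w$-homogeneous.

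The next step is to verify $I^h\subseteq\initial_w\mathcal I$. For every incomparable pair $I,J\in\mL$ one can select a quadratic Pl\"ucker relation of the form
\[X_IX_J-X_{I\land J}X_{I\lor J}\pm\sum_l X_{I_l}X_{J_l}=0,\]
where each $(I_l,J_l)$ is obtained from $(I,J)$ by a coordinatewise exchange that is strictly intermediate between the identity and the fully-sorted exchange producing $(I\land J,I\lor J)$. Strict convexity of $x\mapsto N^j x^2$, together with the rapid growth of $N^j$ which prevents cancellation between coordinates, forces every such intermediate pair to have strictly larger $w$-weight than the two extremal terms. Hence the $w$-initial form of this Pl\"ucker relation equals $\pm d(I,J)$, a generator of $I^h$; ranging over incomparable pairs yields the inclusion.

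The reverse inclusion would follow from a Hilbert function comparison. Gr\"obner degeneration preserves Hilbert series, so $R/\initial_w\mathcal I$ and $R/\mathcal I$ have the same graded dimensions. On the other hand, by Proposition \ref{toricvariety} the Hibi quotient $R/I^h$ is the semigroup algebra of the normal order polytope $O(P,<)$, and its graded dimensions count lattice points in its dilations. These counts agree in every degree with the classical enumeration of standard monomials on $\mathrm{Gr}_k(n)$ via semistandard Young tableaux of shape $k\times(n-k)$. Hence the Hilbert series of $R/I^h$ and $R/\mathcal I$ coincide, which upgrades $I^h\subseteq\initial_w\mathcal I$ to equality.

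The main obstacle is the middle step: although the extremal exchange is easy to isolate, verifying that \emph{every} intermediate exchange in \emph{every} Pl\"ucker relation genuinely has strictly larger $w$-weight requires a careful bookkeeping of how the fast-growing coefficients $N^j$ interact with the combinatorics of the exchanges. A conceptually cleaner route is to invoke Hodge's standard monomial theory: every non-standard product admits a straightening $X_IX_J\equiv X_{I\land J}X_{I\lor J}+\text{(strictly smaller standard pairs)}\pmod{\mathcal I}$ in a suitable linear order on pairs, and any weight $w$ respecting that order realises the Hibi relations as initial forms directly, at the cost of importing the full machinery of \cite{GL}.
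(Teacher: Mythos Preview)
The paper does not prove this statement; it simply cites \cite{GL}. So there is no in-paper proof to compare against, and your sketch must stand on its own.

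It does not. The explicit weight you propose is wrong already for $\mathrm{Gr}_2(4)$. With $f_j(x)=N^jx^2$ one gets
\[
w_{12}+w_{34}=10N+20N^2,\qquad w_{13}+w_{24}=w_{14}+w_{23}=5N+25N^2,
\]
so for $N>1$ the minimum in the Pl\"ucker relation $X_{12}X_{34}-X_{13}X_{24}+X_{14}X_{23}$ is attained by the monomial $X_{12}X_{34}$, not by the Hibi binomial $d(a_{14},a_{23})$. Reversing the growth rates (say $f_j(x)=N^{k-j}x^2$) repairs this particular example, but the deeper gap remains: your description of the non-Hibi terms as ``coordinatewise exchanges strictly intermediate between the identity and the fully-sorted exchange'' is not accurate --- in the example above the pair $(\{1,2\},\{3,4\})$ is not intermediate between $(\{1,4\},\{2,3\})$ and $(\{1,3\},\{2,4\})$ in any evident sense --- and proving that for every incomparable $I,J$ there exists an element of $\mathcal I$ whose $w$-minimal part is exactly $d(I,J)$ is precisely the nontrivial content of \cite{GL}. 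You acknowledge this in your final paragraph; the route you sketch there (Hodge's straightening law plus a Hilbert-function comparison) is essentially the argument of \cite{GL}, and is what one should invoke.
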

We immediately obtain the following.
\begin{cor}
For every face $F$ of $K$ the ideal $I^F$ is an initial ideal of $\mathcal I$ and the semitoric variety $Y^F$ is a flat degeneration of $\mathrm{Gr}_k(n)$. The structure of $Y^F$ is described by Theorems~\ref{main} and~\ref{main2}.
\end{cor}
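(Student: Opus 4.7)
The plan is to obtain the corollary as a direct application of transitivity of initial ideals combined with the Gröbner degeneration machinery of Section~\ref{preliminaries}, once Theorem~\ref{glgrassmann} is in hand.

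First I would pick witnesses for the two initial ideal relations. By Theorem~\ref{glgrassmann} there exists $w\in\bR^\mL$ with $\initial_w\mathcal I=I^h$. By the definition of $I^F$ there exists $w'\in C(I^h,I^F)$, so $\initial_{w'}I^h=I^F$. Then I would invoke the transitivity of initial ideals: by \cite[Proposition 1.13]{S} (already used earlier in this paper), for all sufficiently small $\varepsilon>0$ one has
\[
\initial_{w+\varepsilon w'}\mathcal I=\initial_{w'}(\initial_w\mathcal I)=\initial_{w'}I^h=I^F.
\]
This proves the first assertion, that $I^F$ is an initial ideal of the Plücker ideal.

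Next I would verify the hypotheses needed to turn this into a flat degeneration of varieties. The Plücker ideal $\mathcal I$ is homogeneous with respect to the standard total-degree grading, hence so is $I^F$. Moreover $I^F$ is radical: indeed $I^m$ is a squarefree monomial ideal, and by the same argument used earlier in Section~\ref{semitoric} (via \cite[Proposition 3.3.7]{HH}), radicality of the initial ideal $I^m$ of $I^F$ forces $I^F$ itself to be radical. Thus Corollary~\ref{gdegen} applies with $X=\mathrm{Gr}_k(n)$ and $X^{w+\varepsilon w'}=Y^F$, yielding a flat family in $\bP\times\bC$ whose general fibre is $\mathrm{Gr}_k(n)$ and whose special fibre is $Y^F$.

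Finally, the geometric description of $Y^F$ as a semitoric variety — its irreducible components, the regular subdivision of the order polytope they induce, and the embedding into the toric variety associated with the weight polytope $\Pi^F$ — is simply the content of Theorems~\ref{main} and~\ref{main2} applied to the specific distributive lattice $\mL$ of $k$-subsets under the componentwise order, with no extra work required. There is essentially no obstacle here: the entire proof is a one-line use of transitivity of initial ideals plus bookkeeping of radicality, with the substantive content already contained in Theorem~\ref{glgrassmann} and in the main theorems of the previous two sections.
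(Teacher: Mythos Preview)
Your proof is correct and follows the same line the paper intends; the paper simply writes ``We immediately obtain the following'' and gives no separate argument. The only cosmetic difference is that instead of invoking \cite[Proposition 1.13]{S} directly, you could cite part~\ref{IJ'J} of Theorem~\ref{gfans}, which is the paper's own packaging of that transitivity statement: $C(\mathcal I,I^h)$ is nonempty by Theorem~\ref{glgrassmann} and $C(I^h,I^F)$ is nonempty by definition, hence $C(\mathcal I,I^F)$ is nonempty.
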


\begin{remark}
Some of the Grassmannians in other types also have flat degenerations that are Hibi varieties. For instance, the results in~\cite{L} and~\cite{Ca} show that the toric variety associated with a type C Gelfand-Tsetlin polytope is a flat degeneration of the corresponding type C flag variety. However, the type C Gelfand-Tsetlin polytope corresponding to a fundamental weight is seen to be an order polytope which makes the toric degeneration a Hibi variety. We may then apply the results in Sections~\ref{semitoric} and~\ref{weightpolytopes} to obtain a family of semitoric degenerations of any type C Grassmannian.
\end{remark}

We move on to case of the complete flag variety. We consider a different lattice $\mL$. We choose an integer $n\ge 2$ and let $\mL$ consist of elements $a_{i_1,\dots,i_k}$ where $k\in[1,n-1]$ and $1\le i_1<\dots<i_k\le n$. We set $a_{i_1,\dots,i_k}\le a_{i'_1,\dots,i'_l}$ when $k\ge l$ and $i_j\le i'_j$ for $j\in[1,l]$. Note that when $k\ge l$ one has \[a_{i_1,\ldots,i_k}\land a_{i'_1,\dots,i'_l}=a_{\min(i_1,i'_1),\ldots,\min(i_l,i'_l),i_{l+1},\ldots,i_k}\] and \[a_{i_1,\ldots,i_k}\lor a_{i'_1,\ldots,i'_l}=a_{\max(i_1,i'_1),\ldots,\max(i_l,i'_l)}.\] 

Again, we will denote the attributes of $\mL$ as in the above sections, we will also use the shorthand notation $X_{i_1,\dots,i_k}=X_{a_{i_1,\dots,i_k}}$ for the variables in $R$. Note that $R$ can be viewed as the multihomogeneous coordinate ring of the product \[\bP_\mult=\bP(\wedge^1\bC^n)\times\dots\times\bP(\wedge^{n-1}\bC^n)\] where the variable $X_{i_1,\dots,i_k}$ corresponds to the basis vector $e_{i_1}\wedge\dots\wedge e_{i_k}\in\wedge^k\bC^n$. We will call an ideal $I\subset R$ \emph{multihomogeneous} if for each $k\in[1,n-1]$ it is homogeneous with respect to total degree in the variables $X_{i_1,\dots,i_k}$. Every multihomogeneous ideal cuts out a subvariety in $\bP_\mult$. It is easily seen that $I^h$ is multihomogeneous and, consequently, so is $I^F$ for every face $F$ of $K$. We denote by $H_\mult$ the subvariety in $\bP_\mult$ cut out by $I^h$ and by $Y^F_\mult$ the subvariety cut out by $I^F$. Now, recall that the Pl\"ucker embedding of the variety $\mathcal F_n$ of complete flags realizes $\mathcal F_n$ as a subvariety in $\bP_\mult$ and is given by the Pl\"ucker ideal $\mathcal I\subset R$.
\begin{theorem}[\cite{GL}]\label{glflags}
$I^h$ is an initial ideal of $\mathcal I$ and the variety $H_\mult$ is a flat degeneration of $\mathcal F_n$.
\end{theorem}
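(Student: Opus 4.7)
The plan is to exhibit an explicit weight $w\in\bR^\mL$ satisfying $\initial_w\mathcal I=I^h$ and then to promote the resulting Gr\"obner degeneration to a multiprojective one. For the weight, set
\[ w_{a_{i_1,\dots,i_k}}=\sum_{j=1}^k g(i_j,j) \]
for a function $g:\bZ_{>0}\times\bZ_{>0}\to\bR$ to be chosen. A direct inspection of the explicit meet and join formulas in $\mL$ shows that for any $a,b\in\mL$ the multiset of pairs $(\text{value},\text{position})$ appearing in the two elements coincides with that appearing in $a\land b$ and $a\lor b$; hence $w_a+w_b=w_{a\land b}+w_{a\lor b}$ for every pair, and in particular every Hibi binomial $d(a,b)$ is $\grad_w$-homogeneous.

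To make the Hibi binomials actually appear as initial forms of the Pl\"ucker generators, I would invoke Hodge's straightening law: every generator of $\mathcal I$ may be written as
\[ X_aX_b-X_{a\land b}X_{a\lor b}-\sum_\alpha c_\alpha X_{a_\alpha}X_{b_\alpha} \]
with each remainder pair $\{a_\alpha,b_\alpha\}$ strictly ``smaller'' than $\{a,b\}$ in a suitable order on incomparable pairs in $\mL$. Choosing $g(i,j)$ strictly convex in $i$ and sufficiently steep in $j$ (for instance $g(i,j)=C^{j}i^{2}$ with $C$ large) makes every $w_{a_\alpha}+w_{b_\alpha}$ strictly exceed the common value $w_a+w_b=w_{a\land b}+w_{a\lor b}$. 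The initial form of each generator is then precisely the Hibi binomial, yielding $I^h\subset\initial_w\mathcal I$.

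Upgrading this inclusion to equality uses multigraded Hilbert functions. Both $\mathcal I$ and $I^h$ are multihomogeneous with respect to the grading in which $X_{i_1,\dots,i_k}$ has multidegree $e_k\in\bZ^{n-1}$. Since $\mathcal I$ defines $\mathcal F_n\subset\bP_\mult$, the Borel--Weil theorem gives $\dim(R/\mathcal I)_\la=\dim V_\la$, where $V_\la$ is the irreducible $GL_n$-module attached to $\la=(\la_1,\dots,\la_{n-1})$. On the other hand, $(R/I^h)_\la$ has a basis of standard monomials of multidegree $\la$, i.e.\ weak chains in $\mL$ containing exactly $\la_k$ elements of cardinality $k$; a classical bijection identifies these chains with the integer points of the Gelfand--Tsetlin polytope associated with $\la$, and these number $\dim V_\la$ by Weyl's formula. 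Since the multigraded analogue of Theorem~\ref{flatfamily} preserves multigraded Hilbert functions, the equalities $\dim(R/\initial_w\mathcal I)_\la=\dim(R/I^h)_\la$ for all $\la$ force equality of the ideals.

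For the second assertion, the Rees-algebra proof of Theorem~\ref{flatfamily} transports verbatim to the multigraded setting: the $\grad_w$-filtration of $R$ respects the multigrading, so the resulting flat $\bC[t]$-algebra is itself multigraded, and multiprojective Proj over $\bC[t]$ produces a flat family $\mathcal X\subset\bP_\mult\times\bC$ with generic fibre $\mathcal F_n$ and special fibre $H_\mult$. The main obstacle is the Hilbert function comparison of Step~3: one needs the classical (nontrivial) bijection between weak chains in $\mL$ and Gelfand--Tsetlin patterns, together with a careful matching of the resulting multidegree statistics with the weights of $GL_n$-representations.
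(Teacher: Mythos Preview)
The paper does not prove this theorem; it is quoted from \cite{GL} without argument, and the paper's own contribution in this section begins with the Corollary that follows. So there is no proof in the paper to compare yours against, but I can still assess your outline.

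Your overall strategy mirrors that of Gonciulea--Lakshmibai and is sound in broad strokes. The observation that $w_a+w_b=w_{a\land b}+w_{a\lor b}$ for any weight of your additive form is correct: the multiset-of-pairs argument goes through exactly because of the explicit formulas for $\land$ and $\lor$ in this $\mL$. The Hilbert-function comparison via Borel--Weil and the bijection between standard chains and Gelfand--Tsetlin patterns is a legitimate route to upgrade the inclusion $I^h\subset\initial_w\mathcal I$ to equality, and is arguably cleaner than checking a Gr\"obner basis directly.

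The straightening step, however, contains two imprecisions that together amount to the actual content of the theorem. First, the standard Pl\"ucker generators of $\mathcal I$ do not individually have the form $X_aX_b-X_{a\land b}X_{a\lor b}-(\text{tail})$; what one really uses is that for each incomparable pair $\{a,b\}$ there exists \emph{some} element of $\mathcal I$ of that shape (the straightening relation), and one must argue that these suffice. Second, and more seriously, the claim that $g(i,j)=C^{j}i^{2}$ makes every tail weight strictly larger is asserted but not verified. The tail pairs $\{a_\alpha,b_\alpha\}$ in the flag-variety straightening law are controlled by a specific partial order on incomparable pairs, and checking that a given numerical weight is compatible with that order is precisely the delicate combinatorial analysis carried out in \cite{GL}. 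Naming a plausible $g$ is not a proof; you would need either to reproduce that analysis or to cite it, at which point you are essentially invoking \cite{GL} as the paper does.
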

\begin{cor}
For every face $F$ of $K$ the variety $Y^F_\mult$ is a flat degeneration of $\mathcal F_n$.
\end{cor}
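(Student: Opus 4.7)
My plan is to combine Theorem~\ref{glflags} with the Gr\"obner degeneration framework of Section~\ref{preliminaries}. First I would check that $I^F$ is itself an initial ideal of the Pl\"ucker ideal $\mathcal I$. By Theorem~\ref{glflags}, there exists $u \in \bR^\mL$ with $\initial_u \mathcal I = I^h$, and choosing any $w$ in the relative interior of $F$ yields $\initial_w I^h = I^F$. The iterated initial ideal identity $\initial_w(\initial_u \mathcal I) = \initial_{u+\varepsilon w}\mathcal I$ for sufficiently small $\varepsilon > 0$, which is the same identity from~\cite[Proposition 1.13]{S} already invoked in Section~\ref{weightpolytopes}, then gives $I^F = \initial_{u+\varepsilon w}\mathcal I$.

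Next I would invoke Theorem~\ref{flatfamily} with weight $u+\varepsilon w$ to obtain a flat $\bC[t]$-algebra $S$ with $S/\langle t-a\rangle \simeq R/\mathcal I$ for $a\ne 0$ and $S/\langle t\rangle \simeq R/I^F$. To obtain a multiprojective flat degeneration one needs to upgrade this to a multigraded statement. The $\bZ^{n-1}$-multigrading on $R$ that assigns the variable $X_{i_1,\dots,i_k}$ multidegree equal to the $k$-th basis vector lifts to $S$ by declaring $t$ to have multidegree $0$; this is legitimate precisely because $\mathcal I$ and $I^F$ are both multihomogeneous, as already noted in the paragraph preceding the corollary. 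Each multigraded component $S_\bd$ is then a finitely generated submodule of the flat $\bC[t]$-module $S$, hence torsion-free and therefore free over the PID $\bC[t]$. Taking the multigraded $\mathrm{Proj}$ of $S$ over $\bC[t]$ yields a flat family $\mathcal Y \subset \bP_\mult \times \bC$ whose generic fiber is $\mathcal F_n$ under the Pl\"ucker embedding and whose special fiber is $Y^F_\mult$.

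The main obstacle is the third step, namely verifying that the standard singly-graded Gr\"obner flat family descends to a flat family of multiprojective schemes. The key observation that makes this work is that the multigraded Hilbert functions of $\mathcal I$ and $I^F$ agree component by component (a direct consequence of the fact that $I^F$ is a multihomogeneous initial ideal of the multihomogeneous ideal $\mathcal I$), which forces each $S_\bd$ to be free of constant rank over $\bC[t]$. Beyond this flatness upgrade the remaining content is bookkeeping: the generic fiber of $\mathcal Y$ is cut out in $\bP_\mult$ by $\mathcal I$ and thus coincides with the Pl\"ucker image of $\mathcal F_n$, while the special fiber is cut out by $I^F$ and so is $Y^F_\mult$ by definition.
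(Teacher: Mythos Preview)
Your argument is correct and the first two steps match the paper's proof: both deduce that $I^F$ is an initial ideal of $\mathcal I$ from Theorem~\ref{glflags} (the paper invokes part~\ref{IJ'J} of Theorem~\ref{gfans} where you invoke the equivalent \cite[Proposition~1.13]{S}), and both then apply Theorem~\ref{flatfamily} to obtain a flat family of algebras.

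The difference lies in how the algebraic flat family is converted into a flat family inside $\bP_\mult$. You pass directly to $\mathrm{MultiProj}$ of the $\bZ^{n-1}$-graded Rees-type algebra $S$, using that each graded piece $S_\bd$ is a finitely generated torsion-free $\bC[t]$-module and hence free; the special fibre is then $\mathrm{MultiProj}(R/I^F)$, which is reduced because $I^F$ is radical and therefore equals $Y^F_\mult$. The paper instead verifies that $I^F$ is the \emph{full} vanishing ideal of $Y^F_\mult$, i.e.\ that $I^F$ is saturated with respect to each irrelevant ideal $\mathfrak M_k=\langle X_{i_1,\dots,i_k}\rangle$. This saturatedness is checked via the primary decomposition $I^F=\bigcap_i I_{<_i^F}$ from Section~\ref{semitoric}: were some $(I^F:\mathfrak M_k)$ strictly larger than $I^F$, one of the prime components $I_{<_i^F}$ would contain all $X_{i_1,\dots,i_k}$, contradicting that $\mM_{<_i^F}$ contains a maximal chain of $\mL$ and hence an element with $k$ subscripts.

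Your route is more self-contained and does not rely on the structural description of $I^F$ obtained in Theorem~\ref{main}; it would go through for any multihomogeneous radical initial ideal. The paper's route, while less direct, yields the extra information that $R/I^F$ is the actual multihomogeneous coordinate ring of $Y^F_\mult$ (equivalently, that $I^F$ is saturated), a fact with independent content beyond the corollary itself.
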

\begin{proof}
In view of the above theorem $I^F$ is an initial ideal of $\mathcal I$. By Theorem~\ref{flatfamily} the algebra $R/I^F$ is a flat degeneration of the algebra $R/\mathcal I$. To obtain a flat family of multiprojective varieties over $\bC$ with $\mathcal F_n$ as the general fiber and $Y^F_\mult$ as the special fiber it remains to show that $R/I^F$ is the multihomogeneous coordinate ring of $Y^F_\mult$. This means showing that $I^F$ is the entire vanishing ideal of $Y^F_\mult$. However, by a multiprojective version of Hilbert's Nullstellensatz, $I^F$ is the vanishing ideal of its zero set if and only if it is radical and saturated with respect to the ``irrelevant'' ideals $\mathfrak M_k=\langle \{X_{i_1,\dots,i_k}\}\rangle$ for all $k$ (see Remark~\ref{hilbert}). The latter condition can be written as $(I^F:\mathfrak M_k)=I^F$ where $(I^F:\mathfrak M_k)$ is the ideal $\{r\in R| r\mathfrak M_k\subset I^F\}$. We have already seen that $I^F$ is radical while the saturatedness can be checked as follows. 

In Section~\ref{semitoric} we described $I^F$ as the intersection of prime ideals $I_1,\dots, I_{m(F)}$. Suppose that $(I^F:\mathfrak M_k)\neq I^F$ for some $k$, then for some $r\in R$ with $r\notin I^F$ we have $r\mathfrak M_k\subset I^F$. We must have $r\notin I_i$ for some $i$ and, in view of $I_i$ being prime, all $X_{i_1,\dots,i_k}\in I_i$. By the definition of the ideals $I_i$ this means that all $a_{i_1,\dots,i_k}\notin\mM_{<_i^F}$. However, $\mM_{<_i^F}$ contains at least one chain which is maximal in $\mL$ and such a chain is seen to contain at least one element of the form $a_{i_1,\dots,i_k}$.
\end{proof}

\begin{remark}\label{hilbert}
Somewhat surprisingly, the authors were unable to find this ``multiprojective Nullstellensatz'' in the literature, see~\cite{MO}. A short proof can be obtained by looking at the affine cones. Indeed, for a multihomogeneous ideal $I\subset R$ let $V(I)\subset\bP_\mult$ bet its zero set in $\bP_\mult$ and let $V_{\mathrm{aff}}(I)$ be its zero set in the affine space $\mathbb A=\wedge^1\bC^n\oplus\dots\oplus\wedge^{n-1}\bC^n$. Let $\mathbb A_k\subset\mathbb A$ be the coordinate subspace where all $X_{i_1,\dots,i_k}=0$. Then for multihomogeneous ideals $I,J\subset R$ we have $V(I)=V(J)$ if and only if \[V_{\mathrm{aff}}(I)\backslash(\mathbb A_1\cup\dots\cup \mathbb A_k)=V_{\mathrm{aff}}(J)\backslash(\mathbb A_1\cup\dots\cup\mathbb A_k).\] Therefore, $I$ is the vanishing ideal of $V(I)$ if and only if $V_{\mathrm{aff}}(I)$ coincides with the closure $\overline{V_{\mathrm{aff}}(I)\backslash(\mathbb A_1\cup\dots\cup \mathbb A_k)}$. The given condition now follows via standard affine algebraic geometry.
\end{remark}

Having established that the varieties $Y^F_\mult$ are flat degenerations of the flag variety, we next aim to show that these varieties are semitoric and describe them with an analog of Theorem~\ref{main}. We start by describing the poset $(P,<)$.

Consider the set $\widetilde P$ composed of elements $p_{r,s}$ where $1\le r\le s\le n$ and $(r,s)$ is not one of $(1,1)$ and $(n,n)$. Define a partial order  $\widetilde<$ on $\widetilde P$ by setting $p_{r,s}\widetilde\le p_{u,v}$ when $r\le u$ and $s\le v$. The poset $(\widetilde P,\widetilde<)$ is easy to visualize, for instance, when $n=4$ its Hasse diagram looks like this:
\begin{equation}
\begin{tikzcd}[row sep=1mm,column sep=tiny]\label{n=4}
&p_{2,2}\arrow[rd]&&p_{3,3}\arrow[rd]\\
p_{1,2}\arrow[rd]\arrow[ru]&&p_{2,3}\arrow[rd]\arrow[ru]&&p_{3,4}\\
&p_{1,3}\arrow[rd]\arrow[ru]&&p_{2,4}\arrow[ru]\\
&&p_{1,4}\arrow[ru]
\end{tikzcd}
\end{equation}
\begin{proposition}\label{gtposet}
The poset $(P,<)$ is isomorphic to $(\widetilde P,\widetilde<)$.
\end{proposition}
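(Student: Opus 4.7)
The plan is to characterize the join-irreducible elements of $\mL$ explicitly and then exhibit a concrete bijection with $\widetilde P$ that preserves the order.

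First I would describe the cover relation in $\mL$ from above: for $a=a_{i_1,\dots,i_k}$, a short case analysis shows that the elements covered by $a$ from below are exactly (a) the ``decrement covers'' $a_{i_1,\dots,i_{p_0}-1,\dots,i_k}$, one for each $p_0\in[1,k]$ with $i_{p_0}-i_{p_0-1}\ge 2$ (setting $i_0=0$), together with (b) a single ``extension cover'' $a_{i_1,\dots,i_k,n}$ whenever $k\le n-2$ and $i_k<n$. Checking that no strict intermediate lies between such $b$ and $a$ is routine: candidates of length greater than $|b|+1$ admit an obvious intermediate of one-less length, and same-length or length-$|b|$ candidates are ruled out coordinate-wise using the definition of $\le$ in $\mL$. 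Requiring $a$ to cover exactly one element then forces the complement $\{1,\dots,n\}\setminus\{i_1,\dots,i_k\}$ to be a single nonempty contiguous interval $[L,R]\subseteq[1,n]$, and one excludes the two degenerate pairs $(L,R)=(n,n)$ (the lattice minimum $a_{1,\dots,n-1}$, which covers nothing) and $(L,R)=(1,n)$ (the empty subset, not in $\mL$).

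Next I would define $\phi\colon P\to\widetilde P$ by sending the JI element with complement $[L,R]$ to $p_{R-L+1,\,n-L+1}$. Its inverse sends $p_{r,s}$ to the element with complement $[n-s+1,\,n-s+r]$; this is well-defined precisely for $(r,s)\in\widetilde P$, since the pairs $(1,1)$ and $(n,n)$ excluded from $\widetilde P$ correspond exactly to the two excluded $(L,R)$, so $\phi$ is a bijection of underlying sets.

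Finally, to check $\phi$ is order-preserving I would compare two JI elements $a,a'$ with complements $[L,R],[L',R']$. Their underlying subsets are $\{1,\dots,L-1,R+1,\dots,n\}$ and $\{1,\dots,L'-1,R'+1,\dots,n\}$; writing out the $j$-th coordinate ($j$ for $j<L$ and $R+j-L+1$ for $j\ge L$, with the analogous formula for $a'$) and unpacking the definition of $\le$ in $\mL$ shows that $a\le a'$ is equivalent to the two inequalities $L'\le L$ and $R-L\le R'-L'$. Under the substitution $r=R-L+1$, $s=n-L+1$ these become $r\le r'$ and $s\le s'$, which is precisely $\phi(a)\widetilde\le\phi(a')$. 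I expect the enumeration of covers in the first step to be the main obstacle, since the convention in $\mL$ that ``larger'' corresponds to a smaller subset with a larger prefix makes the combinatorics slightly counterintuitive; once that is in hand, the remaining bijection and order-compatibility are short direct computations.
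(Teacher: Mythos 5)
Your proof is correct, but it takes a genuinely different route from the paper. The paper does not analyze join-irreducibles at all: it writes down an explicit map $\phi:\mL\to\mathcal J(\widetilde P,\widetilde<)$ sending $a_{i_1,\dots,i_k}$ to a concrete order ideal, checks that this is a lattice isomorphism, and then the poset isomorphism $(P,<)\cong(\widetilde P,\widetilde<)$ follows from Birkhoff's representation theorem (isomorphic distributive lattices have isomorphic posets of join-irreducibles). You instead work inside $\mL$ directly: you enumerate the cover relations below $a_{i_1,\dots,i_k}$, deduce that the join-irreducibles are exactly the elements whose complement in $\{1,\dots,n\}$ is a contiguous interval $[L,R]\ne[n,n],[1,n]$, and then verify that $a\le a'$ amounts to $L'\le L$ and $R-L\le R'-L'$, which matches $\widetilde\le$ under $(r,s)=(R-L+1,\,n-L+1)$. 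I checked your cover enumeration and the order criterion; both are right (in particular the count of covers is the number of contiguous blocks of the complement not containing $n$, plus one if $n$ lies in the complement and the complement has at least two elements, which does force a single block for a join-irreducible). What your approach buys is an explicit description of which elements of $\mL$ are join-irreducible, which the paper never states; what it loses is the explicit lattice isomorphism $\phi=\iota$, which the paper relies on later (e.g.\ in the proof of Lemma~\ref{multitoric} it uses which marked elements $p_{r,r}$ lie in $\iota(a_{i_1,\dots,i_k})$), so for the purposes of the rest of the paper one would still have to supply that description separately. One cosmetic remark: your phrase ``candidates of length greater than $|b|+1$'' misuses $|b|$ (which the paper reserves for height); you mean candidates whose subscript length exceeds $k+1$.
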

\begin{proof}
Although this fact can be found in~\cite[Proposition 6.1]{M}, we give a more direct proof by identifying the elements of $\mL$ with the order ideals in $(\widetilde P,\widetilde<)$. Indeed, define a map $\phi:\mL\to\mathcal J(\widetilde P,\widetilde<)$ by the following rule. The set $\phi(a_{i_1,\dots,i_k})$ contains the $i_j-j$ elements $p_{1,n-j+1},\dots,p_{i_j-j,n-j+1}$ for each $j\in[1,k]$ as well as all $p_{r,s}$ with $s<n-k+1$. Then one sees that $\phi(a_{i_1,\dots,i_k})$ is indeed an order ideal and that $\phi$ is an isomorphism between the lattices $\mL$ and $\mathcal J(\widetilde P,\widetilde<)$.
\end{proof}

We will identify $(P,<)$ with $(\widetilde P,\widetilde<)$ viewing the $p_{r,s}$ as elements of $P$. Furthermore, we will assume that the map $\iota$ coincides with the map $\phi$ from the above proof. We will not explicitly identify the $p_{r,s}$ with join-irreducible elements in $\mL$ although such an identification can be found in~\cite{M}. For a point $x\in\bR^P$ we will denote its coordinate corresponding to $p_{r,s}$ by $x_{r,s}$ for short.


Consider an order $<'$ on $P$ which is stronger than $<$. We define the polytope $O_\mult(P,<')\subset O(P,<')$ consisting of points $x$ for which $x_{r,r}=\frac{n-r}{n-1}$. In other words, in terms of the visualization in~\eqref{n=4} we can describe $O_\mult(P,<')$ as the section of $O(P,<')$ by the $n\choose 2$-dimensional subspace where the coordinates corresponding to the top row are equal to $\frac{n-2}{n-1},\dots,\frac1{n-1}$. 

$O_\mult(P,<)$ can be identified with a \emph{marked order polytope}, a notion due to~\cite{ABS}. Consider an arbitrary poset $(\mP,\ll)$ with a subset $M\subset \mP$ of \emph{marked elements} which contains all maximal and minimal elements. For $\la\in\bR^M$ the corresponding marked order polytope $O_{M,\la}(\mP,\ll)\subset\bR^{\mP}$ consists of $x\in\bR^{\mP}$ for which $x_p=\la_p$ for all $p\in M$ and $x_p\ge x_q$ whenever $p\ll q$. In particular, let the set $\overline P$ be obtained from $P$ by adding elements $p_{1,1}$ and $p_{n,n}$. We extend any order $\ll$ on $P$ to $\overline P$ by making $p_{1,1}$ the unique minimal element and $p_{n,n}$ the unique maximal element. Let the subset $M\subset\overline P$ of marked elements consist of $p_{r,r}$ with $r\in[1,n]$. Identify $\bR^P$ with the affine subspace in $\bR^{\overline P}$ consisting of $x$ with $x_{1,1}=1$ and $x_{n,n}=0$. Then $O_\mult(P,<')$ is identified with $O_{M,\mu}(\overline P,<')$ where $\mu\in\bR^M$ is given by $\mu_{p_{r,r}}=\frac{n-r}{n-1}$.

Now recall that we have the inclusion $\mathcal J(P,<')\subset\mathcal J(P,<)$ and the sublattice $\mM_{<'}=\iota^{-1}(\mathcal J(P,<'))$. This lets us view $I^h(\mM_{<'})$ as a subspace of $I^h$ and define $I_{<'}\subset R$ as the ideal generated by $I^h(\mM_{<'})$ and all $X_{i_1,\dots,i_k}$ with $a_{i_1,\dots,i_k}\notin\mM_{<'}$. In other words, this is the vanishing ideal for the embedding $H(\mM_{<'})\subset\bP$ which was discussed in Section~\ref{semitoric}. This ideal is easily seen to be multihomogeneous.

\begin{lemma}\label{multitoric}
The subvariety cut out by $I_{<'}$ in $\bP_\mult$ is isomorphic to the toric variety $Z_{O_{M,\mu}(\overline P,<')}$ associated with the marked order polytope ${O_{M,\mu}(\overline P,<')}$.
\end{lemma}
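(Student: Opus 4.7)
The plan is to identify the multi-homogeneous coordinate ring $R/I_{<'}$ with the multi-homogeneous coordinate ring of $Z_{O_{M,\mu}(\overline P,<')}$ embedded multi-projectively via the Minkowski decomposition of $O_{M,\mu}$ by ``column level''. First, mimicking Proposition~\ref{kernel}, I would define a homomorphism $\psi_{<'}\colon R\to\bC[\{z_p\}_{p\in P},t_1,\dots,t_{n-1}]$ by $\psi_{<'}(X_{i_1,\dots,i_k})=t_k\prod_{p\in\iota(a)}z_p$ when $a=a_{i_1,\dots,i_k}\in\mM_{<'}$ and $\psi_{<'}(X_{i_1,\dots,i_k})=0$ otherwise, and show $\ker\psi_{<'}=I_{<'}$. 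The inclusion $I_{<'}\subset\ker\psi_{<'}$ is immediate from the lattice identities $\iota(a)\cup\iota(b)=\iota(a\lor b)$ and $\iota(a)\cap\iota(b)=\iota(a\land b)$ together with the multi-homogeneity of $I^h$ (the column multi-degrees of $X_aX_b$ and $X_{a\land b}X_{a\lor b}$ coincide); the reverse inclusion follows because $R/I_{<'}$ has a standard-monomial basis indexed by chains $a_1\le\dots\le a_l$ in $\mM_{<'}$, and because column counts decrease weakly along such a chain, the image $\psi_{<'}(X_{a_1}\cdots X_{a_l})$ determines the chain (the sorted multiset of $t_k$-exponents recovers the individual column counts, and the $z_p$-exponents recover the ascending sequence of order ideals $\iota(a_j)$).

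This identifies $R/I_{<'}$ with the multi-graded semigroup algebra $\bC[\Sigma_{<'}]$, where $\Sigma_{<'}\subset\bZ^P\oplus\bZ^{n-1}$ is generated by $(\bm 1_{\iota(a)},e_{|a|_{\mathrm{col}}})$ for $a\in\mM_{<'}$. To match this with $Z_{O_{M,\mu}(\overline P,<')}$, I would introduce the rational polyhedral cone $\widetilde C_{<'}\subset\bR^{\overline P}\oplus\bR^{n-1}$ defined by the conditions $m_k\ge 0$, the marked-diagonal relations $x_{p_{r,r}}=\sum_{k=1}^{n-r}m_k$ for $r\in[1,n]$, and $x_p\ge x_q$ whenever $p<'q$ in $\overline P$. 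A direct computation using the description of $\iota=\phi$ from Proposition~\ref{gtposet} shows that $p_{r,r}\in\iota(a_{i_1,\dots,i_k})$ iff $k\le n-r$; it follows that every generator of $\Sigma_{<'}$ lies in $\widetilde C_{<'}$, the slice of $\widetilde C_{<'}$ at $m=\frac{1}{n-1}(1,\dots,1)$ is exactly $O_{M,\mu}(\overline P,<')$, and the Minkowski decomposition of $O_{M,\mu}$ into level-$k$ contributions realizes $Z_{O_{M,\mu}(\overline P,<')}$ as a multi-projective toric variety whose multi-homogeneous coordinate ring is the semigroup algebra of $\widetilde C_{<'}\cap(\bZ^{\overline P}\oplus\bZ^{n-1})$.

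The remaining and most delicate step is to show that $\Sigma_{<'}$ is saturated in its cone, i.e.\ equals $\widetilde C_{<'}\cap(\bZ^{\overline P}\oplus\bZ^{n-1})$; this is the main obstacle. It amounts to a multi-graded normality statement for the marked order polytope with marks $\mu_{p_{r,r}}=(n-r)/(n-1)$, reflecting the fundamental-weight structure of type A. I expect to argue via a greedy level-by-level decomposition: given an integer point $(x,m)\in\widetilde C_{<'}$, for $k$ running from $n-1$ down to $1$ successively extract $m_k$ indicator functions of order ideals of the form $\iota(a)$ with $a\in\mM_{<'}$ of column count $k$ that fit inside the remaining $x$, using the marked constraints on the diagonal to guarantee admissibility at each step. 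Combined with the identifications of the first two paragraphs, this yields the desired isomorphism $V(I_{<'})\cong Z_{O_{M,\mu}(\overline P,<')}$.
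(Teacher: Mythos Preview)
Your approach is correct and closely parallels the paper's, though organized differently. The paper also decomposes the marked polytope columnwise, writing $(n-1)\mu=\mu^1+\dots+\mu^{n-1}$ with $\mu^k$ the $k$th fundamental marking, and identifies the integer points of each $O_{M,\mu^k}(\overline P,<')$ with the vertices $v_a$ for $a\in\mM_{<'}$ having $k$ subscripts, exactly as you do via the description of $\iota=\phi$ from Proposition~\ref{gtposet}. The main difference is in the saturation step: the paper invokes the \emph{Minkowski sum property} of marked order polytopes \cite[Theorem~2.5]{FF} as a black box, obtaining at once that $O_{M,(n-1)\mu}(\overline P,<')$ is normal and that its lattice points are sums $\sum_k x^k$ with $x^k\in\Xi_k=O_{M,\mu^k}(\overline P,<')\cap\bZ^{\overline P}$. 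Your proposed greedy level-by-level extraction is precisely a direct proof of this special case of that theorem, so you are re-deriving the cited result rather than citing it. The paper then finishes geometrically, writing down explicit torus maps $\xi$ and $\xi'$ into $\bP(\wedge^1\bC^n\otimes\dots\otimes\wedge^{n-1}\bC^n)$ and $\bP_\mult$ respectively and checking that their image closures agree after the Segre embedding; your version is the algebraic dual of this, matching the multihomogeneous coordinate rings.

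One point to tighten: your second paragraph asserts that the Minkowski decomposition ``realizes $Z_{O_{M,\mu}(\overline P,<')}$ as a multi-projective toric variety whose multi-homogeneous coordinate ring is the semigroup algebra of $\widetilde C_{<'}\cap(\bZ^{\overline P}\oplus\bZ^{n-1})$''. This statement already contains most of the lemma: it presupposes both the saturation you postpone to the third paragraph and a passage from the normal toric variety $Z_Q$ of a non-integral polytope to a multi-Proj. The paper makes this passage explicit by dilating to $(n-1)O_{M,\mu}$, which is a normal lattice polytope by the Minkowski sum property, so that $Z_{O_{M,\mu}}\simeq\widehat Z_{(n-1)O_{M,\mu}}$ and the latter embeds in $\bP(\bC^\Xi)$ and thence into the tensor product projective space. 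You should make the analogous step explicit in your argument.
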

\begin{proof}
The variety $Z_{O_{M,\mu}(\overline P,<')}$ is isomorphic to $Z=Z_{O_{M,(n-1)\mu}(\overline P,<')}$, since \[O_{M,(n-1)\mu}(\overline P,<')=(n-1)O_{M,\mu}(\overline P,<').\] Furthermore, note that $(n-1)\mu=\mu^1+\dots+\mu^{n-1}$ where $\mu^k\in\bR^M$ is given by $\mu^k_{p_{r,r}}=1$ for $r\le k$ and $\mu^k_{p_{r,r}}=0$ otherwise. This implies that \[O_{M,(n-1)\mu}(\overline P,<')=O_{M,\mu^1}(\overline P,<')+\dots+O_{M,\mu^{n-1}}(\overline P,<').\]

We now make use of the \emph{Minkowski sum property} of marked order polytopes, see~\cite[Theorem 2.5]{FF}. By this property, for any $\la^1,\dots,\la^N\in\bZ^M$ the set of integer points in $O_{M,\la^1+\dots+\la^N}(\overline P,<')$ is the Minowski sum of the sets of integer points in the polytopes $O_{M,\la^i}(\overline P,<')$. This has two implications for us. First, by setting $\la^1=\dots=\la^N$ we see that every $O_{M,\la}(\overline P,<')$ with $\la\in\bZ^M$ is normal. In particular, $O_{M,(n-1)\mu}(\overline P,<')$ is normal which gives us an embedding of $Z$ into $\bP(\bC^\Xi)$ where $\Xi=O_{M,(n-1)\mu}(\overline P,<')\cap\bZ^{\overline P}$. Second, we see that $\Xi$ is equal to the Minkowski sum of the sets $\Xi_k=O_{M,\mu^k}(\overline P,<')\cap\bZ^{\overline P}$ where $k\in[1,n-1]$. This provides an embedding 
\begin{equation}\label{embedding}
\bP(\bC^\Xi)\hookrightarrow\bP(\bC^{\Xi_1}\otimes\dots\otimes\bC^{\Xi_{n-1}})
\end{equation}
as follows. Consider $n-1$ points $x_k\in\Xi_k$, let $e^k_{x_k}\in\bR^{\Xi_k}$ be the corresponding basis vectors and set $e=e^1_{x_1}\otimes\dots\otimes e^{n-1}_{x_{n-1}}$. Consider a point $y\in\bP(\bC^\Xi)$ with homogeneous coordinates $(y_x,x\in\Xi)$. Then the homogeneous coordinate of the image of $y$ under~\eqref{embedding} corresponding to the basis vector $e$ is equal to $y_{x_1+\dots+x_{n-1}}$.

Now recall that $\bR^P$ is embedded into $\bR^{\overline P}$ as the subspace of points $x$ with $x_{1,1}=1$ and $x_{n,n}=0$. We then obviously have the inclusions $O_{M,\mu^k}(\overline P,<')\subset O(P,<')$ for all $k$. We claim that under this inclusion the set of integer points $\Xi_k$ is identified with the set of those integer points in $O(P,<')$ which have the form $v_{a_{i_1,\dots,i_k}}$, i.e. for which the corresponding element in $\mM_{<'}\subset\mL$ has $k$ subscripts. Indeed, the description of the map $\iota$ given in the proof of Proposition~\ref{gtposet} shows that $\iota(a_{i_1,\dots,i_k})$ contains precisely those $p_{r,r}$ for which $r\le n-k$. This means that, with respect to the chosen inclusion $\bR^P\subset\bR^{\overline P}$, the point $v_{a_{i_1,\dots,i_k}}$ is contained in $\Xi_k$ and is not contained in any other $\Xi_l$. This gives us an inclusion of $\bC^{\Xi_k}\subset\wedge^k\bC^n$ where $e^k_{v_{a_{i_1,\dots,i_k}}}\in\bC^{\Xi_k}$ is identified with $e_{i_1}\wedge\dots\wedge e_{i_k}\in\wedge^k\bC^n$.

Combining with~\eqref{embedding} we obtain the chain of embeddings \[Z\hookrightarrow\bP(\bC^\Xi)\hookrightarrow\bP(\bC^{\Xi_1}\otimes\dots\otimes\bC^{\Xi_{n-1}})\hookrightarrow\bP(\wedge^1\bC^n\otimes\dots\otimes\wedge^{n-1}\bC^n).\] Note that, by defintion, the image of the embedding $Z\hookrightarrow\bP(\bC^\Xi)$ is the closure of the image of the torus $(\bC^*)^{\overline P}$ under the map sending $(t_{p_{r,s}},p_{r,s}\in \overline P)$ to the point in $\bP(\bC^\Xi)$ with homogeneous coordinate corresponding to $x\in\Xi$ equal to $\prod t_{p_{r,s}}^{x_{r,s}}$. Combining with the other two inclusions above we see that the image of the embedding of $Z$ into $\bP(\wedge^1\bC^n\otimes\dots\otimes\wedge^{n-1}\bC^n)$ is the image closure $\overline{\xi((\bC^*)^{\overline P})}$ of the following map $\xi$ on the torus. The homogeneous coordinate of $\xi(t_{p_{r,s}},p_{r,s}\in \overline P)$ corresponding to the basis vector \[(e_{i_1^1})\otimes\dots\otimes(e_{i_1^{n-1}}\wedge\dots\wedge e_{i_{n-1}^{n-1}})\] is equal to
\begin{enumerate}[label=(\roman*)]
\item 0 if we have $a_{i_1^k,\dots,i_k^k}\notin\mM_{<'}$ for some $k$, and otherwise
\item $\prod t_{p_{r,s}}^{x_{r,s}}$ where, in terms of the chosen embedding $\bR^P\subset\bR^{\overline P}$, \[x=\sum_{k=1}^{n-1} v_{a_{i_1^k,\dots,i_k^k}}\in\bR^{\overline P}.\]
\end{enumerate}

Having obtained the above realization of $Z$ we move on to the subvariety $Z'\subset\bP_\mult$ cut out by $I_{<'}$. We can express $I_{<'}$ as kernel of a map to a polynomial ring similarly to~\eqref{phiF}. Consider the polynomial ring $S=\bC[\{z_p,p\in P\},t]$. Then $I_{<'}$ is the kernel of the map $\varphi_{<'}$ from $R$ to $S$ with
\begin{equation}
\varphi_{<'}(X_a)=t\prod_{p\in\iota(a)}z_p
\end{equation}
if $a\in\mM_{<'}$ and $\varphi_{<'}(X_a)=0$ otherwise. This means that $Z'$ is the image closure $\overline{\xi'((\bC^*)^P)}\subset\bP_\mult$ where the homogeneous coordinate of $\xi'(t_{p_{r,s}},p_{r,s}\in P)$ corresponding to the basis vector $e_{i_1}\wedge\dots\wedge e_{i_k}$ is equal to $\prod_{p\in\iota(a_{i_1,\dots,i_k})}t_p$ if $a_{i_1,\dots,i_k}\in\mM_{<'}$ and is 0 otherwise. 

Now consider the Segre embedding \[\mathcal S:\bP_\mult\hookrightarrow\bP(\wedge^1\bC^n\otimes\dots\otimes\wedge^{n-1}\bC^n).\] Let $\rho:(\bC^*)^{\overline P}\to (\bC^*)^P$ be the surjective map taking $(t_{p_{r,s}},p_{r,s}\in \overline P)$ to $(t_{p_{r,s}},p_{r,s}\in P)$. One sees that the maps $\mathcal S\xi'\rho$ and $\xi$ from $(\bC^*)^{\overline P}$ to $\bP(\wedge^1\bC^n\otimes\dots\otimes\wedge^{n-1}\bC^n)$ coincide. However, the image closure of the first map is $Z'$ while the image closure of the second is $Z$.
\end{proof}

An important special case of the above lemma is $<'=<$. First we point out that when $\la\in\bR^M$ is such that $\la_{p_{1,1}}>\dots>\la_{p_{n,n}}$, it can be viewed as a strictly dominant $GL_n$ weight. The marked order polytope $O_{M,\la}(\overline P,<)$ is known as the \emph{Gelfand--Tsetlin polytope} (\cite{GT}) corresponding to this strictly dominant weight. All such polytopes are pairwise strongly combinatorially equivalent. Note that $I_<=I^h$, so by applying Lemma~\ref{multitoric} we obtain the following fact which is a well known result due to Kogan and Miller.
\begin{theorem}[\cite{KM}]\label{koganmiller}
The variety $H_\mult$ is isomorphic to the toric variety associated with the Gelfand--Tsetlin polytope $O_{M,\mu}(\overline P,<)$ and, therefore, with any other Gelfand--Tsetlin polytope corresponding to a strictly dominant weight.
\end{theorem}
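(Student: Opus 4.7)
The proof is essentially a direct specialization of Lemma~\ref{multitoric} to the case $<' = <$. The plan proceeds in three short steps. First, I would verify that when $<' = <$ one has $\mathcal{J}(P,<') = \mathcal{J}(P,<)$ and hence $\mM_{<'} = \mL$, so there are no ``extra'' variables $X_a$ with $a \notin \mM_{<'}$ to adjoin; consequently $I_{<'} = I^h(\mM_{<'}) = I^h$. Therefore the subvariety cut out by $I_<$ in $\bP_\mult$ is exactly $H_\mult$.

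Second, applying Lemma~\ref{multitoric} with $<' = <$ immediately gives an isomorphism $H_\mult \cong Z_{O_{M,\mu}(\overline P,\, <)}$, where $\mu \in \bR^M$ is given by $\mu_{p_{r,r}} = \frac{n-r}{n-1}$. As spelled out in the paragraph preceding Lemma~\ref{multitoric}, the polytope $O_{M,\mu}(\overline P,\, <)$ is by definition the marked order polytope of $(\overline P, <)$ with marks at $M$, and since $\mu_{p_{1,1}} > \mu_{p_{2,2}} > \dots > \mu_{p_{n,n}}$, this is a Gelfand--Tsetlin polytope corresponding to a strictly dominant weight. This establishes the first part of the theorem.

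For the ``therefore'' clause, I would invoke the fact that all Gelfand--Tsetlin polytopes $O_{M,\lambda}(\overline P,\, <)$ with strictly dominant $\lambda$ share the same normal fan: their facets are cut out by a common set of inequalities $x_p \ge x_q$ (one for each covering pair in $(\overline P, <)$), and varying $\lambda$ within the strictly dominant chamber merely translates these facets without altering their combinatorial adjacencies. Since the normal toric variety associated with a lattice polytope depends only on its normal fan, all such Gelfand--Tsetlin polytopes yield canonically isomorphic toric varieties, and the ``therefore'' statement follows.

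The only subtlety is the claim that strict dominance of the weight preserves the combinatorial type of the polytope, i.e.\ that no facet-defining inequality becomes redundant along the family. This is standard and well-documented in the marked order polytope literature (see e.g.~\cite{ABS}), so I would simply cite it rather than reprove it.
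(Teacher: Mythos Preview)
Your proposal is correct and follows essentially the same approach as the paper. The paper's argument is contained in the paragraph immediately preceding the theorem (there is no separate proof environment): it notes that $I_< = I^h$, applies Lemma~\ref{multitoric}, and invokes the strong combinatorial equivalence of Gelfand--Tsetlin polytopes for strictly dominant weights---exactly your three steps.
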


\begin{remark}
In fact, the polytopes $O_{M,\mu^k}(\overline P,<)$ are also Gelfand--Tsetlin polytopes: those corresponding to the fundamental weights $(1,\dots,1,0,\dots,0)$. These Gelfand--Tsetlin polytopes are easily identified with the order polytopes appearing in the Grassmannian case considered in the beginning of this section.
\end{remark}

Now let $V_{GT}$ denote the set of vertices of the Gelfand--Tsetlin polytope $O_{M,\mu}(\overline P,<\nobreak)$. Since $O_{M,\mu}(\overline P,<)$ is the Minkowski sum of the polytopes $O_{M,\frac{\mu^k}{n-1}}(\overline P,<)$, every $v\in V_{GT}$ is uniquely expressed as $v^1+\dots+v^{n-1}$ with $v^k$ a vertex $O_{M,\frac{\mu^k}{n-1}}(\overline P,<)$. However, in the proof of Lemma~\ref{multitoric} we have obtained a description of the vertices of $O_{M,\mu^k}(\overline P,<)$. Scaling by a factor of $\frac1{n-1}$ we see that the vertices of $O_{M,\frac{\mu^k}{n-1}}(\overline P,<)$ are the $n\choose k$ points $\frac{v_{a_{i_1,\dots,i_k}}}{n-1}$ (under the chosen embedding $\bR^P\subset\bR^{\overline P}$). Thus, given a point $w\in\bR^\mL$ we can define a point $c(w)\in\bR^{V_{GT}}$ by letting the coordinate $c(w)_v$ equal \[w_{a_{i_1^1}}+\dots+w_{a_{i_1^{n-1},\dots,i_{n-1}^{n-1}}}\] where \[v=\frac{v_{a_{i_1^1}}+\dots+v_{a_{i_1^{n-1},\dots,i_{n-1}^{n-1}}}}{n-1}.\] We now prove the analog of Theorem~\ref{main}.

\begin{theorem}\label{mainflags}
For any face $F$ of $K$ the following hold.
\begin{enumerate}[label=(\alph*)]
\item Let $w$ be a point in the relative interior of $F$. The regular subdivision $\Theta_{O_{M,\mu}(\overline P,<)}(c(w))$ of the Gelfand-Tsetlin polytope consists of the polytopes $O_{M,\mu}(\overline P,<_i^F)$ where $i\in[1,m(F)]$ and the orders $<_i^F$ are as in Theorem~\ref{main}.
\item The variety $Y^F_\mult$ is semitoric. It has $m(F)$ irreducible components that are isomorphic to the toric varietes associated with the parts of the subdivision $\Theta_{O_{M,\mu}(\overline P,<)}(c(w))$.
\end{enumerate}
\end{theorem}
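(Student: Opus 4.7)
My plan is to transfer the results of Theorem~\ref{main} to the multiprojective/marked setting by slicing the order polytope $O(P,<)$ with the affine subspace that realizes the Gelfand--Tsetlin polytope, and then matching the resulting subvarieties in $\bP_\mult$ with the pieces of the sliced subdivision via Lemma~\ref{multitoric}.

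For part~(a), I begin with the convex piecewise-linear function $f$ on $O(P,<)$ produced in the proof of Theorem~\ref{main}: $f(v_a)=w_a$ for all $a\in\mL$ and $f$ is linear on each $O(P,<_i^F)$. Under the embedding $\bR^P\subset\bR^{\overline P}$ the Gelfand--Tsetlin polytope $O_{M,\mu}(\overline P,<)$ is identified with the slice $O_\mult(P,<)$, so the restriction $g=f|_{O_\mult(P,<)}$ is convex piecewise-linear, with linear pieces $O_\mult(P,<_i^F)=O_{M,\mu}(\overline P,<_i^F)$. Each piece is $\binom{n}{2}$-dimensional in the GT polytope because it contains the marked slice of any simplex $O(P,\prec)$ for a linearization $\prec$ of $<_i^F$, and that slice is itself a $\binom{n}{2}$-dimensional GT-type polytope. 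To compute $g$ at a vertex $v\in V_{GT}$, I use the Minkowski sum property for integer points in marked order polytopes (as exploited in the proof of Lemma~\ref{multitoric}): picking any $i$ with $v\in O_{M,\mu}(\overline P,<_i^F)$, the identity $(n-1)O_{M,\mu}(\overline P,<_i^F)=\sum_{k=1}^{n-1}O_{M,\mu^k}(\overline P,<_i^F)$ lets me write $v=\tfrac1{n-1}\sum_k v_{a_k}$ with each $a_k\in\mM_{<_i^F}$. Linearity of $f$ on $O(P,<_i^F)$ then gives $g(v)=\tfrac1{n-1}\sum_k w_{a_k}$, i.e.\ $c(w)_v=(n-1)g(v)$; in particular, $c(w)_v$ does not depend on the choice of decomposition and is a well-defined function on $V_{GT}$. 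Since a positive rescaling of vertex heights does not affect the induced regular subdivision, $\Theta_{O_{M,\mu}(\overline P,<)}(c(w))$ coincides with the subdivision obtained from the domains of linearity of $g$, proving part~(a).

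For part~(b), I use the fact, recalled in the course of the proof of Theorem~\ref{main}, that $I^F$ equals the intersection $\bigcap_{i=1}^{m(F)}I_{<_i^F}$ (the ideals $I_i$ of Section~\ref{semitoric} coincide with the $I_{<_i^F}$ defined in this section). Therefore $Y^F_\mult$ is the union in $\bP_\mult$ of the subvarieties cut out by the individual ideals $I_{<_i^F}$, and by Lemma~\ref{multitoric} each such subvariety is isomorphic to the toric variety $Z_{O_{M,\mu}(\overline P,<_i^F)}$ associated with the corresponding piece of the subdivision from part~(a). Each of these toric varieties is irreducible of dimension $\binom{n}{2}=\dim\mathcal F_n=\dim Y^F_\mult$, so they are precisely the irreducible components of $Y^F_\mult$ and $Y^F_\mult$ is semitoric.

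The step I expect to require the most care is verifying that the pieces $O_{M,\mu}(\overline P,<_i^F)$ remain distinct after the slicing rather than accidentally fusing into larger domains of linearity. The boundary between two adjacent pieces $O(P,<_i^F)$ and $O(P,<_j^F)$ inside $O(P,<)$ lies on a hyperplane of the form $\{x_p=x_q\}$ with $p,q\in P$ incomparable in $<$, and strict convexity of $f$ across this boundary follows from~\eqref{ineq} applied to $w$ in the relative interior of $F$. For the boundary to survive after slicing by $\{x_{p_{r,r}}=\tfrac{n-r}{n-1}\,:\,r\in[2,n-1]\}$ one needs $\{p,q\}$ to not lie entirely in the diagonal chain $\{p_{2,2},\dots,p_{n-1,n-1}\}$, which is automatic because those diagonal elements form a chain in $(P,<)$ and hence cannot contain an incomparable pair.
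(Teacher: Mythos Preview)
Your proof is correct and follows essentially the same architecture as the paper's: restrict the convex piecewise-linear function $f$ from Theorem~\ref{main} to the Gelfand--Tsetlin slice, match its vertex values with $c(w)$, and invoke Lemma~\ref{multitoric} for part~(b). The one substantive difference is in how you evaluate $f$ at a GT vertex $v$. The paper argues directly from the \emph{unique} Minkowski decomposition of $v$ with respect to $<$ that the summands $a_{i_1^k,\dots,i_k^k}$ form a chain in $\mL$ (because each coordinate of $v$ equals some $\tfrac{n-t}{n-1}$, so the level sets are nested order ideals), then places them all in a single simplex $O(P,\prec)$ on which $f$ is linear. You instead pick an $i$ with $v\in O_{M,\mu}(\overline P,<_i^F)$ and use the Minkowski sum property for the stronger order $<_i^F$, relying on linearity of $f$ on the larger piece $O(P,<_i^F)$; you then implicitly use uniqueness of the decomposition with respect to $<$ to identify your sum $\sum_k w_{a_k}$ with $c(w)_v$. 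Both routes are valid; the paper's is slightly more self-contained since it never leaves the original order $<$, while yours avoids descending to the simplex level.

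Your final paragraph on ``no fusing'' is more careful than the paper, which simply asserts that the $O_{M,\mu}(\overline P,<_i^F)$ are the domains of linearity of $f|_{O_\mult(P,<)}$. Your observation that the bending hyperplanes $\{x_p=x_q\}$ have $p,q$ incomparable in $<$, hence not both diagonal, is exactly the transversality needed to ensure the gradient jump of $f$ survives restriction to the slice.
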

\begin{proof}
Note that the subdivision of $O_{M,\mu}(\overline P,<)$ into the polytopes $O_{M,\mu}(\overline P,<_i^F)$ is the section of the subdivision of $O(P,<)$ into the $O(P,<_i^F)$ by the subspace of points $x$ with $x_{r,r}=\frac{n-r}{n-1}$ for $r\in[1,n]$. Consider the function $f$ on $O(P,<)$ which is linear on each $O(P,<_i^F)$ and satisfies $f(v_a)=w_a$ for all $a$. The restriction of $f$ to $O_{M,\mu}(\overline P,<)$ is a convex piecewise linear function with domains of linearity $O_{M,\mu}(\overline P,<_i^F)$. To prove part (a) we show that for every vertex $v\in V_{GT}$ we have $f(v)=\frac{c(w)_v}{n-1}$ (scaling by $\frac1{n-1}$ obviously preserves the subdivision). 

Indeed, consider $v\in V_{GT}$ with \[v=\frac{v_{a_{i_1^1}}+\dots+v_{a_{i_1^{n-1},\dots,i_{n-1}^{n-1}}}}{n-1}.\] The fact that $v$ is a vertex is seen to imply that every coordinate $v_{r,s}$ is equal to some $v_{t,t}=\frac{n-t}{n-1}$ (see, for example,~\cite[Proposition 2.1]{M0}). The set $J_r\subset P$ of $p$ such that $v_p\ge\frac{n-r}{n-1}$ is an order ideal in $P$. We see that \[v=\frac{v_{\iota^{-1}(J_1)}+\dots+v_{\iota^{-1}(J_{n-1})}}{n-1}\] which shows that $a_{i_1^k,\dots,i_k^k}=\iota^{-1}(J_k)$ for all $k$. However, $J_1\subset\dots\subset J_{n-1}$ and, consequently $a_{i_1^1}<\dots<a_{i_1^{n-1},\dots,i_{n-1}^{n-1}}$. The chain $a_{i_1^1},\dots,a_{i_1^{n-1},\dots,i_{n-1}^{n-1}}$ can be extended to a maximal chain $C$ in $\mL$, let $C=C_\prec$ for a linearization $\prec$ of $<$. We see that the simplex $O(P,\prec)$ contains the points $v_{a_{i_1^1}},\dots,v_{a_{i_1^{n-1},\dots,i_{n-1}^{n-1}}}$ as well as their centroid $v$. However, since $f$ is linear on $O(P,\prec)$, we obtain \[f(v)=\frac{f(v_{a_{i_1^1}})+\dots+f\Big(v_{a_{i_1^{n-1},\dots,i_{n-1}^{n-1}}}\Big)}{n-1}=\frac{c(w)_v}{n-1}.\]

To prove part (b) note that, by part (b) of Theorem~\ref{main}, the ideals $I_{<_i^F}$ are the primary components of $I^F$. Therefore, the irreducible components of $Y_\mult^F$ are the zero sets of the $I_{<_i^F}$ in $\bP_\mult$. These are the toric varieties associated with the polytopes $O_{M,\mu}(\overline P,<_i^F)$ by Lemma~\ref{multitoric}.
\end{proof}

\begin{example}
Consider the case $F=K$. The orders $<_i^F$ are then the linearizations of $<$, choose an $i\in[1,m(F)]$. For $k\in[1,n-1]$ let $d_k$ be the number of elements $p\in\overline P$ satisfying $p_{k,k}<_i^F p<_i^F p_{k+1,k+1}$. One then sees that $O_{M,\mu}(\overline P,<_i^F)$ is the product of simplices of dimensions $d_1,\dots,d_{n-1}$, note that $\sum d_k={n\choose 2}$. The above theorem now implies that every irreducible component of the flat degeneration $Y_F^\mult$ is a $n\choose 2$-dimensional product of projective spaces of dimensions $d_1,\dots,d_{n-1}$. 

For instance, by looking at~\eqref{n=4} one may check that there is a total of 12 linearizations when $n=4$. As a result, $Y^F_\mult$ has 12 irreducible components: 8 of the form $\bP^3\times\bP^2\times\bP^1$, 2 of the form $\bP^2\times\bP^2\times\bP^2$ and 2 of the form $\bP^4\times\bP^1\times\bP^1$. It would be interesting to describe the components when $F=K$ for an arbitrary $n$.
\end{example}

To avoid overloading the paper we will not prove an analog of Theorem~\ref{main2} for this setting. Instead we outline the construction omitting the details. For every face $F$ of $K$ a polytope $\Pi^F_\mult\subset U(I^F)^*$ can be defined as the convex hull of all centroids of the form \[\frac{\la_{a_{i_1^1}}+\dots+\la_{a_{i_1^{n-1},\dots,i_{n-1}^{n-1}}}}{n-1}.\] Similarly to $\Pi^F$, this polytope can be interpreted as a weight diagram (scaled by a factor of $\frac1{n-1}$) of a certain homogeneous component of the ring $R/I^F$. 

One sees that for faces $F\subset G$ one has $\pi^F_G(\Pi^G_\mult)=\Pi^F_\mult$, in particular, the map $\zeta^{-1}\pi^F_{F_0}$ is a projection of $\Pi^F_\mult$ onto the Gelfand--Tsetlin polytope $O_\mult(P,<)$. One also deduces that $\Pi^F_\mult$ is, in fact, a codimension $n-2$ section of $\Pi^F$. It is then easily shown that the faces $\Phi_i^F\cap\Pi_\mult^F$ of $\Pi_\mult^F$ are mapped bijectively by $\zeta^{-1}\pi^F_{F_0}$ onto the parts $O_\mult(P,<_i^F)$ of the subdivision of the Gelfand--Tsetlin polytope. This provides an analog of part (a) of Theorem~\ref{main2}. 

An analog of part (b) would state that $Y^F_\mult$ can be embedded into $Z_{\Pi^F_\mult}$ as the union of the orbit closures corresponding to the faces $\Phi_i^F\cap\Pi_\mult^F$. Proving this claim is more involved and requires combining the methods used in the proof of Theorem~\ref{main2} with those used in the proof of Lemma~\ref{multitoric}.

\begin{remark}
One may check that the results obtained in this section can be straightforwardly adapted to a more general multiprojective setting of a marked poset in which the marked elements compose a linearly ordered subset. It would be interesting to find further generalizations that would provide semitoric degenerations of toric varieties associated with arbitrary marked order polytopes.
\end{remark}

\begin{remark}
To expand on the previous remark we point out that there exist several generalizations of order polytopes and marked order polytopes. The most general construction is, likely, that of \emph{marked poset polytopes} found in~\cite{FFLP}. It would be interesting to generalize the above results to produce semitoric degenerations of toric varieties associated with polytopes from this broad family. A special case of particular interest is that of Feigin--Fourier--Littelmann--Vinberg polytopes (\cite{FFL1}), since the corresponding toric varieties are also flat degenerations of flag varieties (\cite{favourable}).
\end{remark}

\end{document}